\renewcommand{\subsection}{%
	\@startsection{subsection}
	{2}
	{\z@}
	{-21dd plus-8pt minus-4pt}
	{10.5dd}
	{\normalsize\bfseries\boldmath}%
}
\newtheorem{theorem}{Theorem}[section]
\newtheorem{thm}{Theorem}
\newtheorem{prop}[thm]{Proposition}
\theoremstyle{definition}
\newtheorem{definition}[theorem]{Definition}
\newtheorem{rmk}{Remark}
\newcommand{\be}{\begin{equation}}
\newcommand{\ee}{\end{equation}}
\newcommand{\bsubeq}{\begin{subequations}}
	\newcommand{\esubeq}{\end{subequations}}
\renewcommand{\div}{\text{div}}
\newcommand{\ds}{\displaystyle}
\newcommand{\calL}{{\mathcal{L}}}
\newcommand{\calF}{{\mathcal{F}}}
\newcommand{\calB}{{\mathcal{B}}}
\newcommand{\calD}{{\mathcal{D}}}
\newcommand{\calA}{{\mathcal{A}}}
\newcommand{\calC}{{\mathcal{C}}}
\newcommand{\BA}{\mathbb{A}}
\newcommand{\BR}{\mathbb{R}}
\newcommand{\wti}{\widetilde}
\newcommand{\what}{\widehat}
\newcommand{\bpm}{\begin{pmatrix}}
	\newcommand{\epm}{\end{pmatrix}}
\newcommand{\bbm}{\begin{bmatrix}}
	\newcommand{\ebm}{\end{bmatrix}}
\numberwithin{equation}{section}
\numberwithin{thm}{section}
\numberwithin{rmk}{section}
\newtheorem{clr}[thm]{Corollary}
\newcommand{\lso}{\bs{L}^{q}_{\sigma}(\Omega)}
\newcommand{\lqo}{L^{q}(\Omega)}
\newcommand{\lo}[1]{\bs{L}^{#1}_{\sigma}(\Omega)}
\newcommand{\Bso}{\bs{B}^{2-\rfrac{2}{p}}_{q,p}(\Omega)}
\newcommand{\Bto}{\widetilde{\bs{B}}^{2-\rfrac{2}{p}}_{q,p}(\Omega)}
\newcommand{\xipqs}{\bs{X}^{\infty}_{p,q,\sigma}}
\newcommand\rfrac[2]{{}^{#1}\!/_{#2}}
\newcommand{\lqaq}{\big( \lso, \calD(A_q) \big)_{1-\frac{1}{p},p}}
\newcommand{\lqafq}{\big( \lso, \calD(\BA_{_{F,q}}) \big)_{1-\frac{1}{p},p}}
\newcommand{\norm}[1]{\left\lVert#1\right\rVert}
\newcommand{\abs}[1]{\left\lvert#1\right\rvert}
\newcommand{\lplqs}{L^p \big( 0,\infty; \lso \big)}
\newcommand{\ip}[2]{\left\langle #1, #2 \right\rangle}
\newcommand{\bs}[1]{\boldsymbol{#1}}
\newcommand{\bls}{\bs{L}^q_{\sigma}(\Omega)}
\newcommand{\nin}{\noindent}
\newcommand{\thistheoremname}{}
\newcommand{\Hi}{(\textbf{H.1})}
\newcommand{\Hii}{(\textbf{H.2})}
\newcommand{\Hiii}{(\textbf{H.3})}
\newcommand{\Hiv}{(\textbf{H.4})}
\newcommand{\Hv}{(\textbf{H.5})}
\newtheorem*{genericthm*}{\thistheoremname}
\newenvironment{namedthm*}[1]
{\renewcommand{\thistheoremname}{#1}%
	\begin{genericthm*}}
	{\end{genericthm*}}
\begin{document}
	\title{Maximal $L^p$-regularity for an abstract evolution equation with applications to closed-loop boundary feedback control problems}
	\author[1,2]{Irena Lasiecka}
	\author[3]{Buddhika Priyasad}
	\author[1]{Roberto Triggiani}
	\affil[1]{Department of Mathematical Sciences, University of Memphis, Memphis, TN 38152 USA}
	\affil[2]{IBS, Polish Academy of Sciences, Warsaw, Poland}
	\affil[3]{621 Institute for Mathematics and Scientific Computing, 8010 Graz, Heinrichstra\ss e 36, Austria}
	\setcounter{Maxaffil}{0}
	\renewcommand\Affilfont{\itshape\small}
	\date{}
	\maketitle

	\begin{abstract}
	\noindent In this paper we present an abstract maximal  $L^p$-regularity result up to $T = \infty$, that is tuned to capture (linear) Partial Differential Equations of parabolic type, defined on a bounded domain and subject to finite dimensional, stabilizing, feedback controls acting on (a portion of) the boundary. Illustrations include, beside a more classical	boundary parabolic example, two more recent settings: (i) the $3d$-Navier-Stokes equations with finite dimensional, localized, boundary tangential feedback stabilizing controls as well as Boussinesq systems with finite dimensional, localized, feedback, stabilizing, Dirichlet	boundary control for the thermal equation.
	\end{abstract}
	
	
	\section{Introduction and statement of main result.}\label{Sec-1}
	
	Though written at the outset at the abstract level for an abstract linear model, the present paper is actually motivated by, and ultimately directed to, nonlinear Partial Differential Equations (PDEs) of parabolic type, defined on a bounded domain and subject to finite dimensional, stabilizing, \textit{feedback controls acting on (a portion of) the boundary}. A key preliminary goal is to establish uniform stabilization of the corresponding \textit{linearized boundary-based}, feedback, closed-loop problem. The extra property to be established is that such boundary-based feedback linearized system possesses the maximal  $L^p$-regularity property up to $T = \infty$ in the natural functional setting, where uniform stabilization is achieved. Maximal  $L^p$-regularity up to $T = \infty$ is then critically used to provide a novel, much streamlined, improved treatment of the consequent nonlinear analysis of well-posedness and uniform stabilization of the nonlinear parabolic problem in the vicinity of an unstable equilibrium solution. See \cite{LPT.2} for the Navier-Stokes equations and \cite{LPT.4} for the Boussinesq system, to be compared to prior treatments such as in \cite{B:2011}, \cite{B:2018}, \cite{BT:2004}, \cite{BLT1:2006}, \cite{BLT2:2007}, \cite{BLT3:2006}, \cite{LT2:2015}. Our driving motivating illustration is the $3d$ Navier-Stokes equations of Section \ref{Sec-5}. Here, the functional setting where uniform stabilization with a finite dimensional, localized, boundary, even tangential, control is achieved in full generality cannot be a Hilbert-Sobolev space. In fact, in studying local well-posedness and uniform stabilization near an unstable equilibrium solution, handling the N-S nonlinearity requires a sufficiently high topological level as to impose compatibility conditions between the initial conditions and the boundary-based control. In truth, whether it was possible at all to achieve uniform stabilization of the Navier-Stokes equations in the vicinity of an unstable equilibrium solution by virtue of a localized boundary-based feedback control that is finite dimensional also for $d = 3$ was an open problem that was solved in the affirmative in the recent paper \cite{LPT.2}. It required a suitable Besov space setting, with tight indices, based on $L^q(\Omega)$, $q > d$ and `close' to $L^3(\Omega)$, which possesses two features: (i) a topological level high enough to be able to handle the $3d$ non-linearity; (ii) without recognizing compatibility conditions. Such Besov setting then replaces the Hilbert-Sobolev setting that was traditionally used in the literature on parabolic stabilization of fluids over many years. Stabilization of the Navier-Stokes equations was pioneered by A. Fursikov \cite{F.1}, \cite{F.2}, \cite{F.3}.
	
	While it has been known for many years that analyticity of the s.c. semigroup and maximal regularity are equivalent properties in the Hilbert setting \cite{DS:1964}, in the Banach setting maximal regularity implies, but need not be implied by, maximal regularity \cite{Dore:2000}, \cite{KW:2004}.
	
	Maximal regularity at the abstract functional analytic level, as well as maximal regularity of (linear) parabolic problems on bounded (or even unbounded) domains is of course a much worked out topic over many years; however, in the latter case of a bounded domain, typically with \uline{homogeneous} boundary conditions. There is a vast literature on this topic, that covers non only classical parabolic operators but also Navier-Stokes-based operators such as the Stokes operator, see \cite{DV:1991}, \cite{Dore:1991}, \cite{Dore:2000}, \cite{DS:1964}, \cite{DPV:2002}, \cite{DPG:1975}, \cite{DPG:1984}, \cite{Gi:1981}, \cite{G:1969}, \cite{VAS:1968}, \cite{KW:2001}, \cite{KW:2004}, \cite{V:1985}, \cite{We:2001}, \cite{PS:2016}, to name a few.
	
	In this paper, our focus is instead on linear parabolic problems with \textit{boundary-based} stabilizing feedback control of finite dimension.
	
	Our abstract maximal $L^p$-regularity theorem up to $T = \infty$ of Section \ref{Sec-1} is tuned to capture the $3d$ Navier-Stokes linearized illustration of Section \ref{Sec-5}, mentioned above. In fact, there are critical genuine intrinsic properties pertinent to such Navier-Stokes-illustration in the $L^q(\Omega)$-setting, $q > d$ that are extracted  and elevated to become abstract assumptions of the theorem of Section \ref{Sec-1}.
	
	To ease the transition on the applications, we provide in Section \ref{Sec-4} a more classical illustration of parabolic boundary stabilization, that was studied in the Hilbert setting $L^2(\Omega)$ in the early 80s \cite{LT:1983.1}, \cite{LT:1983.2}, \cite{RT:1975}, \cite{RT:1980}, \cite{RT:1980:2} and which is here re-presented in the $L^q$ setting, to conclude now with maximal $L^p$-regularity up to $T = \infty$ in the stabilized case. Section \ref{Sec-6} then goes on by providing an additional illustration of our abstract theorem of Section \ref{Sec-1}, as applied to a linearized Boussinesq system, coupling the Navier-Stokes equations with a thermal equation, where a Dirichlet boundary stabilizing finite dimensional feedback localized control acts on the heat component \cite{LPT.4}. Again, ultimately, well-posedness and uniform stabilization is achieved in a Besov space setting, by virtue of the asserted maximal $L^p$-regularity of the linearized system.	
	
	\subsection{Standing assumptions.}
	
	\nin We introduce the following assumptions.\\
	
	\nin (\textbf{H.1}) Let $Y$ be a Banach space which, moreover, is a UMD-space \cite[p 75]{KW:2004}; hence reflexive \cite[Theorem 4.3.3, p306]{HNVW:2016}.\\
	
	\nin (\textbf{H.2}) Let $-A: Y \supset \calD(A) \longrightarrow Y$ be the generator of a s.c. bounded analytic semigroup $\ds e^{-At}$ on $Y$, $t \geq 0$. Accordingly, the fractional powers $A^{\theta}, 0 < \theta < 1$, of $A$ are well-defined, possibly after a translation.\\
	
	\nin (\textbf{H.3}) Let $-A^* \supset Y^* \supset \calD(A^*) \longrightarrow Y^*$ have maximal $L^p$-regularity on $Y^*$ up to $T$: $\ds -A^* \in MReg \left( L^p(0,T; Y^*) \right)$ (so that, a-fortiori, $-A^*$ is the generator of a s.c. bounded analytic semigroup $\ds e^{-A^*t}$ on $Y^*$; as implied by \Hii \ via the reflexivity of $Y$ in \Hi).\\ 
	
	\nin (\textbf{H.4}) Let $U$ be another Banach space and let $G$ be a (``Green") operator satisfying
	\begin{equation}\label{1.1}
		G: \text{continuous } U \longrightarrow \calD(A^{\gamma}) \subset Y, \text{ or } A^{\gamma}G \in \calL(U,Y)
	\end{equation}
	for some constant $\gamma,\ 0 < \gamma < 1$.\\
	
	\nin (\textbf{H.5}) Consider the following three operators $\ds A_o, \calA, F$
	\begin{subequations}\label{1.2}
		\begin{eqnarray}
			A_o: Y \supset \calD(A_o) = \calD(A^{1-\varepsilon}) \longrightarrow Y, \ \varepsilon > 0, \label{1.2a}\\
			\text{with } Y^* \supset \calD(A_o^*) = \calD \big( A^{*^{1-\varepsilon}} \big) \longrightarrow Y^* \label{1.2b}
		\end{eqnarray}
	\end{subequations}
	so that, by the closed graph theorem, $\ds A_o A^{-(1-\varepsilon)} \in \calL(Y)$ is a bounded operator on $Y$ and $\ds A^*_o A^{*^{-(1-\varepsilon)}} \in \calL(Y^*)$ is a bounded operator on $Y^*$;
	\begin{eqnarray}
	\calA = -A + A_o: \ Y \supset \calD(\calA) = \calD(A) \longrightarrow Y; \label{1.3}\\
	F \in \calL(Y,U), \ F \text{ (stands for ``feedback")}. \label{1.4}
	\end{eqnarray}
	
	\nin Since the perturbation $A_o$ of $-A$ in \eqref{1.3} is $\ds A^{1-\varepsilon}$-bounded and $-A$ is a s.c. analytic semigroup generator, it follows \cite[Corollary 2.4, p 81]{P:1983} that
	
	\begin{equation}\label{1.5}
		\calA \text{ is the generator of a strongly continuous analytic semigroup } e^{\calA t} \text{ on } Y, t > 0.
	\end{equation}
	The focus of our main interest in this paper is the operator 
	\begin{subequations}\label{1.6}
		\begin{align}
			A_{_F} &= \calA (I - GF): Y \supset \calD(A_F) \longrightarrow Y \label{1.6a}\\
			\begin{picture}(0,0)
			\put(-15,10){$\left\{\rule{0pt}{20pt}\right.$}\end{picture}
			\calD(A_{_F}) &= \left \{ x \in Y: \ (I - GF)x \in \calD(\calA) \right \}. \label{1.6b}
		\end{align}
	\end{subequations}
	\begin{rmk}\label{rmk-1.1}
		\begin{enumerate}[a)]
			\item 
			We quote from \cite[p 75]{KW:2004}: ``\textit{All subspaces and quotient spaces of $L^q(\Omega),\ 1 < q < \infty$ have the UMD property \textit{but $L^1(\Omega)$} or spaces of continuous functions $C(K)$ do not. As a rule of thumb, we can say that Sobolev spaces, Hardy spaces and other well-known spaces of analysis are UMD if they are reflexive}".
			
			\item In applications to PDE closed-loop systems, $\Omega$ is an open bounded domain in $\BR^d$ with sufficiently smooth boundary $\Gamma = \partial \Omega$, while the feedback control acts on the boundary $\Gamma = \partial \Omega$ of $\Omega$. Then the space $U$ will be based on $\Gamma$, say $U = L^q(\Gamma)$, possibly subject to further conditions. The operator $\calA$ has compact resolvent.
		\end{enumerate}
	\end{rmk}

	\subsection{The dynamical model generated by $A_{_F}$: Main results.}
	
	\begin{prop}\label{prop-1.1}
		Under the given assumptions \Hi, \Hii, \Hiv, \Hv \ the operator $A_F$ in \eqref{1.6} generates a s.c. analytic semigroup $\ds e^{A_{_F}t}$ on $Y, \ t \geq 0$. If moreover $\calA$ has compact resolvent, then likewise the resolvent $R(\lambda, A_{_F})$ is compact on $Y$ and so the semigroup $\ds e^{A_F t}$ is compact as well for all $t > 0$. \cite[Thm 3.3, p 48]{P:1983}
	\end{prop}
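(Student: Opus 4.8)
The plan is to exhibit, for $\lambda$ in a suitable sector $\Sigma$, the resolvent $R(\lambda, A_{_F}) = (\lambda I - A_{_F})^{-1}$ as a bounded operator on $Y$ satisfying the sectorial bound $\norm{R(\lambda, A_{_F})}_{\calL(Y)} \le M/|\lambda|$, and then to invoke the generation theorem for analytic semigroups. The starting point is that, by \eqref{1.5}, $\calA$ already generates a s.c. analytic semigroup, so (after a harmless translation making $0 \in \rho(\calA)$) its resolvent $R(\lambda,\calA)$ exists on a sector and obeys $\norm{R(\lambda,\calA)} \le M/|\lambda|$. First I would reduce the resolvent equation $(\lambda I - A_{_F})x = f$ to one governed by $\calA$: writing $w = (I - GF)x \in \calD(\calA)$ as dictated by \eqref{1.6b}, the equation reads $\lambda x - \calA w = f$, whence $(\lambda - \calA)w = f - \lambda GF x$ and $w = R(\lambda,\calA)(f - \lambda GF x)$. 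Substituting into $x = w + GF x$ and using the identity $\lambda R(\lambda,\calA) - I = \calA R(\lambda,\calA)$ yields the fixed-point relation
\begin{equation*}
	\big[ I + \calA R(\lambda,\calA)\, GF \big] x = R(\lambda,\calA) f .
\end{equation*}
Thus the whole matter rests on inverting the operator $I + \calA R(\lambda,\calA)\,GF$ on $Y$.

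The key step is to show that the perturbation $\calA R(\lambda,\calA)\,GF$ is small in $\calL(Y)$ for large $|\lambda|$, which is exactly where the smoothing hypothesis \Hiv \ enters. Using \eqref{1.1} I would factor $GF = A^{-\gamma}\big(A^\gamma GF\big)$ with $A^\gamma GF \in \calL(Y)$, so that it suffices to estimate $\calA R(\lambda,\calA) A^{-\gamma}$. Splitting $\calA = -(-\calA)^{1-\gamma}(-\calA)^{\gamma}$ and commuting $R(\lambda,\calA)$ past the fractional power gives
\begin{equation*}
	\calA R(\lambda,\calA) A^{-\gamma} = -(-\calA)^{1-\gamma} R(\lambda,\calA)\,(-\calA)^{\gamma} A^{-\gamma}.
\end{equation*}
Here $(-\calA)^{\gamma} A^{-\gamma}$ is bounded because the lower-order nature of $A_o$ in \Hv \ ($A_o$ is $A^{1-\varepsilon}$-bounded) forces $\calD((-\calA)^{\gamma}) = \calD(A^{\gamma})$, $0<\gamma<1$, with equivalent graph norms; and the standard analytic-semigroup bound $\norm{(-\calA)^{1-\gamma} R(\lambda,\calA)} \le C|\lambda|^{-\gamma}$ holds on the sector. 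Hence $\norm{\calA R(\lambda,\calA)\,GF}_{\calL(Y)} \le C|\lambda|^{-\gamma}\norm{A^\gamma GF}_{\calL(Y)} \to 0$ as $|\lambda|\to\infty$ in $\Sigma$. For $|\lambda|$ large the Neumann series then inverts $I + \calA R(\lambda,\calA)\,GF$ with uniformly bounded inverse, giving
\begin{equation*}
	R(\lambda, A_{_F}) = \big[ I + \calA R(\lambda,\calA)\, GF \big]^{-1} R(\lambda,\calA),
\end{equation*}
and therefore $\norm{R(\lambda, A_{_F})} \le \tfrac{1}{1 - C|\lambda|^{-\gamma}}\cdot \tfrac{M}{|\lambda|} \le M'/|\lambda|$.

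This sectorial estimate, together with the density of $\calD(A_{_F})$ (automatic since $Y$ is reflexive by \Hi) and the closedness of $A_{_F}$ read off from the resolvent representation, yields by the generation theorem that $A_{_F}$ generates a s.c. analytic semigroup $e^{A_{_F}t}$ on $Y$. For the compactness assertion, if $\calA$ has compact resolvent then $R(\lambda,\calA)$ is compact, so $R(\lambda,A_{_F}) = [\,\cdots\,]^{-1}R(\lambda,\calA)$ is the product of a bounded operator and a compact one, hence compact; an analytic semigroup with compact resolvent is compact for every $t>0$ \cite[Thm 3.3, p 48]{P:1983}.

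I expect the main obstacle to be the rigorous identification $\calD((-\calA)^{\gamma}) = \calD(A^{\gamma})$ (equivalently, the boundedness of $(-\calA)^{\gamma}A^{-\gamma}$), which is what converts the mere smoothing of $GF$ into genuine $|\lambda|$-decay of the perturbation; this is precisely where the strict inequalities $\varepsilon>0$ in \Hv \ and $0<\gamma<1$ in \Hiv \ are indispensable, and one must invoke the stability of fractional-power domains under the $A^{1-\varepsilon}$-bounded perturbation $A_o$.
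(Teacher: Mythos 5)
Your proposal is correct and takes essentially the same approach as the paper's own proof in Section \ref{Sec-2}: you re-derive the resolvent perturbation identity $R(\lambda, A_{_F}) = \left[ I + \calA R(\lambda,\calA)\, GF \right]^{-1} R(\lambda,\calA)$ that the paper cites from \cite[p 80]{P:1983}, and your decay estimate $\norm{\calA R(\lambda,\calA)\,GF}_{\calL(Y)} \leq C \abs{\lambda}^{-\gamma}$ is exactly the paper's \eqref{2.2}--\eqref{2.3}, obtained there via Krein's inequality \cite[Eq (5.15), p 115]{Krein} applied to $\hat{\calA}^{\gamma}G$ rather than via your factorization $GF = A^{-\gamma}(A^{\gamma}GF)$, with both versions resting on the same identification of fractional-power domains of the translated $\calA$ with those of $A$, which you correctly flag as the technical crux. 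The concluding compactness argument (the resolvent of $A_{_F}$ is a bounded operator composed with a compact one, then \cite[Thm 3.3, p 48]{P:1983}) is identical to the paper's.
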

	\nin A proof is given in Section \ref{Sec-2}. On the basis of Proposition \ref{prop-1.1}, we consider the following abstract dynamical system on the space $Y$:
	\begin{subequations}\label{1.7}
		\begin{align}
			\frac{dy}{dt} &= \calA (I - GF)y + f \equiv A_F + f, \quad y(0) = y_0 \in Y\\
			y(t) &= e^{A_{_F} t}y_0 + \int_{0}^{t} e^{A_{_F} (t - s)} f(s) ds
		\end{align}	
	\end{subequations} 
	with the forcing term $f$ specified below. Equation \eqref{1.7} serves as an abstract model of the Partial Differential Equations of parabolic type, written in a closed loop form, with feedback controls acting on the boundary $\Gamma$ of the smooth bounded domain $\Omega \in \BR^d$ in Remark \ref{rmk-1.1}. This will be illustrated in subsequent sections. The goal of the present paper is to establish the following result on the maximal $L^p$-regularity on $Y$ of the feedback operator $A_F$.
	
	\begin{thm}\label{Thm-1.2}
		Assume \Hi-\Hv. With reference to the dynamics \eqref{1.7}, let $y_0 = 0$
		\begin{enumerate}[a)]
			\item Then the map
			\begin{subequations}\label{1.8}
				\begin{align}
					f &\mapsto (Lf)(t) = \int_{0}^{t} e^{A_{_F} (t - s)} f(s) ds:\\
					\text{ continuous } L^p(0,T;Y) &\longrightarrow X^T_p \equiv L^p(0,T;\calD(A_{_F})) \cap W^{1,p}(0,T;Y), \ 1 < p < \infty
				\end{align}
			\end{subequations}
		\nin so that there is a constant $C = C_{p,T} > 0$ such that
		\begin{equation}\label{1.9}
			\norm{y_t}_{L^p(0,T;Y)} + \norm{A_{_F} y}_{L^p(0,T;Y)} \leq C \norm{f}_{L^p(0,T;Y)}.
		\end{equation}
		\nin In short: the operator $A_F$ has maximal $L^p$-regularity on $Y$ up to $T < \infty$. We express this symbolically, using the notation of \cite{Dore:2000}
		\begin{equation}\label{1.10}
			A_{_F} \in MReg (L^p(0,T;Y)).
		\end{equation}
		\item Assume further that the s.c. analytic semigroup $e^{A_{_F}t}$ of Proposition \ref{prop-1.1} is uniformly stable on $Y$. Then, the above results \eqref{1.8}, \eqref{1.9} hold true with $T = \infty$, so that \cite[Theorem 5.2, p307]{Dore:2000}
		\begin{equation}\label{1.11}
			A_{_F} \in MReg (L^p(0,\infty;Y)).
		\end{equation}
		\item Suppose there exists a bounded operator $B \in \calL(Y)$ such that the s.c. analytic semigroup $\ds e^{\BA_{_{F}}t}$ generated via Proposition \ref{prop-1.1} by
		\begin{equation}\label{1.12}
			\BA_{_{F}} = A_{_F} + B = \calA (I - GF) + B
		\end{equation}
		\nin is uniformly stable in $Y$. Then 
		\begin{equation}\label{1.13}
			\BA_{_F} \in MReg (L^p(0,\infty;Y)).
		\end{equation}
		\end{enumerate}
	\end{thm}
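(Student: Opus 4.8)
The plan is to establish the result on the \emph{adjoint} side, where the closed-loop perturbation acts in the favorable order, and then to dualize back to $Y$.

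First I would pass to the formal adjoint of $A_F = \calA(I - GF)$, namely $A_F^* = (I - GF)^*\calA^* = (I - F^* G^*)\calA^* = \calA^* - F^* G^* \calA^*$, and verify that, with the domain induced by \eqref{1.6b}, this is indeed the true adjoint. The decisive structural point is the \emph{order of composition}. In the forward operator $A_F = \calA - \calA GF$ the perturbation $\calA GF$ is hopeless: $GF$ maps $Y$ only into $\calD(A^\gamma)$ with $\gamma<1$, so $\calA GF$ (smoothing first, order-one operator second) fails to be $\calA$-bounded on $Y$. On the adjoint side the composition is reversed: in $F^* G^* \calA^*$ the order-one operator $\calA^*$ acts first and is followed by $G^*$, which by \Hiv \ (whence $G^*(A^*)^\gamma \in \calL(Y^*,U^*)$) \emph{gains} $\gamma$ derivatives. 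This is exactly why \Hiii \ is imposed on $-A^*$ rather than on $-A$.

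Next I would quantify the lower-order character of $P^* := F^* G^* \calA^*$. For $z \in \calD(A^*)$ I write $\calA^* z = -A^* z + A_o^* z$ and use $G^*(A^* z) = \big(G^*(A^*)^\gamma\big)(A^*)^{1-\gamma}z$ together with $\norm{A_o^* z} \le C\big(\norm{(A^*)^{1-\varepsilon}z} + \norm{z}\big)$ from \Hv \ and $F^* \in \calL(U^*,Y^*)$ to obtain $\norm{P^* z} \le C\big(\norm{(A^*)^{\theta}z} + \norm{z}\big)$ with $\theta = \max\{1-\gamma,\,1-\varepsilon\} < 1$. The moment inequality $\norm{(A^*)^{\theta}z} \le \eta\norm{A^* z} + C_\eta\norm{z}$ then shows that $P^*$ is $A^*$-bounded with relative bound zero and that $\calD(A_F^*) = \calD(\calA^*) = \calD(A^*)$; carrying out this identification rigorously from \eqref{1.6b} is the main technical obstacle.

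Part (a) now assembles quickly. By \Hiii, $-A^* \in MReg(L^p(0,T;Y^*))$; since $A_o^*$ is lower order and $P^*$ has relative bound zero, the standard perturbation theorem for maximal $L^p$-regularity (relatively bounded perturbations of small relative bound are admissible) yields $A_F^* = -A^* + A_o^* - P^* \in MReg(L^p(0,T;Y^*))$. Finally \Hi \ ($Y$, hence $Y^*$, is UMD and reflexive, so $A_F^{**} = A_F$) lets me invoke the duality principle for maximal regularity: $A_F^* \in MReg(L^p(0,T;Y^*))$ is equivalent to $A_F \in MReg(L^{p'}(0,T;Y))$, and since maximal regularity is independent of the exponent this gives \eqref{1.8}--\eqref{1.10}. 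For part (b) I would combine (a) with the hypothesis that $e^{A_F t}$ is uniformly stable: analyticity plus uniform stability confines the spectrum of $A_F$ to the open left half-plane, so $0 \in \rho(A_F)$, and the passage from finite $T$ to $T=\infty$ is exactly the cited \cite[Theorem 5.2, p307]{Dore:2000}, giving \eqref{1.11}. Part (c) is then immediate: as $B \in \calL(Y)$ is bounded it is a relative-bound-zero perturbation, so $\BA_F = A_F + B$ inherits finite-horizon maximal regularity from (a); since $e^{\BA_F t}$ is assumed uniformly stable, the argument of (b) promotes this to $T=\infty$ and yields \eqref{1.13}. Throughout, the one genuinely delicate step is the adjoint/duality mechanism of (a) — computing $A_F^*$ with its exact domain and verifying the relative boundedness of $P^*$ — after which (b) and (c) follow from bounded perturbation and the stability hypothesis.
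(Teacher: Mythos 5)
Your proposal is correct, and its core coincides with the paper's own proof: both pass to the adjoint $A_F^* = (I - GF)^*\calA^*$ precisely because taking adjoints reverses the unfavorable order of composition in $\calA(I-GF)$; both then write $A_F^* = -A^* + \Pi$ with $\norm{\Pi x}_{Y^*} \leq C \norm{(A^*)^{\theta}x}_{Y^*}$, $\theta = \max\{1-\gamma, 1-\varepsilon\} < 1$ (the paper's (3.4)--(3.8)), and combine \Hiii{} with the Dore/Kunstmann--Weis perturbation theorem \cite[Theorem 6.2]{Dore:2000}, \cite{KW:2001} to get $A_F^* \in MReg(L^p(0,T;Y^*))$; parts b) and c) are handled identically (uniform stability plus \cite[Theorem 5.2]{Dore:2000}, bounded perturbation for c)). Where you genuinely diverge is the transfer back from $Y^*$ to $Y$ (the paper's Step 4). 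The paper invokes Weis's theorem -- on a UMD space, maximal regularity of the bounded analytic generator $A_F$ is equivalent to $R$-boundedness of $\{tR(it,A_F)\}$ -- together with the fact that $R$-boundedness dualizes in $K$-convex, hence UMD, spaces \cite{HNVW:2016}. You instead invoke a direct duality principle, $A_F^* \in MReg(L^p(0,T;Y^*))$ iff $A_F \in MReg(L^{p'}(0,T;Y))$, followed by the classical $p$-independence of maximal regularity (which is \cite{CV:1986}, also in Dore's survey). Your route is more elementary: the duality step needs only reflexivity (Bochner duality $(L^p(0,T;Y))^* = L^{p'}(0,T;Y^*)$ and $A_F^{**} = A_F$), not $R$-boundedness or $K$-convexity, so in principle it proves the theorem under a weaker hypothesis than UMD. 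The price is that the duality principle is the one assertion you neither prove nor anchor to a reference; it does require an argument -- namely that the Banach adjoint of the singular-integral operator $f \mapsto \int_0^t A_F e^{A_F(t-s)}f(s)\,ds$ is, after time reversal, the corresponding operator for $A_F^*$ on $L^{p'}(0,T;Y^*)$ -- and this computation is exactly the content that the paper replaces with the Weis/$R$-boundedness machinery.

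One caution on a point you rightly flag as delicate but then dispatch too quickly: the moment inequality and relative boundedness of $P^*$ do \emph{not} by themselves show $\calD(A_F^*) = \calD(\calA^*)$. In general $(TS)^*$ strictly contains $S^*T^*$ for bounded $S$ and closed densely defined $T$, so the identification of the adjoint's domain needs a separate argument: either the product-adjoint rule the paper cites from \cite[p 14]{Fat.1}, or the observation that $(I-GF)^*\calA^*$ with domain $\calD(\calA^*)$ is itself a generator (by your own perturbation bound and \cite[Corollary 2.4, p 81]{P:1983}), is a restriction of the generator $A_F^*$ of the adjoint semigroup, and two generators with a common resolvent point, one extending the other, must coincide. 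With that supplement your argument is complete.
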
 
	\begin{rmk}\label{rmk-1.2}
		Case b) occurs in the case of uniform stabilization of the closed-loop linearized Navier-Stokes equations with a feedback control pair $\{ \bs{v}, \bs{u} \}$, with boundary feedback control $\bs{v}$ acting on an arbitrary small connected portion $\wti{\Gamma}$ of the boundary $\Gamma = \partial \Omega$ of the bounded domain $\Omega$, and interior control $\bs{u}$ acting tangentially (parallel to $\wti{\Gamma}$) on an arbitrary interior collar $\omega$ supported by $\wti{\Gamma}$. Fig 2, Section \ref{Sec-5}. The operator $F$ is the feedback operator for $\bs{v}$, the operator $B$ is the feedback operator for $\bs{u}$. The control $\bs{u}$ \textit{cannot be dispensed with} to obtain maximal $L^p$-regularity up to $T = \infty$: the presence of the additional bounded operator $B \in \calL(Y)$ is \textit{critical} to achieve such uniform stabilization. With $B = 0$ one obtains maximal $L^p$-regularity only up to any $T < \infty$. All this is to be discussed in Section \ref{Sec-5}. In other cases (Sections \ref{Sec-4} and \ref{Sec-6}), we can take $B = 0$. 
	\end{rmk}

	\section{Proof of Proposition \ref{prop-1.1}.} \label{Sec-2}
	The short proof, patterned after \cite[p 151]{L-T.1}, is inserted here for completeness. It uses two key ingredients: the classical perturbation theory of the resolvent $R(\lambda, A_{_F})$ in terms of $R(\lambda, \calA)$ \cite[p 80]{P:1983}; and assumption \eqref{1.1} on $G$, in addition to \eqref{1.4} for $F$. Both statements: (i) that $A_{_F}$ generates a s.c. analytic semigroup $\ds e^{A_{_F}t}$ on $Y,\ t > 0$, and (ii) that the resolvent $R(\lambda, A_{_F})$ is compact on $Y$ rely on the first ingredient. The classical perturbation formula \cite[p 80]{P:1983} written for $A_{_F}$ in \eqref{1.6} is:
	\begin{equation}\label{2.1}
		R(\lambda, A_{_F}) = \left[ I + R(\lambda, \calA)\calA GF \right]^{-1} R(\lambda, \calA),
	\end{equation}
	\nin at least for $\lambda \in \rho(A)$, with $Re \ \lambda > $ some $\rho_0 > 0$. Next, property $\ds A^{\gamma}G \in \calL(Y)$ for some $0 < \gamma < 1$ as in \eqref{1.1} yields $\ds \hat{\calA}^{\gamma} G = (kI - \calA)^{\gamma} G \in \calL(U,Y)$ for some $k > 0$ suitably large, as $\ds \calD(\calA) = \calD(A)$ by \eqref{1.3}, where $\ds -\hat{\calA} = \calA - kI$ generates a s.c. analytic semigroup $\ds e^{\hat{\calA}t}$ on $Y$ by \eqref{1.5}. Moreover, $\ds \hat{\calA}^{\gamma}GF \in \calL(Y)$ by \eqref{1.4}. Accordingly a well-known formula \cite[Eq (5.15), p 115]{Krein} gives 
	\begin{align}
		\norm{R(\lambda, \hat{\calA}) \hat{\calA}GF }_{\calL(Y)} = \norm{R(\lambda, \hat{\calA}) \hat{\calA}^{1 - \gamma}(\hat{\calA}^{\gamma}GF)}_{\calL(Y)} &\leq C_{\gamma}\norm{R(\lambda, \hat{\calA}) \hat{\calA}^{1 - \gamma}}_{\calL(Y)} \label{2.2}\\
		&\leq \frac{\tilde{C}_{\gamma}}{\abs{\lambda}^{\gamma}} \longrightarrow 0 \text{ as } \abs{\lambda} \longrightarrow \infty, \lambda \in \rho(\hat{\calA}) \label{2.3}
	\end{align}
	\nin Then by \eqref{2.1} and \eqref{2.3}, we obtain
	\begin{equation}\label{2.4}
		\norm{R(\lambda, A_{_F})}_{\calL(Y)} \leq C_{\gamma, \rho_0} \norm{R(\lambda, \calA)}_{\calL(Y)}, \quad \forall \lambda,\ Re \ \lambda > \text{ some } \rho_0 > 0.
	\end{equation}
	\nin Thus, via \eqref{2.4} the properties of $R(\lambda, \calA)$ [generation by $A$ of a s.c. analytic semigroup on $Y$ by \eqref{1.5} and compactness transfer into corresponding properties for $R(\lambda, A_F)$] \cite[Lemma 4.2.3, p 185]{Fat.1}. Finally, compactness of the resolvent and analyticity of the semigroup a-fortiori imply compactness of the semigroup for all $t \geq 0$ \cite[Thm 3.3 p48]{P:1983}. Proposition \ref{prop-1.1} is proved.
	
	\section{Proof of Theorem \ref{Thm-1.2}.}\label{Sec-3}
	
	\nin \textit{Part a)} \uline{Step 1:} With $F \in \calL(Y,U)$ by \eqref{1.4} and $G$ satisfying \eqref{1.1}, the intrinsic presence of the operator $GF$ as a right factor in the expression of $A_{_F}$ in \eqref{1.6a} makes such expression not directly suitable for deducing its maximal $L^p$-regularity on $Y$, as it would leave the power $A^{1-\gamma}$ on the LHS unaccounted for on $Y$. Accordingly, we find it convenient to consider instead the more amenable adjoint/dual operator.	
	\begin{subequations}\label{3.1}
		\begin{eqnarray}
			A^*_{_F} = (I - GF)^*\calA^* = -(I - GF)^*A^* + (I - GF)^*A_o^* \label{3.1a}\\
			Y^* \supset \calD(A^*_{_F}) = \calD(\calA^*) = \calD(A^*) \longrightarrow Y^* \label{3.1b}
		\end{eqnarray}
	\end{subequations}
	\nin via \eqref{1.3}, since $GF \in \calL(Y)$ and $(I - GF)^* \in \calL(Y^*)$. We rewrite $A_F^*$ in \eqref{3.1a} as
	\begin{equation}\label{3.2}
		A^*_{_F} = -A^* + [F^*G^*A^{*^{\gamma}}]A^{*^{1-\gamma}} + (I -  GF)^*(A^{-(1-\varepsilon)}A_o)^*A^{*^{1-\varepsilon}}
	\end{equation}
	\nin whereby the adjoint of the right factor $(I-GF)$ in \eqref{1.6a} becomes now a left factor $(I-GF)^*$ in \eqref{3.2}. In obtaining in \eqref{3.1a} the form of $A_F^*$ from that of $A_F$ in \eqref{1.6a}, we have used \cite[p 14]{Fat.1} that $(I - GF) \in \calL(Y)$. Moreover, we have also used $\ds A_o = A^{1-\varepsilon} (A^{-(1-\varepsilon)}A_o)$, hence $\ds A_o^* = (A^{-(1-\varepsilon)}A_o)^* A^{*^{1-\varepsilon}}$, with $(A^{-(1-\varepsilon)}A_o)^* \in \calL(Y^*)$ by \eqref{1.2b}.\\
	
	\nin \uline{Step 2:} By duality on Proposition \ref{prop-1.1} on the reflexive Banach space $Y$, the operator $A_F^*$ in \eqref{3.1} generates a s.c. analytic semigroup $\ds e^{A^*_F t}$ on $Y^*$.\\
	
	\nin \uline{Step 3:}
	
	\begin{prop}\label{prop-3.1}
		For the generator $A^*_F$ in \eqref{3.1} of the s.c. analytic semigroup $e^{A^*_F t}$ on $Y^*$, we have
		\begin{equation}\label{3.3}
			A^*_{_F} \in MReg(L^p(0,T;Y^*)), \ 0 < T < \infty.
		\end{equation}
	\end{prop}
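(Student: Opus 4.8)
The plan is to read off from the decomposition \eqref{3.2} that $A^*_{_F}$ is nothing but a \emph{lower-order} perturbation of the generator $-A^*$, which already enjoys maximal $L^p$-regularity by hypothesis \Hiii, and then to invoke the stability of maximal $L^p$-regularity under perturbations of relative bound zero. Concretely, I would first rewrite \eqref{3.2} as $A^*_{_F} = -A^* + B^*$ on $\calD(A^*)$, with $B^* = B_1^* + B_2^*$, where $B_1^* = [F^*G^*A^{*^{\gamma}}]A^{*^{1-\gamma}}$ and $B_2^* = (I-GF)^*(A^{-(1-\varepsilon)}A_o)^*A^{*^{1-\varepsilon}}$.

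The second step is to record that the coefficient operators standing to the left of the fractional powers are bounded on $Y^*$. Indeed $F^*G^*A^{*^{\gamma}} = (A^{\gamma}GF)^* \in \calL(Y^*)$, since $A^{\gamma}G \in \calL(U,Y)$ by \Hiv\ (equation \eqref{1.1}) and $F \in \calL(Y,U)$ by \eqref{1.4}; and $(I-GF)^*(A^{-(1-\varepsilon)}A_o)^* \in \calL(Y^*)$, since $GF \in \calL(Y)$ and $(A^{-(1-\varepsilon)}A_o)^* \in \calL(Y^*)$ by \eqref{1.2b}. Thus each of $B_1^*$, $B_2^*$ has the form (bounded operator)$\,\cdot\,A^{*^{\beta}}$ with exponent $\beta = 1-\gamma$, respectively $\beta = 1-\varepsilon$, each \emph{strictly less than one}.

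The third step is the quantitative heart of the argument. Because $Y$ is reflexive by \Hi\ and $-A$ generates a bounded analytic semigroup on $Y$ by \Hii, the adjoint $-A^*$ generates a bounded analytic semigroup on $Y^*$, the fractional powers $A^{*^{\beta}} = (A^{\beta})^*$ are well defined, and the moment (interpolation) inequality holds \cite[fractional-power estimates]{P:1983}: for each $0 < \beta < 1$ there is $C_{\beta}$ with $\norm{A^{*^{\beta}}x}_{Y^*} \leq C_{\beta}\,\norm{A^*x}_{Y^*}^{\beta}\,\norm{x}_{Y^*}^{1-\beta}$ for $x \in \calD(A^*)$. Combining this with Young's inequality gives, for every $\delta > 0$, a constant $C_{\delta}$ such that $\norm{A^{*^{\beta}}x}_{Y^*} \leq \delta\,\norm{A^*x}_{Y^*} + C_{\delta}\,\norm{x}_{Y^*}$; hence each $B_i^*$, and therefore $B^*$, is $A^*$-bounded with relative bound \emph{zero}. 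Finally, since $-A^* \in MReg(L^p(0,T;Y^*))$ by \Hiii, and $B^*$ has $A^*$-relative bound zero, the perturbation theorem for maximal $L^p$-regularity on the UMD space $Y^*$ (stability under relatively bounded perturbations of sufficiently small relative bound, \cite{KW:2004}, \cite{PS:2016}) applies and yields $A^*_{_F} = -A^* + B^* \in MReg(L^p(0,T;Y^*))$ for every $0 < T < \infty$, which is precisely \eqref{3.3}.

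The genuine content, and the step I expect to be the main obstacle, is the correct invocation of the perturbation theorem: such results require the relative bound to fall below a threshold fixed by the maximal-regularity constant of $-A^*$, and the reason this is guaranteed here is exactly the fractional-power structure of $B^*$, which delivers relative bound zero rather than merely a small positive bound. A secondary point demanding care is that the entire argument is carried out on the dual space $Y^*$ with the adjoint operators; it is the reflexivity of $Y$ from \Hi\ that makes $-A^*$ a bona fide $C_0$-analytic generator, validates the identification $(A^{\beta})^* = A^{*^{\beta}}$, and thereby legitimizes both the decomposition \eqref{3.2} and the moment inequality above.
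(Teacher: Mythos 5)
Your proposal is correct and follows essentially the same route as the paper: the identical decomposition $A^*_{_F} = -A^* + \Pi$ into the generator $-A^*$ plus the two fractional-power-bounded terms, the same boundedness observations drawn from \Hiv\ and \Hv, and a perturbation theorem applied against the maximal regularity of $-A^*$ guaranteed by \Hiii. The only difference is the final citation: the paper invokes the perturbation theorem for $A^{*^{\theta_0}}$-bounded perturbations directly (Dore, Theorem 6.2, or Kunstmann--Weis, Remark 1i), whereas you first reduce the $A^{*^{\beta}}$-boundedness to relative bound zero via the moment inequality and Young's inequality and then apply the small-relative-bound perturbation theorem---which is exactly how the result the paper cites is itself proved, so the two arguments coincide in substance.
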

	
	\begin{proof}
		The proof is based on a perturbation argument. With $\ds [A^{\gamma} GF]^* = F^* G^* A^{*^{\gamma}} \in \calL(Y^*)$ by \Hiv =\eqref{1.1} and \eqref{1.4}, rewrite \eqref{3.2} as:
		\begin{align}
			A_F^* &= - A^* + \Pi \label{3.4}\\
			\Pi &= [F^* G^* A^{*^{\gamma}}]A^{*^{1 - \gamma}} + [(I - GF)^* (A^{-(1 - \varepsilon)}A_o)^*]A^{*^{1 - \varepsilon}}. \label{3.5}
		\end{align}
		\nin In \eqref{3.5}, both terms in the square brackets $[ \quad ]$ are bounded in $Y^*$ by assumption \Hiv \ = \eqref{1.1} and \Hv. The following estimates then hold true:
		\begin{align}
			(i) \ &\norm{[F^* G^* A^{*^{\gamma}}]A^{*^{1 - \gamma}}x}_{Y^*} \leq C \norm{A^{*^{1 - \gamma}}x}_{Y^*}, \ \forall x \in \calD \big(A^{*^{1 - \gamma}} \big) \label{3.6}\\
			(ii) \ &\norm{[(I - GF)^* (A^{-(1 - \varepsilon)}A_o)^*]A^{*^{1 - \varepsilon}}x}_{Y^*} \leq C \norm{A^{*^{1 - \varepsilon}}x} _{Y^*},  \ \forall x \in \calD \big(A^{*^{1 - \varepsilon}} \big). \label{3.7}
		\end{align} 
		\nin Hence,  by \eqref{3.6}, \eqref{3.7} the perturbation $\Pi$ in \eqref{3.5} satisfies
		\begin{equation}
			\norm{\Pi x}_{Y^*} \leq C \norm{A^{*^{\theta_0}}x}_{Y^*}, \ x \in \calD \big(A^{*^{\theta_0}} \big), \ \theta_0 = \max \{ 1 - \varepsilon, 1 - \gamma\} < 1. \label{3.8}
		\end{equation} 
		We are now in a position to draw some consequences from \eqref{3.4}, \eqref{3.8}:
		\begin{enumerate}[(a)]
			\item The perturbation $\Pi$ is $\ds A^{*^{\theta_0}}$-bounded on $Y^*$, $0 < \theta_0 < 1$.
			\item On the other hand, by \Hiii,  we have $\ds A^* \in MReg(L^p(0,T;Y^*))$. 
		\end{enumerate}
	\nin Then via \eqref{3.4}, properties (a), (b) imply via \cite[Theorem 6.2, p 311]{Dore:2000} or \cite[Remark 1i, p 426 for $\beta = 1$]{KW:2001} that $\ds A^*_{_F} \in MReg(L^p(0,T;Y^*))$ and Proposition \ref{prop-3.1} is proved.
	\end{proof}
	\nin \uline{Step 4:} We now prove Theorem \ref{Thm-1.2} that $\ds A_{_F} \in MReg(L^p(0,T;Y)$ as claimed in \eqref{1.10}. To this end, we invoke the fundamental result of L. Weis \cite[Theorem 1.11, p 76]{KW:2004}, \cite[Theorem, p 198]{We:2001}. Since by Proposition \ref{prop-1.1}, $A_{_F}$ generates a s.c. analytic semigroup $\ds e^{A_{_F}t}$ on the UMD-space $Y$ which modulo a translation (innocuous for the present argument), we may take to be bounded. Then the sought after property that $\ds A_{_F} \in MReg(L^p(0,T;Y))$ is equivalent to the property that the family $\tau \in \calL(Y)$
	\begin{equation}\label{3.9}
		\tau = \left\{ tR(it,A_{_F}), \ t \in \BR\backslash \{0\} \right\} \text{ be } R\text{-bounded},
	\end{equation}
	\nin where $\ds R( \cdot ,A_{_F})$ denotes the resolvent operator of $\ds A_{_F}$. However, in our present UMD setting for $Y$, the family $\tau$ in \eqref{3.9} is $R$-bounded if and only if the corresponding dual family $\tau'$ in $\ds \calL(Y^*)$ 
	\begin{equation}\label{3.10}
	\tau' = \left\{ tR(it,A^*_{_F}), \ t \in \BR\backslash \{0\} \right\} \text{ is } R\text{-bounded}.
	\end{equation}
	\nin This result follows from \cite[Proposition 8.4.1 p. 211]{HNVW:2016} which shows such equivalence in $K$-convex spaces, combined with \cite[Ex 7.4.8, p 113]{HNVW:2016} stating that a UMD space is $K$-convex. The special case of such duality with respect to the space $Y = L^q(\Omega), \ 1 < q < \infty$, with $Y^* = L^{q'}(\Omega), \ \rfrac{1}{q} + \rfrac{1}{q'} = 1$ is given in \cite[Corollary 2.11, p 90]{KW:2004}. But the $R$-boundedness property in \eqref{3.10} is equivalent by the same result \cite[Theorem 1.11, p 76]{KW:2004}, \cite[Theorem, p 198]{We:2001}, to the property that $\ds A^*_{_F} \in MReg(L^p(0,T;Y^*))$, and this is true by Proposition \ref{prop-3.1}. In conclusion, we have $\ds A_{_F} \in MReg(L^p(0,T;Y))$, and Theorem \ref{Thm-1.2}, part a) is proved.\\
	
	\nin \textit{Part b)} If it is known that the s.c. analytic semigroup $\ds e^{A_{_F}t}, \ t \geq 0$ on $Y$ is uniformly stable, then we can take $T = \infty$ by invoking \cite[Theorem 5.2, p 307]{Dore:2000}: $\ds A_{_F} \in MReg(L^p(0,\infty;Y))$.\\
	
	\nin \textit{Part c)} is now obvious as $\ds B \in \calL(Y)$. \qed
	
	\begin{clr}
		In a UMD space $Y$, maximal $L^p$-regularity of $A: Y \supset \calD(A) \longrightarrow Y$ is equivalent to maximal $L^p$-regularity for $A^*: Y^* \supset \calD(A^*) \longrightarrow Y^*$.
	\end{clr}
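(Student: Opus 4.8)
The plan is to reproduce, in the abstract, the duality argument already carried out in Step 4 of the proof of Theorem \ref{Thm-1.2}(a), now stripped of the particular factored structure of $A_{_F}$. The assertion is a chain of three equivalences anchored on the fundamental theorem of L. Weis \cite[Theorem 1.11, p76]{KW:2004}, \cite[Theorem, p198]{We:2001}, which characterizes maximal $L^p$-regularity in a UMD space by the $R$-boundedness of the scaled resolvent family, bridged by the self-duality of $R$-boundedness on $K$-convex spaces.

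First I would dispose of the generation hypotheses required to apply Weis's theorem on both sides. Since $Y$ is UMD it is reflexive, and its dual $Y^*$ is again UMD \cite{HNVW:2016}. Maximal $L^p$-regularity of $A$ already forces $A$ to generate a bounded analytic semigroup $e^{At}$ on $Y$ (modulo an innocuous translation); by reflexivity the adjoint semigroup $e^{A^*t}$ is strongly continuous and bounded analytic on $Y^*$, so $A^*$ is a bona fide generator on the UMD space $Y^*$. Thus Weis's theorem applies to $A$ on $Y$ and to $A^*$ on $Y^*$, giving: $A \in MReg(L^p(0,T;Y))$ iff $\tau = \{\,tR(it,A): t\in\BR\setminus\{0\}\,\}$ is $R$-bounded in $\calL(Y)$, and $A^* \in MReg(L^p(0,T;Y^*))$ iff $\tau' = \{\,tR(it,A^*): t\in\BR\setminus\{0\}\,\}$ is $R$-bounded in $\calL(Y^*)$.

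The bridge is then the identity for the Banach-space dual of the resolvent, $R(it,A)^* = R(it,A^*)$ --- note that no complex conjugation of the scalar appears, because on the genuine dual $Y^*$ one has $(itI)^* = itI$, whence $(itI - A)^* = itI - A^*$. Hence $\tau'$ is exactly the dual family $\{\,S^*: S\in\tau\,\}$ of $\tau$. I would then invoke the self-duality of $R$-boundedness: a family $\tau \subset \calL(Y)$ is $R$-bounded if and only if $\tau' \subset \calL(Y^*)$ is $R$-bounded, valid here because a UMD space is $K$-convex \cite[Ex 7.4.8, p113]{HNVW:2016} and $R$-boundedness is self-dual on $K$-convex spaces \cite[Proposition 8.4.1, p211]{HNVW:2016}. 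Chaining the three equivalences yields $A\in MReg(L^p(0,T;Y)) \iff A^* \in MReg(L^p(0,T;Y^*))$, which is the claim.

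I do not expect a genuine computational obstacle; as in the proof of the theorem, the content lies entirely in aligning the hypotheses of two black-box results. The one step where the UMD assumption is truly indispensable --- and where the equivalence would break down in a general Banach space --- is the self-duality of $R$-boundedness, which requires $K$-convexity; this is the crux to get right. A secondary point of care is the generation claim for the adjoint, where reflexivity (hence strong continuity of $e^{A^*t}$) is exactly what guarantees that $A^*$ qualifies as a generator to which Weis's criterion may be applied.
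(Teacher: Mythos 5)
Your proposal is correct and follows essentially the same route as the paper, whose proof of this corollary is precisely the duality argument of Step~4 of the proof of Theorem~\ref{Thm-1.2}: Weis's characterization of maximal $L^p$-regularity via $R$-boundedness of $\{tR(it,\cdot)\}$ on both $Y$ and $Y^*$, bridged by the self-duality of $R$-boundedness in $K$-convex (hence UMD) spaces from \cite[Proposition 8.4.1]{HNVW:2016} and \cite[Ex 7.4.8]{HNVW:2016}. Your added care about the generation hypotheses for $A^*$ (via reflexivity of the UMD space $Y$) is a point the paper leaves implicit but is handled the same way in spirit.
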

	\nin This is contained in the proof given in Step 4 above.
	
	\section{A classical parabolic equation with finite dimensional boundary feedback control: maximal $L^p$-regularity on $Y = L^q(\Omega), 1 < q < \infty$.}\label{Sec-4}
	
	\subsection{Open and closed-loop boundary control problem}
	Let $\Omega$ be an open bounded domain in $\BR^d, \ d \geq 2,$ with sufficiently smooth boundary $\Gamma = \partial \Omega$. Let $\omega$ be an arbitrary small open smooth subset of the interior $\Omega$, $\omega \subset \Omega$, of positive measure.	
	\begin{center}
		\begin{tabular}{c}
			\begin{tikzpicture}[x= 10pt,y = 10pt,>=stealth, scale=0.4]
			\draw[thick]
			(-15,5)
			.. controls (-20,-9) and (-5,-18) .. (15,-7)
			.. controls  (24,-1) and (15,13).. (6,6)
			.. controls (3,3) and (-3,3) .. (-6,6)
			.. controls (-9,9) and (-13,9) .. (-15,5);
			\draw (15,-1) node[scale = 1] {$\Omega$};
			\draw (15,-10) node[scale = 1] {$\Gamma$};
			\draw[fill, line width = 1pt, ,opacity = .25] (4.4,-1) circle (2);
			\draw (-1.2,-6.2) node[scale = 1] {$\omega$};
			\draw[->,line width = 1pt]  (-0,-5.2) -- (2,-3);
			\end{tikzpicture}\\
			\mbox{Fig 1: Internal subportion $\omega$.}
			\end{tabular}
	\end{center}
	\nin For notational simplicity and space constraints, we shall focus on the canonical case of the Laplacian translated, in order to make the original boundary homogeneous problem (\hyperref[4.1]{4.1a-b-c}) with $f \equiv 0$ unstable. This will then introduce the boundary feedback stabilization problem that will ultimately be an illustration of the abstract Theorem \ref{Thm-1.2} with $T = \infty$ in Section \ref{Sec-4.3}. Without uniform stabilization, the boundary feedback problem (\hyperref[4.3]{4.3a-b-c}) will claim maximal $L^p$-regularity only for $T < \infty$ in Theorem \ref{Thm-4.1}. The treatment extends to second order (say), uniformly strongly elliptic operators. Thus, we consider the following parabolic problem in the unknown $y(t,x), \ x \in \Omega$, initially with open loop boundary control $f$ in the Dirichlet B.C.
	\begin{subequations}\label{4.1}
	\begin{empheq}[left=\empheqlbrace]{align}
	y_t &= (\Delta + c^2)y & \text{ in } Q \equiv (0,T] \times \Omega \label{4.1a}\\
	y|_{t = 0} &= y_0 & \text{ in } \Omega \label{4.1b}\\
	\left. y \right|_{\Sigma} &= f & \text{ in } \Sigma \equiv (0,T] \times \Gamma \label{4.1c}
	\end{empheq}
	\end{subequations}
	\nin Our goal is to convert the open loop control system \eqref{4.1} into a closed loop feedback control system. We choose the open loop control $f$ to be expressed as a finite dimensional feedback operator $F$ of the form
	\begin{equation}\label{4.2}
		f = Fy = \sum_{k=1}^{K} \ip{y}{w_k}_{L^2(\omega)} g_k,
	\end{equation}
	with given vectors $w_k \in L^2(\omega), \ g_k \in L^q(\Gamma), \ 1 < q < \infty,$ so that corresponding closed loop feedback control problem is
	\begin{subequations}\label{4.3}
		\begin{empheq}[left=\empheqlbrace]{align}
		y_t &= (\Delta + c^2)y & \text{ in } Q \equiv (0,T] \times \Omega \label{4.3a}\\
		y|_{t = 0} &= y_0 & \text{ in } \Omega \label{4.3b}\\
		\left. y \right|_{\Sigma} &= \sum_{k=1}^{K} \ip{y}{w_k}_{L^2(\omega)} g_k, & \text{ in } \Sigma \equiv (0,T] \times \Gamma \label{4.3c}
		\end{empheq}
	\end{subequations}
	\nin Our basic function space is $Y \equiv L^q(\Omega), \ 1 < q < \infty$.
	
	\subsection{Abstract model of the closed loop system \eqref{4.3}. Verification of Theorem \ref{Thm-1.2}, $T < \infty$.}
	
	\nin We introduce the translated Dirichlet Laplacian and corresponding Dirichlet map.
	\begin{equation}\label{4.4}
		\calA_{_{tr}} \varphi = (\Delta + c^2)\varphi, \ \calA_{_{tr}}: Y \supset \calD(\calA_{_{tr}}) = \left\{ \varphi\in W^{2,q}(\Omega): \left. \varphi \right|_{\Gamma} = 0 \right\} \longrightarrow Y.
	\end{equation}
	\vspace{-.8cm}
	\begin{subequations}\label{4.5}
		\begin{eqnarray}
			\phi = D g \iff \left\{ (\Delta + c^2)\phi \equiv 0 \text{ in } \Omega, \ \left. \varphi \right|_{\Gamma} = g \right\} \label{4.5a}\\
			D: L^q(\Gamma) \longrightarrow W^{\rfrac{1}{q},q}(\Omega) \subset \calD \left( (-\calA )^{\rfrac{1}{2q}}  \right)  \label{4.5b}
		\end{eqnarray}
	\end{subequations}
	\nin where
	\begin{equation}\label{4.6}
	\calA \varphi = \Delta \varphi, \quad \calD(\calA_{_{tr}}) = \calD(\calA)
	\end{equation}	
	\nin is a suitable translation of $\ds \calA_{_{tr}}$, so that the fractional powers $\ds (-\calA)^{\theta}, \ 1 < \theta < \infty$, are defined by complex interpolation \cite{A:1975}. The operator $\calA_{_{tr}}$ in \eqref{4.4} has compact resolvent on $Y = L^q(\Omega)$ and is the generator of a s.c. analytic semigroup $\ds e^{\calA_{_{tr}} t}$ on $Y \equiv L^q(\Omega)$ \cite[Example, p101]{Fri:1976}. Returning to problem \eqref{4.1} and using the definition of the Dirichlet map $D$ in \eqref{4.5}, we can rewrite Eq \eqref{4.1a} as
	\begin{equation}\label{4.7}
		y_t = (\Delta + c^2)(y - D f) \text{ in } Q, \quad \left. [y - D f] \right|_{\Gamma} = 0.
	\end{equation}
	\nin Hence, the abstract version of the open-loop system \eqref{4.1} is
	\begin{equation}\label{4.8}
		y_t = \calA_{_{tr}}(y - D f) \text{ on } Y \equiv L^q(\Omega).
	\end{equation}
	\nin Next, returning to \eqref{4.2} with $\ds F \in \calL(L^2(\omega), L^q(\Gamma))$, we see that the abstract version \eqref{4.8} of the closed-loop system \eqref{4.3} specializes to
	\begin{equation}\label{4.9}
		y_t = \calA_{_{tr}}(I - D F) y = A_{_{F,tr}} y, \ y(0) = y_0, \text{ on } Y \equiv L^q(\Omega).
	\end{equation}
	\nin We next verify that the boundary feedback closed loop control problem (\hyperref[4.3a]{4.3a-c}) that is, its abstract model \eqref{4.9}, satisfies Theorem \ref{Thm-1.2} for $T < \infty$.
	\begin{thm}\label{Thm-4.1}
		Let $1 < q < \infty, \ w_k \in L^2(\omega), \ g_k \in L^q(\Gamma)$.
		\begin{enumerate}[(i)]
			\item The feedback operator in \eqref{4.9}
			\begin{subequations}\label{4.10}
				\begin{align}
					A_{_{F,tr}} &= \calA_{_{tr}}(I - D F)\\
					L^q(\Omega) &\supset \calD(A_{_{F,tr}}) = \left\{ x \in L^q(\Omega): (I - D F)x \in \calD(\calA_{tr}) \right\}
				\end{align} 
			\end{subequations}
		\nin is the generator of a s.c. analytic semigroup $\ds e^{A_{_{F,tr}} t}$ on $Y \equiv L^q(\Omega), \ t \geq 0$.
		\item Moreover, $A_{_{F,tr}}$ has maximal $L^p$-regularity on $Y \equiv L^q(\Omega)$ up to $T < \infty$,
		\begin{equation}
		A_{_{F,tr}} \in MReg \left( L^p(0,T; Y) \right).
		\end{equation}
		\end{enumerate}
	\end{thm}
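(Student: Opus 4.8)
The plan is to recognize the closed-loop operator $A_{_{F,tr}}=\calA_{_{tr}}(I-DF)$ of \eqref{4.10} as a concrete instance of the abstract operator $A_{_F}=\calA(I-GF)$ of \eqref{1.6}, to verify the standing hypotheses \Hi--\Hv\ in this setting, and then to read off parts (i) and (ii) directly from Proposition \ref{prop-1.1} and Theorem \ref{Thm-1.2}(a) respectively. The dictionary I would fix is: $Y=L^q(\Omega)$; the abstract generator $-A:=\calA=\Delta$ (the Dirichlet Laplacian of \eqref{4.6}), so that $A=-\Delta$ is the positive Dirichlet Laplacian; the lower-order shift $A_o:=c^2I$; hence the abstract $\calA=-A+A_o=\Delta+c^2=\calA_{_{tr}}$; the Green operator $G:=D$ of \eqref{4.5}; the feedback $F$ of \eqref{4.2}; and $U:=L^q(\Gamma)$. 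With these identifications $\calA(I-GF)=\calA_{_{tr}}(I-DF)=A_{_{F,tr}}$, exactly \eqref{4.10}. I would flag explicitly that the symbol $\calA$ of \eqref{4.6} denotes $\Delta$ and is \emph{not} the abstract $\calA$ of Section \ref{Sec-1}, which here plays the role of $\calA_{_{tr}}$.

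Most of the verification is bookkeeping. Hypothesis \Hi\ holds since $L^q(\Omega)$, $1<q<\infty$, is reflexive and hence UMD. Hypothesis \Hii\ is the statement, recalled after \eqref{4.6} and taken from \cite[Example, p101]{Fri:1976}, that $-A=\Delta$ generates a bounded (indeed exponentially stable) analytic semigroup on $L^q(\Omega)$, with well-defined fractional powers $A^\theta=(-\Delta)^\theta$; no translation of $A$ is needed, since $-\Delta$ is positive. For \Hiv\ I would read \eqref{4.5b} off directly: the Dirichlet map satisfies $A^\gamma G=(-\Delta)^{\rfrac{1}{2q}}D\in\calL(L^q(\Gamma),L^q(\Omega))$, so \eqref{1.1} holds with $\gamma=\rfrac{1}{2q}\in(0,\rfrac{1}{2})\subset(0,1)$. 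For \Hv, the shift $A_o=c^2I$ is bounded on $Y$ and on $Y^*$, hence trivially $A^{1-\varepsilon}$- (resp. $A^{*^{1-\varepsilon}}$-) bounded for any $\varepsilon\in(0,1)$, giving $A_oA^{-(1-\varepsilon)}\in\calL(Y)$ and its adjoint analogue; moreover $F$ of \eqref{4.2} is a finite-rank map with range spanned by the $g_k\in L^q(\Gamma)$ and bounded functionals $y\mapsto\ip{y}{w_k}_{L^2(\omega)}$ (bounded on $L^q(\Omega)$ once $w_k\in L^{q'}(\omega)$, which is automatic from $w_k\in L^2(\omega)$ on the bounded set $\omega$ when $q\ge2$), so $F\in\calL(Y,U)$, which is \eqref{1.4}.

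The one genuinely analytic input, and the step I expect to carry the real weight, is \Hiii: that the adjoint $-A^*=\Delta^*$ has maximal $L^p$-regularity on $Y^*=L^{q'}(\Omega)$. Here $\Delta^*$ is again a second-order uniformly elliptic operator under Dirichlet boundary conditions on $L^{q'}(\Omega)$, $1<q'<\infty$, over a smooth bounded domain, and maximal $L^p$-regularity for such operators is classical: it follows from their bounded $H^\infty$-calculus / $R$-sectoriality, as in the elliptic and Stokes references of the Introduction (\cite{DV:1991}, \cite{Gi:1981}, \cite{KW:2001}, \cite{KW:2004}, \cite{PS:2016}). I would simply invoke that theory for the Dirichlet Laplacian; by the Corollary following Theorem \ref{Thm-1.2} it is in fact equivalent to maximal $L^p$-regularity of $\Delta$ itself on $L^q(\Omega)$, so one may verify whichever form is more convenient.

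With \Hi--\Hv\ established, Proposition \ref{prop-1.1} yields part (i): $A_{_{F,tr}}$ generates a s.c.\ analytic semigroup $e^{A_{_{F,tr}}t}$ on $L^q(\Omega)$; and since $\calA_{_{tr}}$ has compact resolvent (noted after \eqref{4.6}), the semigroup is moreover compact for $t>0$. Theorem \ref{Thm-1.2}(a) then yields part (ii), namely $A_{_{F,tr}}\in MReg(L^p(0,T;Y))$ for every $T<\infty$. I would deliberately stop at finite $T$: uniform stability, and thus the passage to $T=\infty$ through Theorem \ref{Thm-1.2}(b), is not asserted here and is instead the business of the stabilized treatment in Section \ref{Sec-4.3}.
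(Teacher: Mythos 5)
Your proposal is correct and follows the paper's overall strategy for part (ii) — verify the standing hypotheses \Hi--\Hv\ and invoke Theorem \ref{Thm-1.2}(a) — but you choose a genuinely different operator splitting, and it is worth recording what this buys. The paper's dictionary is $-A := \calA_{_{tr}} = \Delta + c^2$ (Dirichlet) and $A_o := 0$, so the \emph{unstable} translated operator plays the role of the abstract $A$; as a consequence the paper must explicitly waive part of its own hypotheses, stating that \Hi--\Hv\ are verified ``except for boundedness of $e^{-A^*t}$ on $Y^*$ in \Hiii'', and its verification of \Hii\ (``a-fortiori true, since $\calA_{_{tr}}$ generates\dots'') glosses over the fact that $e^{\calA_{_{tr}}t}$ is not bounded. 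Your dictionary instead takes $-A := \Delta$ (the stable Dirichlet Laplacian, the operator the paper itself denotes $\calA$ in \eqref{4.6}) and absorbs the destabilizing shift into $A_o := c^2 I$, a bounded and hence trivially $A^{1-\varepsilon}$-bounded perturbation. This satisfies \Hi--\Hv\ \emph{verbatim}, boundedness included, with $-A^*$ even enjoying maximal regularity up to $T=\infty$; the restriction to $T<\infty$ in the conclusion then comes solely from the fact that only part (a) of Theorem \ref{Thm-1.2} is applicable, since uniform stability of the closed-loop semigroup fails for arbitrary $w_k, g_k$ — exactly the right place for the restriction to live. Both dictionaries produce the same $\calA = \calA_{_{tr}}$ and the same $A_{_{F,tr}}$, and both invoke the same classical maximal-regularity fact for the adjoint elliptic operator on $L^{q'}(\Omega)$, so the analytic content is identical; yours is simply the cleaner bookkeeping. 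Two further small divergences, both to your credit: for part (i) you read generation off Proposition \ref{prop-1.1} (whose hypotheses you have verified) rather than citing the direct $q=2$ proofs of \cite{LT:1983.1}, \cite{LT:1983.2}, \cite{RT:1980} as the paper does; and you correctly flag that boundedness of the functionals $y \mapsto \ip{y}{w_k}_{L^2(\omega)}$ on $L^q(\Omega)$ requires $w_k \in L^{q'}(\omega)$, which follows from $w_k \in L^2(\omega)$ only when $q \geq 2$ — a point the paper's statement (which allows all $1<q<\infty$) silently skips.
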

	\begin{proof}
		\begin{enumerate}[(i)]
			\item Of course part (ii) implies part (i). But the direct proof of part (i) is more direct. See \cite{LT:1983.1}, \cite{LT:1983.2}, \cite{RT:1980} for $q = 2$. Appropriate modifications yield the desired conclusion a) also for $1 < q < \infty$.
			\item We need to verify assumptions \Hi \ through \Hv \ of Section \ref{Sec-1}, except for boundedness of $\ds e^{-A^*t}$ on $Y^*$ in \Hiii, so that the maximal $L^p$-regularity for the operator $\calA_{_{F,tr}}$ in \eqref{4.10} will hold for $T < \infty$. \Hi \ Since $Y \equiv L^q(\Omega), \ 1 < q < \infty$, assumption \Hi \ holds true. \Hii \ This is a-fortiori true, since $\ds \calA_{_{tr}}$ is the generator of a s.c., analytic semigroup $\ds e^{\calA_{_{tr}} t}$ on $Y \equiv L^q(\Omega), \ t \geq 0$. \Hiii \ $Y \equiv L^q(\Omega), \ 1 < q < \infty$, is reflexive and $\ds Y^* = (L^q(\Omega))^* = L^{q'}(\Omega),\ \rfrac{1}{q} + \rfrac{1}{q'} = 1$. Moreover the operator $\ds \calA_{_{tr}}^*$
			\begin{equation}\label{4.12}
				\calA_{_{tr}}^* \varphi = (\Delta + 2c^2) \varphi,\ L^{q'}(\Omega) \supset \calD \left(\calA_{_{tr}}^* \right) = \left\{ \varphi\in W^{2,q'}(\Omega): \left. \varphi \right|_{\Gamma} = 0 \right\}
			\end{equation}
			\nin is also the generator of a s.c., analytic semigroup $\ds e^{\calA_{_{tr}}^* t}$ on $Y^*, \ t \geq 0$. In addition, it is well-known that $\calA_{_{tr}}^*$ has maximal $L^p$-regularity on $Y^*$: $\ds \calA_{_{tr}}^* \in MReg \left( L^p(0, T; Y^*) \right)$. Thus \Hiii \ holds true (without boundedness). \Hiv \ We take $U = L^q(\Gamma)$. Then \eqref{4.5b} verifies \Hiv \ for $D$ with $\ds \gamma = \rfrac{1}{2q}$. \Hv \ We are actually taking $A_o = 0$ in the present illustration. Thus, \Hi-\Hv \ have been verified and Theorem \hyperref[Thm-1.2]{1.2 a)} yields our present part (ii) of Theorem \ref{Thm-4.1}: maximal $L^p$-regularity up to any $T < \infty$.
		\end{enumerate}
	\end{proof}

	\subsection{Case $T = \infty$. Uniform stabilization of problem (\hyperref[4.3]{4.3a-c}), by boundary feedback control $f = Fy$ as in \eqref{4.2}, for suitable $\ds w_k \in L^2(\omega), \ g_k \in L^q(\Gamma)$.}\label{Sec-4.3}
	
	\nin In the present subsection, under suitable assumptions, we seek to specialize the class of localized interior vectors $w_k \in L^2(\omega)$ and boundary vectors $g_k \in L^q(\Gamma)$, so that the s.c. analytic semigroup $\ds e^{A_{_{F,tr}}t}, \ t \geq 0$, on $Y$, guaranteed by Theorem \hyperref[Thm-4.1]{4.1 (i)}  is, in addition, uniformly stable on $Y$. This goal can be rephrased as a uniform stabilization problem for the open-loop parabolic system (\hyperref[4.1]{4.1a-c}), by virtue of a suitable feedback control $f = Fy$ in \eqref{4.2}, for suitable vectors $\ds \{ w_k, g_k \}_{k = 1}^K$. Moreover, we seek $K$ to be minimal number. 
	A solution of this problem for $q = 2$ and $\omega$ replaced by $\Omega$ was given in \cite[Theorem 2.1D and Theorem 2.4D]{RT:1980:2}.
	
	\begin{rmk}\label{rmk-4.1}
		The vectors $w_k$ are selected from the full rank conditions in \cite[(2.11)]{RT:1980:2} which hold true also with the $L^p(\Omega)$ inner-product in \cite[(2.11)]{RT:1980:2} replaced by the $L^p(\omega)$ inner-product \cite[Claim 3.3, p1458]{BT:2004}, due to the Unique Continuation Theorem in \cite[Lemma 3.7, p1466]{BT:2004}, \cite{RT2:2009}. Instead, the vectors $g_k$ are obtained by showing a moment problem such as \cite[(A.7)]{RT:1980:2}. Once uniform stability of $\ds e^{A_{_{F,tr}}t}$ on $Y$ is achieved, we can then conclude that the maximal $L^p$-regularity of $A_{_{F,tr}}$ can be pushed to $T = \infty$, hence improving (in this special setting) Theorem \ref{Thm-4.1}.
	\end{rmk}

	\begin{thm}\label{Thm-4.2}
		Under the setting of Remark \ref{rmk-4.1} regarding the special choice of the vectors $w_k \in L^2(\omega)$ and $g_k \in L^q(\Gamma)$, the s.c. analytic semigroup $\ds e^{A_{_{F,tr}}t}$ is uniformly stable on $Y$. Hence $\ds A_{_{F,tr}}$ has maximal $L^p$-regularity up to $\ds T = \infty: \ A_{_{F,tr}} \in MReg \left( L^p(0,\infty; Y) \right)$.
	\end{thm}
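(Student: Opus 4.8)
The assertion has two layers, and only the first requires real work. Once I show that the closed-loop semigroup $e^{A_{_{F,tr}}t}$ is uniformly (exponentially) stable on $Y = L^q(\Omega)$, the maximal $L^p$-regularity up to $T = \infty$ follows at once by feeding this stability into Theorem \ref{Thm-1.2}(b) (equivalently \cite[Theorem 5.2, p307]{Dore:2000}), thereby upgrading the $T < \infty$ conclusion of Theorem \ref{Thm-4.1}. So the plan is devoted entirely to the uniform stability, which I would obtain by a spectral decomposition that reduces the problem to finite-dimensional pole assignment.

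First I would use that $\calA_{_{tr}}$ has compact resolvent on $L^q(\Omega)$: its spectrum is a discrete sequence of eigenvalues $\{\lambda_j\}$ with $\Re\,\lambda_j \to -\infty$, and by the choice of the translation $c^2$ only finitely many, say $\lambda_1,\dots,\lambda_M$, satisfy $\Re\,\lambda_j \geq -\delta$ for some $\delta > 0$ isolating them from the rest of the spectrum. Let $\ell = \sum_{i=1}^M \ell_i$ be their total algebraic multiplicity, let $\{\varphi_{ij}\}$ span the unstable generalized eigenspace and $\{\varphi_{ij}^*\}$ the corresponding adjoint eigenspace, and let $P$ be the Riesz projection onto the unstable part. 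This yields the $\calA_{_{tr}}$-invariant splitting $Y = Y_u \oplus Y_s$, with $Y_u = PY$ of dimension $\ell$ and $\sigma(\calA_{_{tr}}|_{Y_s})$ lying strictly to the left of $-\delta$; on $Y_s$ the analytic semigroup already decays like $e^{-\delta t}$, so the entire burden falls on the finite-dimensional unstable block.

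Next I would project the closed-loop generator $A_{_{F,tr}} = \calA_{_{tr}}(I - DF)$ onto $Y_u$. Testing the boundary actuation against the adjoint eigenfunctions through Green's formula converts the unbounded operator $\calA_{_{tr}}D$ into the normal-derivative boundary traces $\left.\tfrac{\partial \varphi_{ij}^*}{\partial \nu}\right|_\Gamma$, so that the feedback $f = Fy = \sum_{k=1}^K \ip{y}{w_k}_{L^2(\omega)}\, g_k$ induces on $Y_u$ a finite-dimensional closed-loop matrix whose control coupling is governed by the two scalar families $\ip{\varphi_{ij}}{w_k}_{L^2(\omega)}$ (observation through $\omega$) and $\ip{\partial \varphi_{ij}^*/\partial \nu}{g_k}_\Gamma$ (boundary actuation). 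The crux is to verify that these data meet the Hautus/Kalman rank conditions for the unstable pair. The full-rank conditions imposed on $\{w_k\}$ in Remark \ref{rmk-4.1} supply the observability/detectability half — and may be imposed over the small set $\omega$ rather than all of $\Omega$ precisely because the Unique Continuation Theorem forbids any unstable eigenfunction from vanishing on the open set $\omega$ — while solvability of the moment problem produces boundary vectors $\{g_k\}$ realizing the required actuation, supplying the controllability half. Standard finite-dimensional stabilization then reassigns the $\ell$ unstable eigenvalues into $\Re\,\lambda < -\delta$.

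The main obstacle I anticipate is the reduction itself rather than any isolated estimate: one must justify the Green's-formula identity for the unbounded $\calA_{_{tr}}D$ (legitimate on $Y_u$, whose elements are smooth eigenfunctions), show that the projected feedback matrix depends on the design data exactly through the two scalar families above, and confirm that the two rank conditions of Remark \ref{rmk-4.1} are precisely the controllability and observability conditions for that matrix. With the unstable block stabilized and the stable block decaying by analyticity, patching the two through the bounded projections $P$ and $I - P$ gives a uniform bound $\norm{e^{A_{_{F,tr}}t}}_{\calL(Y)} \leq M e^{-\delta' t}$ for some $\delta' > 0$, $M \geq 1$. With uniform stability in hand, Theorem \ref{Thm-1.2}(b) delivers $A_{_{F,tr}} \in MReg(L^p(0,\infty;Y))$, which is the claim.
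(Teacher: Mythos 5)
Your proposal follows essentially the same route as the paper: the paper likewise treats uniform stability as the only substantive point and obtains it precisely by the mechanism you sketch --- the finite-dimensional spectral decomposition and pole-assignment construction of \cite[Theorems 2.1D and 2.4D]{RT:1980:2}, with the full-rank (observability) conditions transferred from $\Omega$ to the small set $\omega$ via the Unique Continuation results of \cite{BT:2004}, \cite{RT2:2009}, and the boundary vectors $g_k$ produced by the moment problem \cite[(A.7)]{RT:1980:2} --- and then upgrades Theorem \ref{Thm-4.1} to $T = \infty$ exactly as you do, by feeding the stability into Theorem \ref{Thm-1.2}(b), i.e.\ \cite[Theorem 5.2, p 307]{Dore:2000}. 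The only difference is expository: the paper outsources the entire stability argument to Remark \ref{rmk-4.1} and the cited references rather than rederiving it, so your sketch (Riesz projection, Green's-formula reduction to the normal traces $\partial \varphi_{ij}^*/\partial \nu$, rank/moment conditions, finite-dimensional pole placement, patching of the stable and unstable blocks) is a reconstruction of precisely the classical argument the paper invokes.
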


	\section{Linearization of Navier-Stokes equations with boundary feedback control: maximal $\bs{L}^p$-regularity on $\lso$ and $\Bso$ up to $T = \infty$.}\label{Sec-5}
	
	\subsection{Linearized controlled Navier-Stokes problem.}
	
	\nin \uline{Notation:} Bold notation refers to vector-valued ($d$-valued) quantities and corresponding spaces.\\  
	
	\nin This section is based on paper \cite{LPT.2} and its predecessors \cite{BLT1:2006}, \cite{BLT2:2007}, \cite{LT1:2015}, \cite{LT2:2015} which provides uniform stabilization near an unstable equilibrium solution $\bs{y}_e$ of the Navier-Stokes equations, $d = 2,3$ in closed-loop form, by virtue of a finite-dimensional feedback control pair $\{ \bs{v}, \bs{u} \}$ on $\{ \wti{\Gamma}, \omega \}$. Here see Fig 2, $\wti{\Gamma}$ is an arbitrary small connected portion of the boundary $\Gamma = \partial \Omega$, of a bounded sufficiently smooth domain $\Omega$ in $\BR^d,  \ d = 2,3,$ while $\omega$ is an arbitrarily small collar supported by $\wti{\Gamma}$.
	\begin{center}
		\begin{tabular}{c}
		\begin{tikzpicture}[x=10pt,y=10pt,>=stealth, scale=.53]
		\draw[thick]
		(-15,5)
		.. controls (-20,-9) and (-5,-18) .. (15,-7)
		.. controls  (24,-1) and (15,13).. (6,6)
		.. controls (3,3) and (-3,3) .. (-6,6)
		.. controls (-9,9) and (-13,9) .. (-15,5);
		\draw[thick,shade,opacity=.5]
		(-15.8,-2)
		.. controls (-11,-4) and (-10,-7) .. (-10,-10)
		.. controls (-15,-7) and (-15.5,-4) .. (-15.8,-2);
		\draw[thick] (-15.8,-2) .. controls (-15.5,-4) and (-15,-7) .. (-10,-10);
		\draw[thick] (-14.3,-2.7) .. controls (-14.5,-4) and (-13,-7) .. (-10,-9);
		\draw[thick] (-13.2,-3.4) .. controls (-13.5,-4) and (-12,-7) .. (-10.2,-8);
		
		\draw[->,thin]  (-13.2,-5.9) -- (-18.9,4);
		
		\draw[->,line width = 1pt]  (-14.5,-5.9) -- (-20,4);
		\draw (-21,5) node {$v$};
		
		\draw[thin]  (-13.2,-5.9) -- (-6.5,-1.8);
		\draw(-17,5) node {$\tau(\xi)$};
		\draw(-6.5, -0.5) node {$\xi$};
		
		\draw (-19.25,-7.75) node[scale=1] {$\omega$};
		\draw (-15,-8.5) node[scale=1] {$\wti{\Gamma}$};
		\draw (-15.8,-2) node {$\bullet$};
		\draw (-10,-10) node {$\bullet$};
		\draw[<-]  (-14,-5) -- (-19,-7);
		\end{tikzpicture}\\
		\mbox{Fig 2: Internal localized collar $\omega$ of subportion $\wti{\Gamma}$ of boundary $\Gamma$}
	\end{tabular}
	\end{center}
	\nin The (eventually feedback) boundary control $\bs{v}$ acts tangentially over $\wti{\Gamma}$, while the (eventually feedback) interior control $\bs{u}$ acts ``tangential-like", that is, it acts in the tangential direction $\tau$, parallel to the boundary in the small boundary layer $\omega$. See Fig 2. To this end, a critical intermediary step towards the uniform stabilization of the nonlinear N-S system consists in considering the following linearized problem near the equilibrium solution $\bs{y}_e$, defined in Theorem \ref{Thm-5.1} below:
	\begin{subequations}\label{5.1}
		\begin{empheq}[left=\empheqlbrace]{align}
		\bs{w}_t - \nu_o \Delta \bs{w} + L_e(\bs{w}) + \nabla \chi - (m(x)\bs{u})\tau &= 0  &\text{ in } Q \label{5.1a}\\ 
		\text{div} \ \bs{w} &= 0   &\text{ in } Q \label{5.1b}\\
		\bs{w} &= \bs{v} &\text{ on } \Sigma \label{5.1c}\\
		\bs{w}(0,x) & = \bs{w}_0 (x) &\text{ on } \Omega \label{5.1d}
		\end{empheq}
	\end{subequations}
	\nin Here $m$ is the characteristic function of $\omega: \ m \equiv 1$ on $\omega$, $m \equiv 0$ on $\Omega \backslash \omega$, while $\nu_o > 0$ is the viscosity coefficient. $L_e$ is the first order Oseen perturbation
	\begin{equation}
		L_e(\bs{w}) = (\bs{y}_e \cdot \nabla)\bs{w} + (\bs{w} \cdot \nabla)\bs{y}_e \label{5.2}
	\end{equation} 
	\nin where $\bs{y}_e$ is the equilibrium solution, obtained from the following known result, the basic starting point of the analysis, see \cite[Theorem 5.iii, p58]{AR:2010} for $1 < q < \infty$ and \cite[Theorem 7.3, p59]{CF:1980} for $q = 2$.
	\begin{thm}\label{Thm-5.1}
		Consider the following steady-state Navier-Stokes equations in $\Omega$
		\begin{subequations}\label{5.3}
			\begin{align}
			-\nu_o \Delta \bs{y}_e +  (\bs{y}_e.\nabla)\bs{y}_e + \nabla \pi_e &= \bs{f} &\text{ in }  \Omega \label{5.3a}\\ 
			div \ \bs{y}_e &= 0 &\text{ in }  \Omega\label{5.3b} \\
			\bs{y}_e &= 0 &\text{ on } \Gamma. \label{5.3c}
			\end{align}
		\end{subequations}
		Let $1 < q < \infty$. For any $\bs{f} \in \bs{L}^q(\Omega)$ there exits a solution (not necessarily unique) $(\bs{y}_e,\pi_e) \in (\bs{W}^{2,q}(\Omega) \cap \bs{W}^{1,q}_{0}(\Omega)) \times (W^{1,q}(\Omega) \slash \BR)$.
	\end{thm}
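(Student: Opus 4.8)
The statement is the classical existence theorem for the stationary Navier--Stokes system with no-slip boundary conditions and arbitrary (\emph{not} small) data $\bs{f} \in \bs{L}^q(\Omega)$, so the plan is to use a non-constructive topological fixed-point argument rather than a contraction. First I would reformulate \eqref{5.3} as a fixed-point equation for the \emph{linear} Stokes solution operator. Let $S$ denote the solution map of the stationary Stokes problem $-\nu_o \Delta \bs{y} + \nabla \pi = \bs{F}$, $\div \bs{y} = 0$ in $\Omega$, $\bs{y}|_\Gamma = 0$; by the Agmon--Douglis--Nirenberg and Cattabriga--Solonnikov $L^r$-theory for the Stokes system on a bounded smooth domain, $S : \bs{L}^r(\Omega) \longrightarrow \bs{W}^{2,r}(\Omega) \cap \bs{W}^{1,r}_0(\Omega)$ is bounded for every $1 < r < \infty$, with the pressure recovered in $W^{1,r}(\Omega)/\BR$. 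Then $(\bs{y}_e,\pi_e)$ solves \eqref{5.3} if and only if $\bs{y}_e = \Phi(\bs{y}_e) := S\big(\bs{f} - (\bs{y}_e \cdot \nabla)\bs{y}_e\big)$, with $\pi_e$ the associated Stokes pressure.

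Next I would establish existence of a weak solution in the energy space $\bs{V} = \{\bs{y} \in \bs{H}^1_0(\Omega) : \div \bs{y} = 0\}$. The map $\Phi$ is compact on $\bs{V}$: continuity is routine, and compactness follows from the gain of two derivatives in $S$ composed with the Rellich--Kondrachov theorem. The crucial input is the a priori bound: if $\bs{y}_e = \lambda \Phi(\bs{y}_e)$ for some $\lambda \in [0,1]$, then testing the equation with $\bs{y}_e$ and using the antisymmetry identity $\int_\Omega \big((\bs{y}_e \cdot \nabla)\bs{y}_e\big)\cdot \bs{y}_e = 0$ (valid since $\div \bs{y}_e = 0$ and $\bs{y}_e|_\Gamma = 0$) annihilates the convective term, leaving $\nu_o \norm{\nabla \bs{y}_e}_{L^2}^2 \le \norm{\bs{f}}_{\bs{V}'}\norm{\nabla \bs{y}_e}_{L^2}$, hence a bound on $\norm{\bs{y}_e}_{\bs{V}}$ that is \emph{independent of $\lambda$} and valid for arbitrarily large $\bs{f}$. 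Leray--Schauder degree theory then yields a fixed point, i.e.\ a weak solution $\bs{y}_e \in \bs{V}$. (Equivalently one runs a Galerkin scheme on a Stokes eigenbasis, obtains finite-dimensional solutions by Brouwer via the same estimate, and passes to the limit using Rellich compactness.)

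Having a weak solution $\bs{y}_e \in \bs{H}^1_0$, I would feed the convective term back into the Stokes regularity of $S$ and iterate. In $d = 3$, $\bs{y}_e \in \bs{L}^6$ and $\nabla \bs{y}_e \in \bs{L}^2$ give $(\bs{y}_e \cdot \nabla)\bs{y}_e \in \bs{L}^{3/2}$, so $\bs{y}_e = \Phi(\bs{y}_e) \in \bs{W}^{2,3/2}(\Omega) \hookrightarrow \bs{W}^{1,3}(\Omega)$; a second pass then places the convective term in $\bs{L}^{r}$ for $r$ close to $3$, whence $\bs{y}_e \in \bs{W}^{2,3-\varepsilon}(\Omega) \hookrightarrow \bs{L}^\infty(\Omega) \cap \bs{W}^{1,p}(\Omega)$ for large $p$. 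From this level $(\bs{y}_e \cdot \nabla)\bs{y}_e \in \bs{L}^q(\Omega)$ for the prescribed $q$, and one final application of $S$ gives $\bs{y}_e \in \bs{W}^{2,q}(\Omega) \cap \bs{W}^{1,q}_0(\Omega)$ with $\pi_e \in W^{1,q}(\Omega)/\BR$, as claimed (in $d = 2$ the chain is shorter).

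The existence of a weak solution rests entirely on the trilinear cancellation and is conceptually routine once the Leray--Schauder or Galerkin framework is in place. I expect the genuine technical difficulty to be the regularity bootstrap for \emph{arbitrary} $1 < q < \infty$: one must chain Sobolev embeddings and Stokes $L^r$-estimates through the borderline exponent ($2r = d$ in $d = 3$), choosing intermediate exponents so that each convective term lands in an admissible $\bs{L}^r$ and the iteration terminates exactly at the target $q$. Care is also needed because the solution is not unique, so the bootstrap must be carried out on the fixed point produced by the degree argument rather than assumed a priori, and no contraction/uniqueness shortcut is available for large data; a secondary technical point is the treatment of very small $q$, where $\bs{L}^q(\Omega) \not\hookrightarrow \bs{V}'$ and one must either approximate $\bs{f}$ or interpret \eqref{5.3} directly in the $L^q$ realization before applying the $L^q$-regularity theory.
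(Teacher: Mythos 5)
Regarding comparison with the paper: the paper does not prove Theorem \ref{Thm-5.1} at all --- it is quoted as a known result, with the general case $1<q<\infty$ attributed to \cite[Theorem 5.iii, p 58]{AR:2010} and the case $q=2$ to \cite[Theorem 7.3, p 59]{CF:1980}. Your Leray--Schauder-plus-bootstrap argument is therefore an actual proof where the paper offers only a citation, and it is the classical route (Galerkin/degree theory for a weak solution via the trilinear cancellation, then Cattabriga--Solonnikov $L^r$ Stokes regularity to climb in integrability). Your exponent bookkeeping is correct: $\Phi$ is compact on $\bs{V}$, the a priori bound is $\lambda$-independent, and the chain $\bs{L}^{3/2}\to\bs{W}^{2,3/2}\to\bs{L}^{3-\varepsilon}\to\bs{W}^{2,3-\varepsilon}\to\bs{L}^{\infty}\cap\bs{W}^{1,p}$ ($p$ large) terminates at any prescribed target. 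This proves the statement in $d=2$ for all $1<q<\infty$ and in $d=3$ for all $q\geq 6/5$; in particular it covers the range $q>d$ that the paper actually uses downstream.

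However, the theorem claims all $1<q<\infty$, and in $d=3$ the window $1<q<6/5$ is a genuine gap in your proposal, not the ``secondary technical point'' you label it. There $\bs{L}^q(\Omega)\not\hookrightarrow\bs{V}'$, since $H^1_0(\Omega)\hookrightarrow L^6(\Omega)$ is sharp; hence the pairing $\ip{\bs{f}}{\bs{y}_e}$, the energy estimate, and the whole Leray--Schauder scheme anchored in $\bs{V}$ are unavailable. Worse, the asserted solution class is then genuinely outside the energy class: $\bs{W}^{2,q}(\Omega)\subset\bs{W}^{1,q^*}(\Omega)$ with $q^*=3q/(3-q)<2$ when $q<6/5$, so the object whose existence is claimed need not lie in $\bs{H}^1_0(\Omega)$. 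Neither of your suggested repairs closes this. Approximating $\bs{f}$ (mollification or truncation, $\bs{f}_n\to\bs{f}$ in $\bs{L}^q$) yields solutions $\bs{y}_n$ whose only available uniform-type bound is the energy bound by $\nu_o^{-1}\norm{\bs{f}_n}_{\bs{H}^{-1}(\Omega)}$, and this is \emph{not} controlled by $\norm{\bs{f}}_{\bs{L}^q(\Omega)}$; trying instead to close the $\bs{W}^{2,q}$ Stokes estimate directly fails because the bound on $\norm{(\bs{y}_n\cdot\nabla)\bs{y}_n}_{\bs{L}^q}$ is quadratic in $\norm{\bs{y}_n}_{\bs{W}^{2,q}}$ and can only be absorbed under a smallness hypothesis you do not have. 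And ``interpreting \eqref{5.3} directly in the $L^q$ realization'' is not a reinterpretation of the same scheme: it is precisely the nontrivial very-weak-solution/singular-data theory, which is the reason the paper cites \cite{AR:2010} rather than a textbook result. To make your proof match the full statement, either restrict the claim to $q\geq 6/5$ when $d=3$ (sufficient for everything the paper does with this theorem), or import the existence theory for singular data from \cite{AR:2010} for the remaining window.
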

	\nin \uline{\textbf{Case 1}: The equilibrium solution is unstable.} Instability of the equilibrium solution means that the corresponding Oseen operator $\calA_q$ in \eqref{5.11} below - which depends on $\bs{y}_e$ - has $N$ unstable eigenvalues: $\ds \ldots \leq Re~\lambda_{N+1} < 0 \leq Re~\lambda_N \leq \ldots \leq Re~\lambda_1$. To counteract such instability, \cite{LPT.2} seeks a boundary tangential control $\bs{v}$ acting with support on $\wti{\Gamma}$, and an interior control $\bs{u}$ acting tangential-like on $\omega$, of the preliminary form (for $\calF$ see \cite[Eqt (5.4)]{LPT.1})
	\begin{align}
	\bs{v} &= \sum^K_{k=1} \nu_k(t) \bs{f}_k, \quad \bs{f}_k \in \calF \subset \bs{W}^{2-\rfrac{1}{q},q}(\Gamma), \ q \geq 2,  \mbox{ so that } \bs{f}_k \cdot \nu = 0, \ \text{hence } \bs{v} \cdot \nu = 0 \mbox{ on } \Gamma \label{5.4}\\
	\bs{u} &= \sum_{k=1}^K \mu_k(t) \bs{u}_k, \quad \bs{u}_k \in \bs{W}^u_N \subset \lso, \quad \nu_k(t) = \text{scalar}, \ \mu_k(t) = \text{scalar,}\label{5.5}
	\end{align} 
	\nin -in fact, eventually in feedback from as in \eqref{5.17}, \eqref{5.18}. This will lead to the following boundary feedback closed loop PDE-system:
	
	\begin{subequations}\label{5.6}
		\begin{empheq}[left=\empheqlbrace]{align}
		\bs{w}_t - \nu_o \Delta \bs{w} + L_e(\bs{w}) + \nabla \chi &= m \left( \sum_{k = 1}^{K} \ip{P_N \bs{w}}{\bs{q}_k}_{\bs{W}^u_N} \bs{u}_k \right) \tau  &\text{ in } Q \label{5.6a}\\ 
		\text{div} \ \bs{w} &= 0   &\text{ in } Q \label{5.6b}\\
		\bs{w} &= \sum_{k = 1}^{K} \ip{P_N \bs{w}}{\bs{p}_k}_{\bs{W}^u_N} \bs{f}_k &\text{ on } \Sigma \label{5.6c}\\
		\bs{w}(0,x) & = \bs{w}_0 (x) &\text{ on } \Omega \label{5.6d}
		\end{empheq}
	\end{subequations}
	\nin to be further explained below. Qualitatively, the main result of the present Section \ref{Sec-5} is: \uline{for a suitable explicit selection of the boundary tangential vector $\bs{f}_k$ and interior vectors $\bs{q}_k, \bs{u}_k, \bs{p}_k \in \bs{W}^u_N$ as in \eqref{5.4}, \eqref{5.5} the resulting boundary feedback closed loop system (\hyperref[5.6]{5.6a-b-c-d}) generates a s.c. semigroup, which is analytic, uniformly stable, with generator that has maximal $L^p$-regularity up to $T = \infty$ in a suitable $\bs{L}^q$/Besov setting, $q > d$. to be identified blow}. Moreover, $ K = \max \{ $ geometric multiplicity of $\lambda_i, \ i = 1,\dots, N \}$. For the corresponding formal statements, we refer to Theorems \ref{Thm-5.2}-\ref{Thm-5.4} below. Maximal $L^p$-regularity will be an application of the abstract Theorem \ref{Thm-1.2} as it will be established in the present section. We note that in order to obtain uniform stabilization, and hence maximal $L^p$-regularity up to $T = \infty$, the interior tangential-like feedback control $\bs{u}$ in \eqref{5.5} ultimately acting on $\omega$, \uline{cannot be dispensed with}. This is due to a counter-example \cite{FL:1996} as explained in \cite{LPT.2}. The presence of such $\bs{u}$ is, abstractly, accounted for by the operator $B \in \calL(Y)$ in Theorem \ref{Thm-1.2}, part c). Uniform stabilization of problem (\hyperref[5.6]{5.6a-b-c-d}) rests critically at the outset of the (finite dimensional) analysis on a suitable Unique Continuation Property for a suitably overdetermined adjoint eigenproblem \cite{LT1:2015}, \cite{LT2:2015} to avoid the counterexample of \cite{FL:1996}. Here below, we shall put the PDE problem (\hyperref[5.6]{5.6a-b-c-d}) in the abstract setting of Theorem \ref{Thm-1.2}, part c). To this end, we need some preliminary background.
	
	\subsection{Preliminaries: Helmholtz decomposition}
	
	\begin{definition}\label{Def-5.1}
		Let $1 < q < \infty$ and $\Omega \subset \mathbb{R}^n$ be an open set. We say that the Helmholtz decomposition for $\bs{L}^q(\Omega)$ exists whenever $\bs{L}^q(\Omega)$ can be decomposed into the direct sum of the solenoidal vector space $\lso$ and the space $\bs{G}^q(\Omega)$ of gradient fields
		\begin{subequations}\label{5.7}
			\begin{equation}\label{5.7a}
				\bs{L}^q(\Omega) = \lso \oplus \bs{G}^q(\Omega),
			\end{equation}
			\begin{equation}\label{5.7b}
			\begin{aligned}
				\lso &= \overline{\{\bs{y} \in C_c^{\infty}(\Omega): \div \ \bs{y} = 0 \text{ in } \Omega \}}^{\norm{\cdot}_q}\\
				&= \{ \bs{g} \in \bs{L}^q(\Omega): \div \ \bs{g} = 0; \  \bs{g} \cdot \nu = 0 \text{ on } \partial \Omega \},\\
				& \hspace{3cm} \text{ for any locally Lipschitz domain } \Omega \subset \mathbb{R}^d, d \geq 2 \ \cite[p \ 119]{Ga:2011}\\
				\bs{G}^q(\Omega) &= \{ \bs{y} \in \bs{L}^q(\Omega): \bs{y} = \nabla p, \ p \in W_{loc}^{1,q}(\Omega) \ \text{where } 1 \leq q < \infty \}.
			\end{aligned}
			\end{equation}		
		\end{subequations}		
	\end{definition}
	\nin Both of these are closed subspaces of $\bs{L}^q(\Omega)$. The unique linear, bounded and idempotent (i.e. $P_q^2 = P_q$) projection operator $P_q: \bs{L}^q(\Omega) \longrightarrow \lso$ having $\lso$ as its range and $\bs{G}^q(\Omega)$ as its null space is called the Helmholtz projection. Under the present assumption of smoothness of $\Omega$ ($C^1$-smoothness is enough \cite{Ga:2011}), the Helmholtz projection is known to exist: The Helmholtz decomposition exists for $\bs{L}^q(\Omega)$ if and only if it exists for $\bs{L}^{q'}(\Omega)$, and we have: (adjoint of $P_q$) = $P^*_q = P_{q'}$ (in particular $P_2$ is orthogonal), where $P_q$ is viewed as a bounded operator $\ds \bs{L}^q(\Omega) \longrightarrow \bs{L}^q(\Omega)$, and $\ds P^*_q = P_{q'}$ as a bounded operator $\ds \bs{L}^{q'}(\Omega) \longrightarrow \bs{L}^{q'}(\Omega), \ \rfrac{1}{q} + \rfrac{1}{q'} = 1$.
	
	\subsection{Preliminaries: The Stokes and Oseen operators}
	
	\nin First, for $1 < q < \infty$ fixed, the Stokes operator $A_q$ in $\lso$ with Dirichlet boundary conditions  is defined by
	\begin{subequations}
	\begin{equation}\label{5.8a}
	A_q \bs{z} = -P_q \Delta \bs{z}, \quad
	\mathcal{D}(A_q) = \bs{W}^{2,q}(\Omega) \cap \bs{W}^{1,q}_0(\Omega) \cap \lso.
	\end{equation}
	The operator $A_q$ has a compact inverse $A_q^{-1}$ on $\lso$, hence $A_q$ has a compact resolvent on $\lso$. Moreover, it is well-known that $-A_q$ generates a s.c. analytic Stokes semigroup $e^{-A_qt}$ which is uniformly stable on $\lso$: there exist constants $M \geq 1, \delta > 0$ (possibly depending on $q$) such that 
	\begin{equation}\label{5.8b}
	\norm{e^{-A_qt}}_{\calL(\lso)} \leq M e^{-\delta t}, \ t > 0.
	\end{equation}
	\end{subequations}
	
	\nin It is equally well-known \cite{VAS:2001} that $-A_q$ has maximal $L^p$-regularity on $\lso$ up to $T = \infty$: $\ds -A_q \in MReg (L^p(0,\infty; \lso))$. Next, we recall from \eqref{5.2} the first order Oseen perturbation $L_e$
	\begin{equation}\label{5.9}
	L_e(\bs{z}) = (\bs{y}_e \cdot \nabla) \bs{z} + (\bs{z} \cdot \nabla) \bs{y}_e,
	\end{equation}	
	\noindent and define the first order operator $A_{o,q}$,
	\begin{equation}\label{5.10}
	A_{o,q} \bs{z} = P_q L_e(\bs{z}) = P_q[(\bs{y}_e \cdot \nabla )\bs{z} + (\bs{z} \cdot \nabla )\bs{y}_e], \ \mathcal{D}(A_{o,q}) = \mathcal{D}(A_q^{\rfrac{1}{2}}) = \bs{W}^{1,q}_0 (\Omega) \cap \lso,
	\end{equation}
	\nin Thus, $A_{o,q}A_q^{-\rfrac{1}{2}}$ is a bounded operator on $\lso$, and thus $A_{o,q}$ is bounded on $\ds \calD \big(A_q^{\rfrac{1}{2}} \big)$. This leads to the definition of the Oseen operator
	\begin{equation}\label{5.11}
	\calA_q  = - (\nu_o A_q + A_{o,q}), \quad \calD(\calA_q) = \calD(A_q) \subset \lso
	\end{equation}
	also with compact resolvent. Moreover $\calA_q$ generates a s.c. analytic semigroup $\ds e^{\calA_q t}$ on $\lso, \ t \geq 0$.
		
	\subsection{Preliminaries: Well-posedness in the $L^q$-setting of the non-homogeneous stationary Oseen problem: the Dirichlet map $D:$ boundary $\longrightarrow$ interior.}
	
	\noindent We follow \cite{BLT1:2006}, \cite{LT1:2015}, \cite{LT2:2015}, \cite{LPT.2}. Recalling the first order operator $L_e(\bs{\psi}) = (\bs{\psi} \cdot \nabla) \bs{y}_e + (\bs{y}_e \cdot \nabla) \bs{\psi} $ from \eqref{5.9} and introducing the differential expression $\BA \bs{\psi} = -\nu_0 \Delta \bs{\psi} + L_e(\bs{\psi})$, we consider the stationary, boundary non-homogeneous Oseen problem on $\Omega$: 	
	\begin{subequations}\label{5.12}
		\begin{align}
		\BA \bs{\psi} + \nabla \pi^* &= -\nu_o \Delta \bs{\psi} + L_e(\bs{\psi}) + \nabla \pi^* = 0 \label{5.12a}\\
		\begin{picture}(0,0)
		\put(-35,10){ $\left\{\rule{0pt}{20pt}\right.$}\end{picture}
		\text{ div } \bs{\psi} &= 0 \text{ in } \Omega; \quad \bs{\psi} = \bs{g} \text{ on } \Gamma, \ \bs{g} \cdot \nu = 0 \text{ on } \Gamma. \label{5.12b}
		\end{align}
	\end{subequations}
	\nin Problem \eqref{5.12} may not define a unique solution $\bs{\psi}$; that is, the operator $\bs{g} \to \bs{\psi}$ may have a nontrivial (finite dimensional) null space. To overcome this, one replaces in \eqref{5.12} the differential expression $\BA \bs{\psi} = -\nu_o \Delta \bs{\psi} + L_e (\bs{\psi})$  with its translation $ k +\BA$, for a positive constant $k$, sufficiently large as to obtain a unique solution $\bs{\psi}$. In line with the considerations made in \cite{LPT.2} and also in the name of simplicity of notation, we are here justified to admit henceforth that problem \eqref{5.12} (with $k = 0$) defines a unique solution $\psi$. We shall then denote the ``Dirichlet" map $ \bs{g} \longrightarrow \bs{\psi}$ by $D: \ D \bs{g} = \bs{\psi}$ in the notation of \eqref{5.12}. More precisely, define
	\begin{equation}\label{5.13}
		\bs{U}_q = \big\{ \bs{v} \in \bs{L}^q(\Gamma): \bs{v} \cdot \nu = 0 \text{ on } \Gamma \big\}.
	\end{equation}
	\nin Then with reference to problem \eqref{5.12} we have, recalling \cite[(0.2.17), p XXI]{W:1985}
	\begin{equation}\label{5.14}
		\bs{\psi} = D \bs{v}, \ \bs{v} \in \bs{U}_q \longrightarrow D \bs{g} \in \bs{W}^{\rfrac{1}{q},q}(\Omega) \cap \lso \subset \calD \Big( A_q^{\rfrac{1}{2q} - \varepsilon} \Big)
	\end{equation}
	\begin{equation}\label{5.15}
		\text{or} \quad A_q^{\rfrac{1}{2q} - \varepsilon} D \in \calL(\bs{U}_q, \lso).
	\end{equation}
	
	\subsection{Abstract model of the linearized $\bs{w}$-problem \eqref{5.1}.}
	
	\nin After the above background, we can finally give the abstract model (in additive form) of the linearized $\bs{w}$-problem in \eqref{5.1} in PDE-form still for $1 < q < \infty$. It is given by \cite{LT1:2015}, \cite{LT2:2015}, \cite{LPT.2}
	\begin{equation}\label{5.16}
	\begin{aligned}
	\bs{w}_t - \calA_q \bs{w} + \calA_{ext, q} D \bs{v} - P_q \big[ (m \bs{u}) \tau \big] = 0 &\text{ on } \big[ \calD(\calA_q^*) \big]'\\
	\begin{picture}(0,0)
	\put(-60,8){ $\left\{\rule{0pt}{20pt} \right.$}\end{picture}
	\bs{w}(x,0) = \bs{w}_0(x) = \bs{y}_0(x) - \bs{y}_e &\text{ in } \lso.
	\end{aligned}
	\end{equation}
	\nin In this section, $\calA_{ext, q}$ is the extension of $\calA_q$ in \eqref{5.11} from $\lso \to \big[ \calD(\calA_q^*) \big]'$.\\
	
	\nin \textbf{The operator $\BA_{_{F,q}}$ defining the linearized $\bs{w}$-problem in feedback form.}\\
	
	\nin Paper \cite{LPT.2} constructs \uline{suitable controllers} $\{ \bs{v}, \bs{u} \}$, this time in feedback form and thus going beyond \eqref{5.4}, \eqref{5.5}, with tangential boundary controller $\bs{v}$ supported on $\wti{\Gamma}$, and the tangential-like interior controller $\bs{u}$ supported on $\omega$ of the form		
	\begin{align}
	\bs{v} = F \bs{w} &= \sum_{k=1}^K \big<P_N \bs{w}, \bs{p}_k \big>_{_{\bs{W}^u_N}} \bs{f}_k, \quad \bs{f}_k \in \calF \subset \bs{W}^{2-\rfrac{1}{q},q}(\Gamma), \ \bs{p}_k \in (\bs{W}^u_N)^* \subset \lo{q'}, \ q \geq 2 \nonumber \\ & \hspace{5cm} \ \bs{f}_k \cdot \nu |_{\Gamma} = 0;  \text{ hence } \bs{v} \cdot \nu|_{\Gamma} = 0, \  \bs{f}_k \text{ supported on } \wti{\Gamma} \label{5.17}\\
	\bs{u} = J \bs{w} &= \sum_{k = 1}^K \big< P_N \bs{w}, \bs{q}_k \big>_{_{\bs{W}^u_N}} \bs{u}_k, \quad \bs{q}_k \in (\bs{W}^u_N)^* \subset \lo{q'},\ \bs{u}_k \text{ supported on } \omega. \label{5.18}
	\end{align}
	 Once inserted, this time, in the linear abstract $\bs{w}$-problem \eqref{5.16}, such $\bs{v}$ and $\bs{u}$ in \eqref{5.17}, \eqref{5.18} yield the linearized feedback dynamics driven by the dynamical feedback stabilizing operator $\ds \BA_{_{F,q}}$ below		
	\begin{align}
	\frac{d \bs{w}}{dt} &= \calA_q \bs{w} - \calA_q D \Bigg( \sum_{k = 1}^{K} \big< P_N \bs{w}, \bs{p}_k \big>_{_{\bs{W}^u_N}} \bs{f}_k \Bigg) + P_q \Bigg ( m \Bigg( \sum_{k = 1}^{K} \big< P_N \bs{w}, \bs{q}_k \big>_{_{\bs{W}^u_N}} \bs{u}_k \Bigg) \tau \Bigg) \equiv \BA_{_{F,q}} \bs{w}, \label{5.19}\\
	\frac{d \bs{w}}{dt} &= \calA_q \bs{w} - \calA_q D F \bs{w} + P_q m(J \bs{w}) \equiv \BA_{_{F,q}} \bs{w}. \label{5.20}
	\end{align}	
	\nin Eq \eqref{5.19} is the abstract version of the boundary feedback problem (\hyperref[5.6]{5.6a-d}) in PDE-system.
	\nin More specifically $\ds \BA_{_{F,q}}$ is rewritten as
	\begin{equation}\label{5.21}
	\BA_{_{F,q}} = A_{_{F,q}} + B: \lso \supset \calD \big( \BA_{_{F,q}} \big) \longrightarrow \lso, \ q \geq 2 
	\end{equation}
	\begin{subequations}\label{5.22}
		\begin{align}
		A_{_{F,q}} &= \calA_q (I - DF) \ : \ \lso \supset \calD(A_{_{F,q}}) \longrightarrow \lso, \ q \geq 2 \label{5.22a}\\
		\begin{picture}(0,0)
		\put(-15,9){$\left\{\rule{0pt}{21pt}\right.$}\end{picture}
		\calD(A_{_{F,q}}) &= \big\{ \bs{h} \in \lso: \bs{h} - DF \bs{h} \in \calD(\calA_q) = \bs{W}^{2,q}(\Omega) \cap \bs{W}^{1,q}_0(\Omega) \cap \lso \big\} = \calD(\BA_{_{F,q}}) \label{5.22b}
		\end{align}
	\end{subequations}
		\vspace{-0.8cm}
	\begin{subequations}\label{5.23}
		\begin{multline}
		F(\cdot)  = \sum_{k=1}^K \big< P_N \ \cdot, \bs{p}_k \big>_{_{\bs{W}^u_N}} \bs{f}_k \in \bs{W}^{2-\rfrac{1}{q},q}(\wti{\Gamma}); \\ B(\cdot)  = P_q \bigg( m \bigg( \sum_{k=1}^K \big< P_N \ \cdot, \bs{q}_k \big>_{_{\bs{W}^u_N}} \bs{u}_k \bigg) \tau \bigg) \in \lso \label{5.23a}
		\end{multline}	
		\begin{equation}\label{5.23b}
		F \in \calL(\lso, L^q(\wti{\Gamma})); \quad B \in \calL(\lso), \ q \geq 2 .
		\end{equation}				
	\end{subequations}
	
	\subsection{Application of abstract results of Section \ref{Sec-1} to the linearized Navier-Stokes boundary feedback problem \eqref{5.1} in the abstract form \eqref{5.20}.}
	
	\nin The operator $\BA_{_{F,q}}$ on $\lso, q \geq 2,$ in \eqref{5.22} is of the same form as the abstract operator $A_{_F}$ in \eqref{1.6a} under the following correspondence:
	
	\begin{enumerate}[(1)]
		\item The space $Y$ in \Hi  \ is now $\lso, \ q \geq 2,$ which is a UMD-space. Assumption \Hi \ holds true.
		\item The abstract operator $-A$ in \Hii is now the Stokes operator $-A_q$ in \eqref{5.8a}. As noted, $-A_q$ is the generator of a s.c. analytic semigroup $\ds e^{-A_q t}$ on $Y = \lso$, which moreover is uniformly stable by \eqref{5.8b}. It is equally classical that $-A_q$ has maximal $L^p$-regularity on $\lso$ up to $T = \infty$. So, a-fortiori, \Hii \ holds true \cite{VAS:1968}, \cite{VAS:1977}, \cite{VAS:1981}, \cite{VAS:1996}.
		\item The space $\lso, \ q \geq 2,$ is reflexive. The adjoint operator $-A_q^*$ in the $\lso \longrightarrow \lo{q'}$ duality pairing is given by 
		\begin{equation}\label{5.24}
			A_q^* \bs{f} = -P_{q'} \Delta \bs{f}, \ \calD(A_q^*) = \bs{W}^{2,q'}(\Omega) \cap \bs{W}^{1,q'}_0(\Omega)\cap \lo{q'},
		\end{equation}
		\nin and thus $-A_q^*$ generates a s.c. analytic uniformly stable semigroup $\ds e^{-A_q^* t}$ on $\ds Y^* = (\lso)' = \lo{q'}$. Moreover, such $-A_q^*$ has maximal $L^p$-regularity on $Y^*$ up to $T = \infty$. Thus assumption \Hiii \ holds true.
		\item The abstract Green map $G$ in \Hiv \ is now the Dirichlet map \eqref{5.14} and the abstract Banach space $U$ in \Hiv \ is now $\bs{U}_q$ as defined in \eqref{5.13}. The assumption \eqref{1.1} for $G$ is given by \eqref{5.14}, \eqref{5.15} with constant $\ds \gamma = \rfrac{1}{2q} - \varepsilon$. This way, assumption \Hiv \ holds true.
		\item The abstract operator $A_o$ in \Hv \ is the Oseen perturbation $A_{o,q}$ in \eqref{5.10}. Thus, \eqref{1.2a} holds true with $\ds 1 - \varepsilon = \rfrac{1}{2}$ by \eqref{5.10}. The abstract operator $\calA$ in \eqref{1.3} is the Oseen operator \eqref{5.11}. The operator $F$ in \eqref{1.4} is the operator in \eqref{5.17}.
	\end{enumerate}
	\nin We next recall that \cite{LPT.2} shows that one \uline{can construct explicitly}, vectors $\bs{p}_k, \bs{u}_k, \bs{f}_k, \bs{q}_k$ hence an operator $B$ in (\hyperref[5.23a]{5.23a-b}) such that the operator $\ds \BA_{_{F,q}} = A_{_{F,q}} + B$ generates a s.c. analytic semigroup on $\ds e^{\BA_{_{F,q}} t}$ on $\lso$, which moreover is uniformly stable,
	\begin{equation}\label{5.25}
		\norm{e^{\BA_{_{F,q}} t}}_{\calL(\lso)} \leq C_{\gamma_0} e^{-\gamma_0 t}, \quad t \geq 0, \ q \geq 2,
	\end{equation}
	\nin with decay rate $\ds \gamma_0 = \abs{Re \ \lambda_{N+1}} - \varepsilon, \ \lambda_{N+1}$ being the first unstable eigenvalue of $\calA_q$, see below \eqref{5.3c}. In order to achieve \eqref{5.25}, it is \textit{critical} to use a suitable operator $B$ as in \eqref{5.23a}, i.e. the interior tangential-like control $\bs{u}$, in view of the counter example \cite{FL:1996} to a required Unique Continuation Property even for the Stokes over-determined problem for $d = 2$. More insight is given in \cite{LPT.2}.\\
	
	\nin On the basis of the above considerations, in particular subject to the vectors $\bs{p}_k, \bs{u}_k, \bs{f}_k, \bs{q}_k$ as identified in \cite{LPT.4}, we can apply the abstract Theorem \ref{Thm-1.2} and obtain the next three results.
	
	\begin{thm}\label{Thm-5.2}
		\begin{enumerate}[a)]
			\item The operator $\ds \BA_{_{F,q}} = A_{_{F,q}} + B$ given by \eqref{5.21} has maximal $L^p$-regularity on $\lso$ up to $T = \infty$: $\ds \BA_{_{F,q}} \in MReg \big(\lplqs \big), \ q \geq 2$.
			\item The operator $A_{_{F,q}}$ in \eqref{5.22a} has maximal $L^p$-regularity on $\lso$ up to $T < \infty$:\\ $\ds A_{_{F,q}} \in MReg \big( L^p(0,T;\lso) \big), \ q \geq 2, \ T < \infty$.
		\end{enumerate}
	\end{thm}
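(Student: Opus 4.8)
The plan is to derive both assertions as immediate consequences of the abstract Theorem \ref{Thm-1.2}, once the dictionary recorded in items (1)--(5) above is invoked. Those five items already identify the Navier--Stokes data with the abstract ingredients of Section \ref{Sec-1}: namely $Y = \lso$ (a UMD space, so \Hi\ holds), the negative Stokes operator $-A_q$ in the role of $-A$ (generator of a bounded, in fact uniformly stable, analytic semigroup, with maximal $L^p$-regularity up to $T=\infty$, whence \Hii), its adjoint $-A_q^*$ on the reflexive dual $\lo{q'}$ in the role of $-A^*$ (maximal $L^p$-regularity up to $T=\infty$, whence \Hiii), the Dirichlet map $D$ of \eqref{5.14}--\eqref{5.15} in the role of $G$ with exponent $\gamma = \rfrac{1}{2q} - \varepsilon \in (0,1)$ for $q \geq 2$ and small $\varepsilon$ (whence \Hiv), and finally the Oseen perturbation $A_{o,q}$ as $A_o$, with $\calD(A_{o,q}) = \calD(A_q^{\rfrac{1}{2}})$ so that the abstract exponent is $1-\varepsilon = \rfrac{1}{2}$, together with the Oseen operator $\calA_q$ as $\calA$ and the boundary feedback $F$ of \eqref{5.17} (whence \Hv). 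Thus \Hi--\Hv\ are all in force.

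With the hypotheses verified, part b) is nothing more than Theorem \hyperref[Thm-1.2]{1.2 a)}: since \Hi--\Hv\ hold, the feedback generator $A_{_{F,q}} = \calA_q(I - DF)$ of \eqref{5.22a} has maximal $L^p$-regularity on $\lso$ up to any finite $T$, that is $A_{_{F,q}} \in MReg\big(L^p(0,T;\lso)\big)$. For part a) I would invoke Theorem \hyperref[Thm-1.2]{1.2 c)}: the additional bounded operator $B \in \calL(\lso)$ of \eqref{5.23a}--\eqref{5.23b}, built from the interior tangential-like controller $\bs{u}$, is precisely the $B$ of \eqref{1.12}, and the decisive hypothesis of part c)---that $\BA_{_{F,q}} = A_{_{F,q}} + B$ generate a \emph{uniformly stable} analytic semigroup---is supplied verbatim by the stabilization estimate \eqref{5.25} established in \cite{LPT.2}. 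Hence $\BA_{_{F,q}} \in MReg\big(\lplqs\big)$ for $q \geq 2$. The generation and analyticity of $e^{\BA_{_{F,q}}t}$ needed to make sense of \eqref{5.25} are themselves furnished by Proposition \ref{prop-1.1} applied to $A_{_{F,q}}$, perturbed by the bounded $B$.

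The only genuinely delicate input is thus external to the present deduction: the uniform stability \eqref{5.25}, which cannot be secured with $B=0$. This is exactly why part a) can assert $T=\infty$ only for the perturbed operator $\BA_{_{F,q}}$, whereas part b) must content itself with $T < \infty$ for the unperturbed $A_{_{F,q}}$. This is the hard point, and it is where the Unique Continuation Property for the overdetermined adjoint Oseen eigenproblem enters, precisely in order to circumvent the counterexample of \cite{FL:1996}; it is the reason the interior control $\bs{u}$, i.e. the operator $B$, \uline{cannot be dispensed with}. Within the present section, however, \eqref{5.25} is imported as a known fact, so the proof reduces to the bookkeeping of the correspondence above followed by the two citations of Theorem \ref{Thm-1.2}.
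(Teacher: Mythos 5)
Your proposal is correct and follows essentially the same route as the paper: verify \Hi--\Hv\ via the correspondence (1)--(5) of Section \ref{Sec-5}, obtain part b) from Theorem \hyperref[Thm-1.2]{1.2 a)}, and obtain part a) from Theorem \hyperref[Thm-1.2]{1.2 c)} by importing the uniform stability estimate \eqref{5.25} for $\BA_{_{F,q}} = A_{_{F,q}} + B$ from \cite{LPT.2}. Your closing remark correctly identifies that the only non-routine ingredient, the stabilization \eqref{5.25} (resting on the Unique Continuation Property to avoid the counterexample of \cite{FL:1996}), is exactly what the paper also treats as an external input rather than something proved in this section.
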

	\nin A companion result, established in \cite[Theorem 11.4]{LPT.2} describes the action of semigroup $\ds e^{\BA_{_{F,q}}t}$ on the subspace 
	\begin{subequations}\label{5.26}
		\begin{align}
		\Bto &=  \left\{ \bs{g} \in \Bso : \text{ div } \bs{g} = 0, \ \bs{g} \cdot \nu|_{\Gamma} = 0 \right\} \label{5.26a}\\
		&= \Bso \cap \lso, \quad 1 < p < \frac{2q}{2q-1} \label{5.26b}
		\end{align}
	\end{subequations}
	\nin of the Besov space
	\begin{equation}
		\Bso = \left( \bs{L}^q (\Omega), \bs{W}^{2,q}(\Omega) \right)_{1-\frac{1}{p},p} \hspace{3cm}
	\end{equation}
	\nin defined as a real interpolation space, as a specialization of the general formula
	\begin{equation}
		\bs{B}^s_{q,p}(\Omega) = \left( \bs{L}^q (\Omega), \bs{W}^{m,q}(\Omega) \right)_{\frac{s}{m},p} \hspace{3cm}
	\end{equation}
	\nin for $m = 2, s = \rfrac{2}{p}$.\\
	\begin{thm}{\cite[Theorem 11.4]{LPT.2}}\label{Thm-5.3} Consider now the original s.c. analytic  feedback semigroup $e^{\BA_{_{F,q}}t}$ on $\lso$, which is uniformly stable here by \eqref{5.25}. Let $\ds 1 < p < \rfrac{2q}{2q-1}, \ q \geq 2$. Then,
		\begin{align}
		e^{\BA_{_{F,q}}t}&: \text{ continuous } \Bto = \lqafq = \lqaq \label{5.29}\\
		&\longrightarrow \xipqs = L^p \big(0, \infty ; \bs{W}^{2,q}(\Omega) \big) \cap W^{1,p} \big( 0, \infty; \lso \big). \label{5.30}
		\end{align}
	\end{thm}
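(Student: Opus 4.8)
The statement has two components: the identification of the initial-data space, $\Bto = \lqafq = \lqaq$, and the continuity of the homogeneous flow map from that space into the maximal-regularity class $\xipqs$. The plan is to deduce flow-map continuity from the maximal $L^p$-regularity of $\BA_{_{F,q}}$ already granted by Theorem \ref{Thm-5.2}a), and to obtain the triple identification from the classical trace characterization of real interpolation spaces together with the low-smoothness threshold encoded in $1 < p < \rfrac{2q}{2q-1}$. First I would record the smoothness bookkeeping: the constraint $1 < p < \rfrac{2q}{2q-1}$ is exactly $2 - \rfrac{2}{p} < \rfrac{1}{q}$, i.e. the Besov order $s = 2 - \rfrac{2}{p}$ of $\Bso$ lies strictly below the Dirichlet trace threshold $\rfrac{1}{q}$. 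Hence, for any closed subspace of $\bs{W}^{2,q}(\Omega) \cap \lso$ cut out by a boundary (trace) condition, that condition is invisible to real interpolation at the level $1 - \rfrac{1}{p}$. Applying this first to the Stokes domain $\calD(A_q) = \bs{W}^{2,q}(\Omega) \cap \bs{W}^{1,q}_0(\Omega) \cap \lso$, whose defining extra condition is the vanishing Dirichlet trace, yields $\lqaq = \Bto$, the surviving constraints being only $\div\,\bs{g} = 0$ and $\bs{g}\cdot\nu|_{\Gamma} = 0$, which are already built into $\lso$; this uses the interpolation formula $\Bso = \left( \bs{L}^q(\Omega), \bs{W}^{2,q}(\Omega)\right)_{1-\frac{1}{p},p}$ restricted to solenoidal fields.

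Next I would treat the feedback domain. Since $B \in \calL(\lso)$ is bounded, $\calD(\BA_{_{F,q}}) = \calD(A_{_{F,q}})$ and the bounded summand does not alter the interpolation space, so $\lqafq = \big(\lso, \calD(A_{_{F,q}})\big)_{1-\frac{1}{p},p}$. The key structural fact is that $\calD(A_{_{F,q}}) \hookrightarrow \bs{W}^{2,q}(\Omega) \cap \lso$ continuously: for $\bs{h} \in \calD(A_{_{F,q}})$ one splits $\bs{h} = (I - DF)\bs{h} + DF\bs{h}$, where $(I - DF)\bs{h} \in \calD(\calA_q) \subset \bs{W}^{2,q}(\Omega)$ by \eqref{5.22b}, while $DF\bs{h}$ has finite-dimensional range spanned by $\{ D\bs{f}_k \}$ with $\bs{f}_k \in \bs{W}^{2-\rfrac{1}{q},q}(\Gamma)$, whence $D\bs{f}_k \in \bs{W}^{2,q}(\Omega) \cap \lso$ by elliptic regularity of the Oseen–Dirichlet map $D$. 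Thus $\calD(A_{_{F,q}})$ is again a closed subspace of $\bs{W}^{2,q}(\Omega) \cap \lso$ carved out by the single trace condition $\left.(I - DF)\bs{h}\right|_{\Gamma} = 0$; invoking once more the sub-threshold property $s < \rfrac{1}{q}$ gives $\lqafq = \Bto$, so that $\Bto = \lqafq = \lqaq$.

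For the flow map I would invoke the abstract trace theorem for maximal regularity. Since $\BA_{_{F,q}} \in MReg(\lplqs)$ by Theorem \ref{Thm-5.2}a) and the semigroup is exponentially stable by \eqref{5.25}, the orbit $t \mapsto e^{\BA_{_{F,q}}t}\bs{w}_0$ solving $\bs{w}_t = \BA_{_{F,q}}\bs{w}$, $\bs{w}(0) = \bs{w}_0$, lies in $L^p(0,\infty;\calD(\BA_{_{F,q}})) \cap W^{1,p}(0,\infty;\lso)$ with norm equivalent to $\norm{\bs{w}_0}_{\lqafq}$, the exponential decay guaranteeing finiteness of the $L^p(0,\infty)$ norms (this is where $T=\infty$, rather than $T<\infty$, requires the uniform stability). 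Composing with the continuous embedding $\calD(\BA_{_{F,q}}) = \calD(A_{_{F,q}}) \hookrightarrow \bs{W}^{2,q}(\Omega)$ from the previous step replaces the graph-norm target by $\xipqs$, yielding the asserted continuity from $\Bto$ into $\xipqs$.

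I expect the main obstacle to be the middle step, the identification $\lqafq = \lqaq$: the feedback boundary condition $\left.(I-DF)\bs{h}\right|_{\Gamma} = 0$ is genuinely distinct from the homogeneous Stokes Dirichlet condition, and the equality is not a soft bounded-perturbation argument but hinges precisely on the sub-trace-threshold smoothness $2 - \rfrac{2}{p} < \rfrac{1}{q}$ that renders both boundary conditions invisible to the real interpolation functor. The concrete regularity input making the threshold argument applicable is that the Oseen–Dirichlet map $D$ sends the finite-dimensional boundary data $\bs{f}_k \in \bs{W}^{2-\rfrac{1}{q},q}(\Gamma)$ into $\bs{W}^{2,q}(\Omega)$, so that $\calD(A_{_{F,q}})$ is indeed a subspace of $\bs{W}^{2,q}(\Omega)$ and not merely of some lower-order space dictated by the generic Green-map bound \eqref{5.14}.
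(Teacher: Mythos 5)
First, a remark on the ground truth: the paper does not prove Theorem \ref{Thm-5.3} at all. It is imported verbatim from \cite[Theorem 11.4]{LPT.2}, so your proposal cannot be compared with an in-paper argument; it can only be measured against what the paper's own machinery (Theorem \ref{Thm-5.2} via the abstract Theorem \ref{Thm-1.2}, plus the stability \eqref{5.25}) actually delivers. Within that frame, your architecture is the natural one, and several of your steps are correct and essential: the arithmetic $1<p<\rfrac{2q}{2q-1}\iff 2-\rfrac{2}{p}<\rfrac{1}{q}$; the reduction $\calD(\BA_{_{F,q}})=\calD(A_{_{F,q}})$ with equivalent graph norms; the continuous embedding $\calD(A_{_{F,q}})\hookrightarrow \bs{W}^{2,q}(\Omega)\cap\lso$ via the splitting $\bs{h}=(I-DF)\bs{h}+DF\bs{h}$, the finite rank of $F$, and elliptic regularity of $D$ on the data $\bs{f}_k\in\bs{W}^{2-\rfrac{1}{q},q}(\Gamma)$ of \eqref{5.17} (correctly noted to be stronger than the generic bound \eqref{5.14}); and the passage from maximal regularity plus uniform stability to continuity of the orbit map $\lqafq\to\xipqs$ by the trace method.

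The gap is precisely where you yourself locate the main difficulty, and it is not closed: the inclusion $\Bto\hookrightarrow\lqafq$. Your justification is to ``invoke once more the sub-threshold property,'' i.e.\ to treat ``boundary conditions of smoothness below $\rfrac{1}{q}$ are invisible to real interpolation'' as a general principle valid for \emph{any} closed subspace of $\bs{W}^{2,q}(\Omega)\cap\lso$ cut out by a trace-type condition. That principle is a theorem only for the concrete couples treated in the literature (homogeneous Dirichlet conditions, yielding $\lqaq=\Bto$); it is not a citable statement for the feedback domain, whose defining condition $\bs{h}|_{\Gamma}=F\bs{h}$ is nonlocal (it couples the boundary trace of $\bs{h}$ to its interior values through $P_N$) and carves out a subspace of \emph{infinite} codimension in $\bs{W}^{2,q}(\Omega)\cap\lso$, so that Grisvard-type subspace-interpolation lemmas (finitely many functionals) do not apply either. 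Note also that the structural fact you do prove, $\calD(A_{_{F,q}})\hookrightarrow\bs{W}^{2,q}(\Omega)\cap\lso$, yields by interpolation only the \emph{easy} inclusion $\lqafq\hookrightarrow\Bto$; and, as you correctly observe, no soft bounded-perturbation argument can give the converse (a naive K-functional comparison produces an error term $\norm{\bs{h}}_{\lso}$ whose weighted $L^p(dt/t)$ norm diverges at $t=0$). So the crux of the theorem is asserted rather than proved.

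The gap can in fact be closed with tools already in the paper, in the spirit of your finite-rank observation. For $\lambda\in\rho(\calA_q)\cap\rho(A_{_{F,q}})$, $\Re\lambda$ large, the perturbation formula \eqref{2.1} reads $R(\lambda,A_{_{F,q}})=S_\lambda^{-1}R(\lambda,\calA_q)$ with $S_\lambda:=I+R(\lambda,\calA_q)\calA_{ext,q}DF$. Thus $S_\lambda$ is a bounded isomorphism of $\lso$ carrying $\calD(A_{_{F,q}})$ onto $\calD(\calA_q)$ with equivalence of graph norms, hence a couple isomorphism, so that $S_\lambda\big(\lqafq\big)=\lqaq=\Bto$, the last identity being the one genuinely citable interpolation fact. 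Moreover $S_\lambda-I$ is finite rank with range in the finite-dimensional space $W=\text{span}\{D\bs{f}_k,\,R(\lambda,\calA_q)D\bs{f}_k\}\subset\bs{W}^{2,q}(\Omega)\cap\lso\subset\Bto$, by the identity $R(\lambda,\calA_q)\calA_{ext,q}D\bs{f}_k=\lambda R(\lambda,\calA_q)D\bs{f}_k-D\bs{f}_k$ together with your elliptic-regularity point $D\bs{f}_k\in\bs{W}^{2,q}(\Omega)$; since $S_\lambda W\subset W$ and $S_\lambda$ is injective, $S_\lambda^{-1}W=W$, so both $S_\lambda$ and $S_\lambda^{-1}$ differ from the identity by finite-rank operators bounded on $\lso$ with ranges in $\Bto$, and therefore leave $\Bto$ invariant. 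Combining, $\lqafq=S_\lambda^{-1}(\Bto)=\Bto$. With this substituted for the ``invisibility'' appeal, the rest of your argument goes through and yields the theorem.
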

	\nin \uline{\textbf{Case 2}:} The literature reports physical situations where the volumetric force $f$ in \eqref{5.3a}, is actually replaced by $\nabla g(x)$; that is, $f$ is a conservative vector field. In this case, a solution to the stationary problem \eqref{5.3} is: $ \bs{y}_e \equiv 0, \pi_e = g $. Taking $ \bs{y}_e \equiv 0 $ (hence $L_e(\cdot) = 0$ by \eqref{5.2}) one obtains $A_{o,q} = 0, \ \calA_q = -A_q$ and the linearized $w$-equation \eqref{5.16} specializes to 
	\begin{equation}\label{5.31}
		\bs{\eta}_t + \nu_o A_q (\bs{\eta} - D \bs{v}) = P_q (m\bs{u}) \quad \text{in } \lso.
	\end{equation}
	\nin In this case, as discussed in \cite{LPT.2}, we can \uline{enhance at will the uniform stability} of the corresponding problem by the use \uline{only} of the tangential feedback finite dimensional control $\bs{v}$, as acting on the \uline{entire} boundary $\Gamma$. Thus we can take $\bs{u} \equiv 0$ in this case. With boundary feedback operator $F$ as in \eqref{5.17} except as acting now on the whole boundary $\Gamma$, the resulting, feedback operator is $(\nu_o = 1)$
	\begin{align}
		A_{_{F,q}} &= -A_q(I - DF) \label{5.32}\\
		F \cdot &= \sum_{k=1}^K \big<P_N \ \cdot \ , \bs{p}_k\big>_{_{\bs{W}^u_N}} \bs{f}_k, \quad \bs{f}_k \in \calF \subset \bs{W}^{2-\rfrac{1}{q},q}(\Gamma), \label{5.33}
	\end{align}
	\nin The corresponding closed-loop feedback system in PDE-form is
	\begin{subequations}
		\begin{empheq}[left=\empheqlbrace]{align}
			\bs{\eta}_t - \nu_o \Delta \bs{\eta} + \nabla \pi &= 0 &\text{ in } Q \label{5.34a}\\
			\text{ div } \bs{\eta} &= 0 &\text{ in } Q \label{5.34b}\\
			\left. \bs{\eta} \right|_{\Gamma} = F \bs{\eta} &= \sum_{k=1}^K \big<P_N \bs{\eta}, \bs{p}_k\big>_{_{\bs{W}^u_N}} \bs{f}_k &\text{ in } \Sigma \label{5.34c}
		\end{empheq}
	\end{subequations}
	\nin On the basis of the above considerations we obtain
	\begin{thm}\label{Thm-5.4}
		Let $\bs{y}_e = 0$. One can select the vectors $\bs{p}_k, \bs{f}_k$ in \eqref{5.33} so that the feedback operator $A_{_{F,q}}$ in \eqref{5.32} is the generator of a s.c. analytic semigroup $\ds e^{A_{_F}t}$ on $\lso$, which moreover has an arbitrary preassigned decay rate 
		\begin{equation}\label{5.35}
			\norm{e^{\BA_{_F}t}}_{\calL(\lso)} \leq M_r e^{-r t}, \quad t \geq 0.
		\end{equation}
		$r > 0$, preassigned. Finally, $A_{_F}$ has maximal $L^p$-regularity on $\lso, \ q \geq 0,$ up to $T = \infty$: $\ds A_{_F} \in MReg \big(\lplqs \big)$.
	\end{thm}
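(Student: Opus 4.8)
The plan is to realize Theorem \ref{Thm-5.4} as a direct specialization of the abstract Theorem \ref{Thm-1.2}, part b), with the bounded perturbation $B$ set to zero. First I would record the simplification forced by $\bs{y}_e = 0$: by \eqref{5.2} the Oseen term vanishes, $L_e(\cdot) = 0$, whence $A_{o,q} = 0$ in \eqref{5.10} and $\calA_q = -A_q$ in \eqref{5.11}. Consequently the feedback generator \eqref{5.32} reads $A_{_{F,q}} = -A_q(I - DF)$, which is precisely the abstract operator $A_{_F} = \calA(I - GF)$ of \eqref{1.6a} under the identifications $\calA = -A_q$, $A_o = 0$, $G = D$ (the Dirichlet map of \eqref{5.14}), and $F$ the boundary feedback of \eqref{5.33}, now supported on all of $\Gamma$.

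Next I would verify the standing assumptions \Hi--\Hv, most of which are already checked in items (1)--(5) above and only simplify here. \Hi\ holds since $Y = \lso$ is UMD for $q \geq 2$. For \Hii\ I take $-A = -A_q$, which by \eqref{5.8b} generates a uniformly stable, hence bounded, analytic semigroup. The decisive point is \Hiii: because $-A_q$ is uniformly stable, so is its adjoint $-A_q^*$ in \eqref{5.24}, and the classical maximal $L^p$-regularity of the Stokes operator then holds up to $T = \infty$ together with the required boundedness of $e^{-A_q^* t}$. Assumption \Hiv\ is supplied by \eqref{5.14}--\eqref{5.15} with $\gamma = \rfrac{1}{2q} - \varepsilon$, and \Hv\ is trivial since $A_o = 0$. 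With these in hand, Proposition \ref{prop-1.1} yields generation of the s.c. analytic semigroup $e^{A_{_{F,q}}t}$ on $\lso$, and Theorem \ref{Thm-1.2}a) already delivers maximal $L^p$-regularity up to any $T < \infty$.

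The substantive step, and the one I expect to be the main obstacle, is the construction of vectors $\bs{p}_k, \bs{f}_k$ in \eqref{5.33} achieving the arbitrary preassigned decay rate \eqref{5.35}. I would invoke the finite-dimensional spectral-assignment procedure of \cite{LPT.2}: projecting via $P_N$ onto the (now possibly enlarged) slow subspace $\bs{W}^u_N$ determined by the target rate $r$, one must show that the boundary vectors $\bs{f}_k$ together with the functionals $\bs{p}_k$ satisfy the full-rank/controllability conditions allowing the projected finite-dimensional system to be assigned spectrum in $\{ \Re \lambda < -r \}$, while the complementary part already decays faster than $r$. The feature making this possible using the boundary control $\bs{v}$ \emph{alone} (so that one may legitimately take $B = 0$, unlike in Theorem \ref{Thm-5.2}) is that with $\bs{y}_e = 0$ the associated over-determined adjoint eigenproblem enjoys the requisite Unique Continuation Property on the whole of $\Gamma$, circumventing the counterexample of \cite{FL:1996} that otherwise forces the interior control. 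This produces the uniform stability \eqref{5.35} for arbitrary $r > 0$.

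Finally, with uniform stability of $e^{A_{_{F,q}}t}$ established, I would close the argument by applying Theorem \ref{Thm-1.2}b) directly: since no bounded perturbation is present, part c) is not required. This upgrades the maximal $L^p$-regularity from $T < \infty$ to $T = \infty$, yielding $A_{_{F}} \in MReg\big(\lplqs\big)$ and completing the proof.
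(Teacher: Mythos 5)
Your proposal is correct and takes essentially the same route as the paper: the paper obtains Theorem \ref{Thm-5.4} exactly by specializing the verification of \Hi--\Hv\ to the case $\calA_q = -A_q$, $A_{o,q} = 0$, $G = D$, deferring the explicit selection of the vectors $\bs{p}_k, \bs{f}_k$ and the resulting arbitrary preassigned decay rate \eqref{5.35} to \cite{LPT.2}, and then invoking Theorem \ref{Thm-1.2}b) (with $B = 0$, so part c) is not needed) to upgrade maximal $L^p$-regularity from $T < \infty$ to $T = \infty$. The only minor imprecision is your gloss on why boundary feedback alone suffices here: the operative point, per the paper and \cite{LPT.2}, is that in this case the tangential control acts on the \emph{entire} boundary $\Gamma$ (the counterexample of \cite{FL:1996} obstructing a Unique Continuation Property concerns the Stokes problem with over-determination on a portion $\wti{\Gamma}$ only), a justification which both you and the paper ultimately defer to \cite{LPT.2}.
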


	\section{Linearization of the Boussinesq system with finite dimensional boundary feedback control: maximal $L^p$-regularity on $\bls \times L^q(\Omega)$ up to $T = \infty$.}\label{Sec-6}	
	
	\nin This section is based on paper \cite{LPT.4} which provides uniform stabilization near an unstable equilibrium solution $\bs{y}_e$ of the nonlinear Boussinesq system $d = 2, 3$ in closed-loop form, by virtue of a pair of finite-dimensional feedback controls $\{ v, \bs{u} \}$ acting on $\{ \wti{\Gamma}, \omega \}$. Here, see Fig 2, except that $\bs{u}$ is \uline{not} tangential-like in the present section, $\wti{\Gamma}$ is an arbitrary small connected position of the boundary $\ds \Gamma = \partial \Omega$ of a bounded, sufficiently smooth domain $\Omega$ in $\BR^d,\ d = 2,3,$ while $\omega$ is an arbitrary small collar supported by $\wti{\Gamma}$. To this end, a critical intermediary step - of interest to the present paper - consists in studying the following linearized Boussinesq system in PDE form near $\bs{y}_e$ in the variable $\ds \bs{w} = \{ \bs{w}_f, w_h \} \in \bls \times L^q(\Omega) \equiv \bs{W}^q_{\sigma}(\Omega)$:	
	

		

	\begin{subequations}\label{6.1}
		\begin{empheq}[left=\empheqlbrace]{align}
		\frac{d \bs{w}_f}{dt} - \nu \Delta \bs{w}_f + L_e(\bs{w}_f) - \gamma w_h \bs{e}_d + \nabla \chi &= m \bs{u}   &\text{ in } Q \label{6.1a}\\
		\frac{dw_h}{dt} - \kappa \Delta w_h + \bs{y}_e \cdot \nabla w_h + \bs{w}_f \cdot \nabla \theta_e &= 0 &\text{ in } Q \label{6.1b}\\
		\text{div } \bs{w}_f &= 0   &\text{ in } Q \label{6.1c}\\
		\bs{w}_f \equiv 0, \ w_h &\equiv v &\text{ on } \Sigma \label{6.1d}\\
		\bs{w}_f(0,\cdot) = \bs{w}_{f,0}; \quad w_h(0,\cdot) & = w_{h,0} &\text{ on } \Omega. \label{6.1e}
		\end{empheq}
	\end{subequations}	
	\nin with I.C. $\ds \{ \bs{w}_f(0), w_h(0) \} \in \bs{W}^q_{\sigma}(\Omega) \equiv \bls \times L^q(\Omega)$. Here, as in Section \ref{Sec-5}, $m$ is the characteristic function of $\omega: \ m \equiv 1$ on $\omega$, $m \equiv 0$ on $\Omega \backslash \omega$, while the first order Oseen perturbation $L_e$ is defined in \eqref{5.2}. The term $\bs{e}_d$ denotes the vector $(0,\dots,0,1)$, while $\kappa, \nu$ are physical constants. The original nonlinear Boussinesq system  models heat transfer in a viscous incompressible heat conducting fluid. It consists of  the Navier-Stokes equations (in the velocity vector) coupled with the convection-diffusion equation (for the scalar temperature). The equilibrium solution $\bs{y}_e$ is obtained from the following result, the basic starting point of our analysis. 
	\begin{thm} \label{Thm-6.1}
		Consider the following steady-state Boussinesq system in $\Omega$
		\begin{subequations}\label{6.2}
			\begin{empheq}[left=\empheqlbrace]{align}
			- \nu \Delta \mathbf{y}_e + (\mathbf{y}_e \cdot \nabla) \mathbf{y}_e - \gamma (\theta_e - \bar{\theta}) \mathbf{e}_d+ \nabla \pi_e &= \mathbf{f}(x)   &\text{in } \Omega \label{6.2a}\\
			-\kappa \Delta \theta_e + \mathbf{y}_e \cdot \nabla \theta_e &= g(x) &\text{in } \Omega \label{6.2b}\\
			\text{div } \mathbf{y}_e &= 0   &\text{in } \Omega \label{6.2c}\\
			\mathbf{y}_e = 0, \ \theta_e &= 0 &\text{on } \partial \Omega. \label{6.2d}
			\end{empheq}
		\end{subequations}
		Let $1 < q < \infty$. For any $\mathbf{f},g \in \mathbf{L}^q(\Omega), L^q(\Omega)$, there exists a solution (not necessarily unique) $(\mathbf{y}_e,\theta_e, \pi_e) \in (\mathbf{W}^{2,q}(\Omega) \cap \mathbf{W}^{1,q}_{0}(\Omega)) \times (W^{2,q}(\Omega) \cap W^{1,q}_{0}(\Omega)) \times (W^{1,q}(\Omega)/\mathbb{R})$.
	\end{thm}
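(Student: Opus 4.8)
The plan is to establish existence by a Leray--Schauder fixed-point argument combined with $L^q$-elliptic (Stokes and scalar) regularity and a bootstrap in the integrability exponent; the scheme parallels the steady Navier--Stokes existence of Theorem \ref{Thm-5.1}, now with the temperature equation and the buoyancy coupling adjoined. I would not attempt uniqueness, since the statement explicitly allows several equilibria; this frees the argument from any smallness restriction on $\mathbf{f}, g$ and lets me rely purely on compactness together with a priori bounds.

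First I would decouple the system into two linear solves driven by a frozen argument. Given a pair $(\tilde{\mathbf{y}},\tilde\theta)$ in an intermediate space (say $\lso\times L^q(\Omega)$), define $(\mathbf{y},\theta)=\mathcal{T}(\tilde{\mathbf{y}},\tilde\theta)$ by solving the scalar Dirichlet problem $-\kappa\Delta\theta = g-\tilde{\mathbf{y}}\cdot\nabla\tilde\theta$ in $\Omega$, $\theta|_\Gamma=0$, and the Stokes problem $-\nu\Delta\mathbf{y}+\nabla\pi=\mathbf{f}+\gamma(\tilde\theta-\bar\theta)\mathbf{e}_d-(\tilde{\mathbf{y}}\cdot\nabla)\tilde{\mathbf{y}}$, $\mathrm{div}\,\mathbf{y}=0$, $\mathbf{y}|_\Gamma=0$. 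Both principal operators are the fixed, invertible Dirichlet Laplacian and Stokes operator, so each solve is unconditionally well posed; the convective and coupling nonlinearities have been pushed entirely to the right-hand sides. A fixed point $\mathcal{T}(\mathbf{y}_e,\theta_e)=(\mathbf{y}_e,\theta_e)$ is exactly a solution of \eqref{6.2}, with the pressure recovered at the end.

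Continuity and compactness of $\mathcal{T}$ follow from $L^q$-elliptic theory on the smooth bounded domain $\Omega$: the scalar Laplacian and the Stokes operator map $L^q$ data boundedly into $W^{2,q}\cap W^{1,q}_0$ and $\mathbf{W}^{2,q}\cap\mathbf{W}^{1,q}_0$ respectively, while the embedding $\mathbf{W}^{2,q}\hookrightarrow\mathbf{W}^{1,q}$ is compact. The one point needing care is that the quadratic right-hand sides $(\tilde{\mathbf{y}}\cdot\nabla)\tilde{\mathbf{y}}$ and $\tilde{\mathbf{y}}\cdot\nabla\tilde\theta$ actually lie in $L^q$; for $q\geq 2$ (more generally once the iterate sits in $\mathbf{W}^{2,q}$ with $2q>d$, hence $\mathbf{W}^{2,q}\hookrightarrow L^\infty$) this is immediate from $\|\mathbf{z}\cdot\nabla\phi\|_{L^q}\leq\|\mathbf{z}\|_{L^\infty}\|\nabla\phi\|_{L^q}$, and for the remaining small-$q$ range in $d=3$ I would first construct a weak solution at the Hilbert level and then bootstrap, as described next.

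The hard part, as always for stationary Navier--Stokes-type systems, is the \emph{a priori estimate} uniform along the Leray--Schauder homotopy $(\mathbf{y},\theta)=\lambda\,\mathcal{T}(\mathbf{y},\theta)$, $\lambda\in[0,1]$. Here I would test the momentum equation with $\mathbf{y}_e$ and the temperature equation with $\theta_e$: the divergence-free constraint together with the homogeneous boundary conditions forces the convective terms $\langle(\mathbf{y}_e\cdot\nabla)\mathbf{y}_e,\mathbf{y}_e\rangle$ and $\langle\mathbf{y}_e\cdot\nabla\theta_e,\theta_e\rangle$ to vanish, leaving the energy identities $\nu\|\nabla\mathbf{y}_e\|_{L^2}^2=\langle\mathbf{f},\mathbf{y}_e\rangle+\gamma\langle(\theta_e-\bar\theta)\mathbf{e}_d,\mathbf{y}_e\rangle$ and $\kappa\|\nabla\theta_e\|_{L^2}^2=\langle g,\theta_e\rangle$. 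Poincar\'e's and Young's inequalities then yield a bound in $H^1_0\times H^1_0$ depending only on the data, giving a weak solution by Leray--Schauder (or Galerkin) in the Hilbert setting. The final step is to \emph{lift the regularity}: treating $-(\mathbf{y}_e\cdot\nabla)\mathbf{y}_e$, $\gamma(\theta_e-\bar\theta)\mathbf{e}_d$ and $-\mathbf{y}_e\cdot\nabla\theta_e$ as forcing, I would iterate $L^q$-elliptic regularity for the Stokes and scalar Dirichlet problems, raising the integrability exponent of the convective terms through the Sobolev chain until reaching $\mathbf{W}^{2,q}\times W^{2,q}$ for the prescribed $q$. The pressure is then recovered via de Rham's theorem: the momentum residual lies in $\mathbf{L}^q(\Omega)$ and annihilates every divergence-free test field, hence equals $\nabla\pi_e$ for a unique $\pi_e\in W^{1,q}(\Omega)/\BR$, delivering the asserted regularity $(\mathbf{y}_e,\theta_e,\pi_e)\in(\mathbf{W}^{2,q}\cap\mathbf{W}^{1,q}_0)\times(W^{2,q}\cap W^{1,q}_0)\times(W^{1,q}/\BR)$.
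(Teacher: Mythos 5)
First, a point of comparison: the paper does not prove Theorem \ref{Thm-6.1} at all; it is quoted from the literature, with the $q \neq 2$ case deferred to \cite{Ac}, \cite{AAC.1}, \cite{AAC.2} and the Hilbert case to \cite{CF:1980}, \cite{FT:1984}, \cite{VRR:2006}, \cite{Kim:2012}. So your attempt is necessarily a ``different route,'' and most of it is the standard and correct one: decoupled Stokes/scalar solves, energy estimates in which the convective terms vanish by solenoidality and the homogeneous boundary conditions (bounding $\theta_e$ first from the heat equation, then feeding it into the buoyancy term), Leray--Schauder at the Hilbert level, and an elliptic bootstrap with de Rham for the pressure. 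This works in $d=2$ for all $1<q<\infty$, and in $d=3$ for $q \geq \rfrac{6}{5}$: there the weak solution exists, the quadratic terms lie in $\mathbf{L}^{3/2}(\Omega)$, and your iteration of $L^q$ Stokes/Dirichlet regularity indeed climbs to $\mathbf{W}^{2,q}\times W^{2,q}$.

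The genuine gap is the range $d=3$, $1<q<\rfrac{6}{5}$ --- which is part of the claimed statement and is exactly the ``remaining small-$q$ range'' your fallback was supposed to handle. For such $q$ one has $\mathbf{L}^q(\Omega) \not\hookrightarrow (\mathbf{H}^1_0(\Omega))'$: by duality this embedding would require $H^1_0(\Omega)\hookrightarrow L^{q'}(\Omega)$ with $q'>6$, which fails in three dimensions. Consequently the pairing $\langle \mathbf{f},\mathbf{y}_e\rangle$ in your energy identity need not be finite, the weak formulation on $H^1_0\times H^1_0$ is not even defined, and neither Leray--Schauder nor Galerkin at the Hilbert level gets started. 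The failure is essential, not technical: a solution with the asserted regularity satisfies only $\mathbf{y}_e \in \mathbf{W}^{2,q}(\Omega)\hookrightarrow \mathbf{W}^{1,q^*}(\Omega)$ with $\rfrac{1}{q^*}=\rfrac{1}{q}-\rfrac{1}{3}$, and $q^*<2$ precisely when $q<\rfrac{6}{5}$, so the solutions one must produce generically lie outside the energy class and cannot be reached by energy methods at all. This low-integrability regime is precisely why the cited references (\cite{AR:2010} for singular data, \cite{Kim:2012} and \cite{AAC.2} for very weak solutions of the Boussinesq system) develop a transposition framework, producing the solution first in an $\mathbf{L}^p$ class by duality against adjoint problems with smooth data and lifting afterwards. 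To repair your proof you must either (i) restrict the claim to $q\geq \rfrac{6}{5}$ when $d=3$ --- which would in fact suffice for the use Section \ref{Sec-6} makes of the theorem, since there $q>d$ --- or (ii) replace the Hilbert-level existence step by such a transposition argument; your bootstrap and pressure-recovery steps then go through essentially unchanged.
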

	\noindent See \cite{Ac}, \cite{AAC.1}, \cite{AAC.2} for $ q \ne 2$. In the Hilbert space setting, see \cite{CF:1980}, \cite{FT:1984}, \cite{VRR:2006}, \cite{Kim:2012}.\\
	
	\nin \textbf{Instability of the equilibrium solution.} Instability of the equilibrium solution means that the operator $\BA_q$ in \eqref{6.14} below has a finite number, say $N$ unstable eigenvalues $\ds \dots \leq Re \ \lambda_{N+1} < 0 \leq Re \ \lambda_N \leq \dots \leq Re \ \lambda_1$. To counteract such instability, \cite{LPT.4} seeks a boundary control $v$ acting with support $\wti{\Gamma}$, and an interior control $\bs{u}$ acting on $\omega$, of the following feedback form
	\begin{align}
	v &= \sum^K_{k=1} \ip{P_N \bs{w}}{\bs{p}_k} f_k, \quad f_k \in \calF \subset \bs{W}^{2-\rfrac{1}{q},q}(\Gamma), \ \bs{p}_k \in (\bs{W}^u_N)^* \subset \lo{q'} \times L^q(\Omega), \ q \geq 2, \nonumber \\ & \hspace{11cm} f_k \text{ supported on } \wti{\Gamma} \label{6.3}\\
	\bs{u} &= \sum_{k=1}^K \ip{P_N \bs{w}}{\bs{p}_k} \bs{u}_k, \quad \bs{u}_k \in \what{\bs{L}}^q_{\sigma}(\Omega), \ \bs{q}_k (\bs{W}^u_N)^* \subset \lo{q'} \times L^q(\Omega), \quad \bs{u}_k(t) \text{ supported on } \omega \label{6.4}
	\end{align}
	\begin{multline*}
	\widehat{ \mathbf{L}}^q_{\sigma} (\Omega) \equiv \text{ any (d-1)-dimensional the space obtained from } \mathbf{L}^q_{\sigma} (\Omega) \text{ after omitting one specific} \\ \text{ co-ordinate, except the } \text{d\textsuperscript{th} coordinate from the vectors of }\mathbf{L}^q_{\sigma} (\Omega).
	\end{multline*} 
	\nin which, once inserted in \eqref{6.1d} and \eqref{6.1a} respectively yield the following feedback closed loop PDE-system	
	\begin{subequations}\label{6.5}
		\begin{empheq}[left=\empheqlbrace]{align}
		\frac{d \bs{w}_f}{dt} - \nu \Delta \bs{w}_f + L_e(\bs{w}_f) - \gamma w_h \bs{e}_d + \nabla \chi &= m \left( \sum_{k=1}^K \ip{P_N \bs{w}}{\bs{p}_k} \bs{u}_k \right)   &\text{ in } Q \label{6.5a}\\
		\frac{dw_h}{dt} - \kappa \Delta w_h + \bs{y}_e \cdot \nabla w_h + \bs{w}_f \cdot \nabla \theta_e &= 0 &\text{ in } Q \label{6.5b}\\
		\text{div } \bs{w}_f &= 0   &\text{ in } Q \label{6.5c}\\
		\bs{w}_f \equiv 0, \ w_h &\equiv \sum^K_{k=1} \ip{P_N \bs{w}}{\bs{p}_k} f_k &\text{ on } \Sigma \label{6.5d}\\
		\bs{w}_f(0,\cdot) = \bs{w}_{f,0}; \quad w_h(0,\cdot) & = w_{h,0} &\text{ on } \Omega. \label{6.5e}
		\end{empheq}
	\end{subequations}	
	\nin to be further explained below. Qualitatively, the main result of the present Section \ref{Sec-6} is: \uline{for a suitable explicit selection of the boundary vectors $f_k$ and the interior vectors $\bs{p}_k, \bs{q}_k, \bs{u}_k$ in \eqref{6.3}, \eqref{6.4} the resulting boundary feedback closed loop system (\hyperref[6.5]{6.5a-b-c-d}) generates a s.c. semigroup which is analytic, uniformly stable, with generator that has maximal $L^p$-regularity up to $T = \infty$ in a suitable functional setting to be identified below.} Moreover, $ K = \max \left\{ \text{geometric multiplicity of } \lambda_i, \ i = 1,\dots, N \right\}$. The formal statements will be given in Theorems \ref{Thm-6.4} and \ref{Thm-6.5} below.\\
	
	\nin The Helmholtz decomposition of Section \ref{Sec-5}, and related machinery, with projection $P_q$ applies now in the study of the linearized N-S equation \eqref{6.1a}. In particular, the space $\bls$ is defined in \eqref{5.7} and is the state space of the velocity vector. Next we define the coupling linear terms as bounded operators on $L^q(\Omega), \ \bls$ respectively, $q > d$:	
	\begin{align}
	\text{[from the N-S equation]} \quad \calC_{\gamma} h &= -\gamma P_q (h \mathbf{e}_d), \quad \calC_{\gamma} \in \calL (L^q(\Omega),\bls), \label{6.6}   \\
	\text{[from the heat equation]} \quad \calC_{\theta_e} \mathbf{z} &=  \mathbf{z} \cdot \nabla \theta_e, \quad \calC_{\theta_e} \in \calL(\bls,\lqo); \label{6.7}
	\end{align}
	 \nin Thus applying the Helmholtz projector $P_q$ to the coupled linearized $N-S$ equation \eqref{6.1a} and recalling the operator $\calA_q$ from \eqref{5.11} as well as \eqref{6.6}, we rewrite \eqref{6.1a} abstractly as
	 \begin{equation}\label{6.8}
	 	\frac{d \bs{w}_f}{dt} - \calA_q \bs{w}_f + C_{\gamma}w_h = P_q(m \bs{u}).
	 \end{equation}
	 \nin Next, with the goal of writing the abstract model for the coupled heat equation \eqref{6.1b}, we introduce the following operators
	 \begin{enumerate}[(i)]
	 	\item the heat operator $B_q$ in $L^q(\Omega)$ with homogeneous Dirichlet boundary conditions
	 	\begin{equation}\label{6.9}
	 	B_q h = -\Delta h, \quad \mathcal{D}(B_q) = W^{2,q}(\Omega) \cap W^{1,q}_0(\Omega);
	 	\end{equation}
	 	\item the first order operator $B_{o,q}$, corresponding to $B_q$:
	 	\begin{equation}\label{6.10}
	 	B_{o,q} h = \mathbf{y}_e \cdot \nabla h, \quad \mathcal{D}(B_{o,q}) = \mathcal{D}(B_q^{\rfrac{1}{2}}) \subset \lqo;
	 	\end{equation}
	 	\item the following operator for the heat component
	 	\begin{equation}\label{6.11}
	 	\calB_q  = - (\kappa B_q + B_{o,q}), \quad \calD(\calB_q) = \calD(B_q) \subset \lqo.
	 	\end{equation}
	 \end{enumerate} 
	 \nin If we take $v = 0$ in \eqref{6.1d}, the abstract version of the corresponding equation \eqref{6.1b} is, recalling \eqref{6.3}
	 \begin{equation}\label{6.12}
	 	\frac{d w_h}{dt} - \calB_q w_h + \calC_{\theta_e} \bs{w}_f = 0, \ \text{ for } v \equiv 0.
	 \end{equation}
	 \nin Thus, by \eqref{6.8} and \eqref{6.12}, the abstract model of the uncontrolled PDE-system (\hyperref[6.1a]{6.1a-e}) (that is, with $v \equiv 0$ and $\bs{u} \equiv 0$) is given by 
	 \begin{equation}\label{6.13}
	 \frac{d}{dt} \bbm \bs{w}_f \\ w_h \ebm = \BA_q \bbm \bs{w}_f \\ w_h \ebm \text{ in } \mathbf{W}^q_{\sigma}(\Omega) \equiv \lso \times L^q(\Omega), \text{ with } v \equiv 0, \bs{u} \equiv 0
	 \end{equation} 
	 \nin where the free dynamics operator $\ds \BA_q$ is given by 
	 \begin{multline}\label{6.14}
	 \BA_q = \bbm \calA_q & -\calC_{\gamma} \\ -\calC_{\theta_e} & \calB_q \ebm : \bs{W}^q_{\sigma}(\Omega) = \lso \times \lqo \supset \calD(\BA_q) = \calD(\calA_q) \times \calD(\calB_q) \\ = (\bs{W}^{2,q}(\Omega) \cap \bs{W}^{1,q}_{0}(\Omega) \cap \lso) \times (W^{2,q}(\Omega) \cap W^{1,q}_{0}(\Omega)) \longrightarrow \bs{W}^q_{\sigma}(\Omega).
	 \end{multline}	
	 \nin Next, in preparation for the abstract version of the fully controlled dynamics (\hyperref[6.1a]{6.1a-e}), we introduce the Dirichlet map $D$ \cite[p181]{L-T.1} with reference to the Dirichlet boundary controlled thermal equation \eqref{6.1b}
	 \begin{subequations}\label{6.15}
	 	\begin{empheq}[left=\empheqlbrace]{align}
	 		\psi = Dv \iff \left\{ \Delta \psi = 0 \text{ in } \Omega, \ \psi|_{\Gamma} = v \text{ on } \Gamma \right\} \label{6.15a}\\
	 		D:L^q(\Gamma) \longrightarrow W^{\rfrac{1}{q}, q}(\Omega) \subset \calD(B_q^{\rfrac{1}{2q} - \varepsilon}) \text{ continuously} \label{6.15b}\\
	 		B_q^{\rfrac{1}{2q} - \varepsilon}D \in \calL(L^q(\Gamma), L^q(\Omega)), \label{6.15c}
	 	\end{empheq}
	 \end{subequations}
 	\nin counterpart of \eqref{5.15}. Accordingly, we rewrite Eq \eqref{6.1b} as
 	\begin{equation}\label{6.16}
 		\frac{d w_h}{dt} - \kappa \Delta (w_h - Dv) + \bs{y}_e \cdot \nabla w_h + \bs{w}_f \cdot \nabla \theta_e = 0 \text{ in } Q
 	\end{equation}
 	\nin where $\ds [w_h - Dv]_{\Gamma} = 0$ by \eqref{6.1d}, \eqref{6.15a}. Accordingly, invoking the operators $B_q, \calC_{\theta_e}$ from \eqref{6.9}, \eqref{6.7}, we can rewrite Eq \eqref{6.16} abstractly as
 	\begin{equation}\label{6.17}
 		\frac{d w_h}{dt} + \kappa B_q (w_h - Dv) + B_{o,q} w_h + \calC_{\theta_e} \bs{w}_f = 0.
 	\end{equation}
 	\nin Thus, setting $\ds \bs{w} = \{ \bs{w}_f, w_h \}$ and combining Eqts \eqref{6.8} with Eq \eqref{6.17}, we obtain the abstract model of the controlled PDE-linearized Boussinesq system (\hyperref[6.1a]{6.1a-e}):
 	\begin{equation}\label{6.18}
 		\frac{d \bs{w}}{dt} =
 		\frac{d}{dt}
 		\bbm \bs{w}_f \\ w_h \ebm = \bbm \calA_q & -\calC_{\gamma} \\ -\calC_{\theta_e} & \calB_q \ebm \bbm \bs{w}_f \\ w_h \ebm + \bbm P_q (m\mathbf{u}) \\ \kappa B_{ext,q}Dv \ebm.
 	\end{equation}
 	\nin where $B_{ext,q}$ extends $B_q$ in \eqref{6.9} from $\lqo \to [\calD(B_q^*)]'$.
 	
 	\subsection{Properties of the operator $\BA_q$ in \eqref{6.14}.}
 	
 	The following result collects basic properties of the operator $\BA_q$. It is essentially a corollary of Theorems A.3 and A.4 in \cite[Appendix A]{LPT.4} for the Oseen operator $\calA_q$, as similar results hold for the operator $\calB_q$, while the operator $\calC_{\gamma}$ and $\calC_{\theta_e}$ in the definition \eqref{6.14} of $\BA_q$ are bounded operators, see \eqref{6.6}, \eqref{6.7}. 
 	
 	\begin{thm}\label{Thm-6.2}
 		With reference to the Operator $\BA_q$ in \eqref{6.14}, the following properties hold true:
 		\begin{enumerate}[(i)]
 			\item $\ds \BA_q$ is the generator of strongly continuous analytic semigroup on $\mathbf{W}^q_{\sigma}(\Omega)$ for $t > 0$;
 			\item $\BA_q$ possesses the maximal $L^p$-regularity property on  $\mathbf{W}^q_{\sigma}(\Omega)$ over a finite interval:
 			\begin{equation}
 			\BA_q \in MReg(L^p(0,T;\bs{W}^q_{\sigma}(\Omega))), \ 0 < T < \infty.
 			\end{equation}
 			\item $\ds \BA_q$ has compact resolvent on $\bs{W}^q_{\sigma}(\Omega)$.
 		\end{enumerate}
 	\end{thm}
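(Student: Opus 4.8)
The plan is to exploit the $2\times2$ block structure of $\BA_q$ in \eqref{6.14} by splitting it as a \emph{bounded} perturbation of its block-diagonal part. Write
\begin{equation*}
\BA_q = \BA_q^{\mathrm{diag}} + \calC, \qquad
\BA_q^{\mathrm{diag}} = \bbm \calA_q & 0 \\ 0 & \calB_q \ebm, \qquad
\calC = \bbm 0 & -\calC_{\gamma} \\ -\calC_{\theta_e} & 0 \ebm .
\end{equation*}
By \eqref{6.6}--\eqref{6.7} the coupling $\calC$ is a genuinely bounded operator on the product space $\bs{W}^q_{\sigma}(\Omega) = \lso \times \lqo$, i.e. $\calC \in \calL(\bs{W}^q_{\sigma}(\Omega))$. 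All three assertions (i)--(iii) will be obtained by first proving them for $\BA_q^{\mathrm{diag}}$ and then absorbing the bounded term $\calC$ through standard perturbation theory.

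First I would record the properties of the two diagonal blocks on their respective factors. The Oseen operator $\calA_q$ of \eqref{5.11} generates a s.c. analytic semigroup on $\lso$, has compact resolvent, and possesses maximal $L^p$-regularity up to any finite $T$; this is precisely the content of Theorems A.3 and A.4 of \cite[Appendix A]{LPT.4}. The heat-component operator $\calB_q$ of \eqref{6.11} has the identical architecture: a translated Dirichlet Laplacian $\kappa B_q$ perturbed by the first-order convection term $B_{o,q}$ of \eqref{6.10}, which by \eqref{6.10} is only $B_q^{\rfrac{1}{2}}$-bounded. Hence the same three arguments apply verbatim to $\calB_q$ on $\lqo$ (analytic generation via \cite[Corollary 2.4, p81]{P:1983}, compactness of the resolvent via compactness of $B_q^{-1}$, maximal regularity of $B_q$ transferred through the relatively bounded $B_{o,q}$ via \cite[Theorem 6.2, p311]{Dore:2000}). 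Passing to the product, $\BA_q^{\mathrm{diag}}$ then inherits each property componentwise: its semigroup is the direct sum $e^{\calA_q t} \oplus e^{\calB_q t}$, hence s.c. and analytic; its resolvent is block-diagonal $R(\lambda,\calA_q) \oplus R(\lambda,\calB_q)$, hence compact; and the abstract inhomogeneous problem governed by $\BA_q^{\mathrm{diag}}$ decouples into the two single-block problems, so the estimate \eqref{1.9} on $\bs{W}^q_{\sigma}(\Omega)$ is just the sum of the two componentwise estimates (the product of UMD spaces being again UMD). Thus $\BA_q^{\mathrm{diag}} \in MReg(L^p(0,T;\bs{W}^q_{\sigma}(\Omega)))$, with compact resolvent and analytic semigroup.

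Then I would reinstate the bounded coupling $\calC$. Since $\calC \in \calL(\bs{W}^q_{\sigma}(\Omega))$: part (i) is immediate, a bounded perturbation of an analytic-semigroup generator being again such a generator \cite[Corollary 2.4, p81]{P:1983}; part (ii) follows because maximal $L^p$-regularity is stable under $A^{\theta_0}$-bounded perturbations with $\theta_0 < 1$, of which a bounded operator is the trivial case $\theta_0 = 0$ --- exactly the mechanism used for the perturbation $\Pi$ in the proof of Proposition \ref{prop-3.1}, via \cite[Theorem 6.2, p311]{Dore:2000} (equivalently \cite[Remark 1i, p426 for $\beta = 1$]{KW:2001}); and part (iii) follows from the second resolvent identity
\begin{equation*}
R(\lambda,\BA_q) = R(\lambda,\BA_q^{\mathrm{diag}}) + R(\lambda,\BA_q)\,\calC\,R(\lambda,\BA_q^{\mathrm{diag}}),
\end{equation*}
valid for $\Re\lambda$ large enough that $\lambda$ lies in both resolvent sets (guaranteed by part (i)). The right-hand side exhibits $R(\lambda,\BA_q)$ as the compact operator $R(\lambda,\BA_q^{\mathrm{diag}})$ plus a bounded operator composed with that same compact resolvent; since the compact operators form a two-sided ideal, $R(\lambda,\BA_q)$ is compact, which is (iii).

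The individual perturbation steps are entirely routine once the splitting is in place, so the only point demanding genuine care --- and the step I expect to be the main (if modest) obstacle --- is the verification that $\calB_q$ \emph{itself} carries maximal $L^p$-regularity. This rests on the $L^q$-theory of the Dirichlet Laplacian together with the relative-boundedness identification $\calD(B_{o,q}) = \calD(B_q^{\rfrac{1}{2}})$ in \eqref{6.10}, mirroring the Oseen estimates of \cite{LPT.4}; establishing it amounts to checking that $B_{o,q}$ is a legitimate $B_q^{\theta_0}$-bounded perturbation with $\theta_0 = \rfrac{1}{2} < 1$, after which the same perturbation theorem used for $\calA_q$ closes the argument.
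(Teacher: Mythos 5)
Your proposal is correct and takes essentially the same route as the paper: the paper disposes of Theorem \ref{Thm-6.2} in one sentence, noting it is ``essentially a corollary'' of Theorems A.3 and A.4 of \cite[Appendix A]{LPT.4} for $\calA_q$, of analogous results for $\calB_q$, and of the boundedness of the couplings $\calC_{\gamma}, \calC_{\theta_e}$ in \eqref{6.6}--\eqref{6.7}. Your block-diagonal-plus-bounded-coupling splitting, the componentwise treatment of $\calA_q$ and $\calB_q$, and the standard perturbation steps (Pazy for generation, \cite[Theorem 6.2]{Dore:2000} for maximal regularity, the resolvent identity plus the ideal property of compact operators for (iii)) are exactly the details the paper leaves implicit.
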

 	
 	\nin Next, we impose that the pair $\{ v, \bs{u} \}$ of controls be given in feedback form as in \eqref{6.3}, \eqref{6.4} \cite{LPT.4} repeated here as
 	\begin{align}
 		v &= F \ \cdot \ = \sum_{k = 1}^K \ip{P_N \ \cdot \ }{\bs{p}_k}f_k, \ f_k \in \calF \subset W^{2 - \rfrac{1}{q},q}(\Gamma),  \label{6.20} \\& \hspace{3cm} \bs{p}_k \in \left[ \left( \bs{W}^q_{\sigma}(\Omega) \right)^u_N \right]^* \subset \bs{L}^{q'}_{\sigma}(\Omega) \times L^q(\Omega), \ q \geq 2, f_k \text{ supported on } \wti{\Gamma}. \nonumber\\ 
 		\bs{u} &= J \ \cdot \ = P_q \left(m\sum_{k = 1}^K \ip{P_N \ \cdot \ }{\bs{q}_k} \bs{u}_k\right), \ \bs{q}_k \in \left[ \left( \bs{W}^q_{\sigma}(\Omega) \right)^u_N \right]^* \subset \bs{L}^{q'}_{\sigma}(\Omega) \times L^q(\Omega) \ \bs{u}_k \text{ supported on } \omega, \label{6.21}
 	\end{align}
 	\nin so that $F$ and $J$ are both bounded operators
 	\begin{equation}\label{6.22}
 		F \in \calL \left( \bs{W}^q_{\sigma}(\Omega), \bs{L}^q(\Gamma) \right); \ J \in \calL \left( \bs{W}^q_{\sigma}(\Omega), \bls \right) 
 	\end{equation}
 	\nin In \eqref{6.20}, \eqref{6.21}, $\ip{ \cdot }{ \cdot }$ denotes the duality paring $\ds \left( \bs{W}^q_{\sigma}(\Omega) \right)^u_N \rightarrow \left[ \left( \bs{W}^q_{\sigma}(\Omega) \right)^u_N \right]^*$ and the vectors $\ds \bs{p}_k, \bs{q}_k \in \left[ \left( \bs{W}^q_{\sigma}(\Omega) \right)^u_N \right]^* $. Substituting \eqref{6.20}, \eqref{6.21} into \eqref{6.18} yields the linearized $\bs{w}$-problem in feedback form
 	\begin{equation}\label{6.23}
 		\frac{d \bs{w}}{dt} = \bbm \calA_q & -\calC_{\gamma} \\[1mm] -\calC_{\theta_e} & \calB_q \ebm \bs{w} + \bbm \ds P_q \left( m \sum_{k = 1}^K \ip{P_N \bs{w}}{\bs{q}_k} \bs{u}_k \right) \\[5mm] \ds \kappa B_{ext,q}D \left( \sum_{k = 1}^K \ip{P_N \bs{w}}{\bs{p}_k} f_k \right) \ebm
 	\end{equation}
 	\nin or
 	\begin{equation}\label{6.24}
 	\frac{d \bs{w}}{dt} = \bbm \calA_q & -\calC_{\gamma} \\[1mm] -\calC_{\theta_e} & \calB_q \ebm \bs{w} + \bbm J \bs{w} \\ \kappa B_{ext,q}DF \bs{w} \ebm \equiv \BA_{_{F,q}} \bs{w}
 	\end{equation}
 	\nin Eq \eqref{6.23} is the abstract version of the boundary feedback problem (\hyperref[6.5]{6.5a-d}) in PDE form. Recalling \eqref{5.11} for $\calA_q$ and \eqref{6.11} for $\calB_q$, rewrite \eqref{6.24} with $\nu = \kappa = 1, \bs{w} = \left[ \bs{w}_1, w_2 \right] \in \bs{W}^q_{\sigma}(\Omega)$,
	 \begin{equation}\label{6.25}
	 	\frac{d \bs{w}}{dt} = \bbm -A_q \bs{w}_1 \\[1mm] -B_q \left( \bbm 0 \\ I_2 \ebm + DF \right) \bs{w} \ebm + \bbm -A_{o,q} & -\calC_{\gamma} \\[1mm] -\calC_{\theta_e} & -B_{o,q} \ebm \bs{w} + \bbm J \bs{w} \\ 0 \ebm \equiv \BA_{_{F,q}} \bs{w}
	 \end{equation}
	 \begin{equation}\label{6.26}
	 	\frac{d \bs{w}}{dt} = \BA_{_{F,q}} \bs{w} = \hat{\BA}_{_{F,q}} \bs{w} + \Pi \bs{w}
	 \end{equation}
	 \begin{equation}\label{6.27}
	 	\hat{\BA}_{_{F,q}} \bs{w} = \bbm -A_q \bs{w}_1 \\[1mm] -B_q \left( \bbm 0 \\ I_2 \ebm + DF \right) \bs{w} \ebm, \quad \Pi \bs{w} = \bbm -A_{o,q} & -\calC_{\gamma} \\[1mm] -\calC_{\theta_e} & -B_{o,q} \ebm \bs{w} + \bbm J \bs{w} \\ 0 \ebm.
	 \end{equation}
	 \begin{equation}\label{6.28}
	 	\calD \left( \hat{\BA}_{_{F,q}} \right) = \left\{ \bs{w} = \bbm \bs{w}_1 \\ w_2 \ebm \in \bs{W}^q_{\sigma}(\Omega) = \bls \times L^q(\Omega): \ \bs{w}_1 \in \calD(A_q),\ \left( \bbm 0 \\ I_2 \ebm + DF \right)\bs{w} \in \calD(B_q)  \right\}
	 \end{equation}
	 \begin{equation}\label{6.29}
	 	\calD(\Pi) = \calD(A_{o,q}) \times \calD(B_{o,q}).
	 \end{equation}
	 \subsection{Maximal $L^p$-regularity on $\bs{W}^q_{\sigma}(\Omega)$ of the linearized feedback operator $\ds \BA_{_{F,q}}$ up to $T = \infty$.}
	 
	 \nin With reference to the operator $\BA_{_{F,q}}$ in \eqref{6.24} or \eqref{6.25}, consider the following abstract dynamics
	 \begin{equation}
	 	\bs{\chi}_t = \BA_{_{F,q}} \bs{\chi} + q, \quad \bs{\chi}(0) = 0 \text{ in } \bs{W}^q_{\sigma}(\Omega)
	 \end{equation}
	 \begin{equation}
	 	\bs{\chi}(t) = \int_{0}^{t} e^{\BA_{_{F,q}}(t-s)}q(s)ds
	 \end{equation}
	 \nin The main theorem of the present Section \ref{Sec-6} is
	 \begin{thm}\label{Thm-6.3}
	 	With reference to the bounded operator $F$ and $J$ in \eqref{6.20}, \eqref{6.21}, let $T < \infty$. Then, the operator $\ds \BA_{_{F,q}}$ in \eqref{6.25} has maximal $L^p$-regularity on $\bs{W}^q_{\sigma}(\Omega)$ up to $T < \infty$; that is,
	 	\begin{equation}
	 		(L \bs{\chi})(t) = \int_{0}^{t} e^{\BA_{_{F,q}}(t-s)}\bs{\chi}(s)ds
	 	\end{equation}
	 \end{thm}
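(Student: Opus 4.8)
The plan is to recognize $\BA_{_{F,q}}$ of \eqref{6.25} as a concrete instance of the abstract feedback operator $A_{_F}+B$ of Theorem \ref{Thm-1.2}, and then invoke that theorem. First I would fix the abstract/concrete dictionary: take $Y=\bs{W}^q_{\sigma}(\Omega)=\lso\times L^q(\Omega)$; the diagonal principal operator $A=\text{diag}(A_q,B_q)$ assembled from the Stokes operator \eqref{5.8a} and the heat operator \eqref{6.9}; the first-order/coupling perturbation
\[
A_o=\bbm -A_{o,q} & -\calC_{\gamma}\\ -\calC_{\theta_e} & -B_{o,q}\ebm ,
\]
so that $\calA=-A+A_o=\BA_q$ is exactly the free dynamics operator \eqref{6.14}; the boundary space $U=L^q(\Gamma)$ with Green map $G=\bbm 0\\ D\ebm$ given by the thermal Dirichlet map \eqref{6.15}; the feedback $F$ of \eqref{6.20}; and a bounded operator $B$ that will collect the interior control $J$ together with the remainder produced when the factor $(I-GF)$ is expanded.

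Next I would verify \Hi--\Hv. \Hi \ holds because $\lso\times L^q(\Omega)$, a product of closed subspaces of $L^q$-spaces, is a UMD space. \Hii \ holds since $-A_q$ and $-B_q$ each generate bounded analytic semigroups, hence so does the diagonal sum $-A$, with fractional powers $A^{\theta}=\text{diag}(A_q^{\theta},B_q^{\theta})$. \Hiii \ holds because $A^*=\text{diag}(A_q^*,B_q^*)$ and both the adjoint Stokes and adjoint heat operators are classically known to have maximal $L^p$-regularity on $\lo{q'}\times L^{q'}(\Omega)$. \Hiv \ is precisely \eqref{6.15c}, giving $A^{\gamma}G\in\calL(U,Y)$ with $\gamma=\rfrac{1}{2q}-\varepsilon$. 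Finally \Hv \ is the core computation: by \eqref{6.10} the diagonal entries $A_{o,q},B_{o,q}$ are $A_q^{\rfrac12}$- and $B_q^{\rfrac12}$-bounded, while by \eqref{6.6}, \eqref{6.7} the couplings $\calC_{\gamma},\calC_{\theta_e}$ are bounded; hence $A_o$ is $A^{1-\varepsilon}$-bounded with $1-\varepsilon=\rfrac12<1$, matching \eqref{1.2a}.

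With the dictionary in place I would confirm the additive identity $\BA_{_{F,q}}=\calA(I-GF)+B=\BA_q(I-GF)+B$ by expanding $\BA_q\,GF$ and comparing with \eqref{6.24}. The genuinely unbounded part of the boundary feedback, the extrapolated term $B_{ext,q}DF$, is captured exactly by the factored form $\BA_q(I-GF)$, and the leftover is
\[
B=\bbm J-\calC_{\gamma}DF\\ -B_{o,q}DF\ebm ,
\]
which is bounded on $Y$: $J\in\calL(Y)$ by \eqref{6.22}, and since the feedback vectors satisfy $f_k\in\bs{W}^{2-\rfrac1q,q}(\Gamma)$ their lifts $Df_k$ lie in $\bs{W}^{2,q}(\Omega)$, so $\calC_{\gamma}Df_k$ and $B_{o,q}Df_k=\bs{y}_e\cdot\nabla Df_k$ are fixed $L^q$-functions. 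Theorem \ref{Thm-1.2}a) then gives $\BA_q(I-GF)\in MReg(L^p(0,T;Y))$ for $T<\infty$, and the bounded additive perturbation $B$ preserves finite-interval maximal regularity, yielding $\BA_{_{F,q}}\in MReg(L^p(0,T;\bs{W}^q_{\sigma}(\Omega)))$.

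The step I expect to be the genuine obstacle, and the reason Theorem \ref{Thm-1.2} is needed rather than a naive perturbation of $\BA_q$, is the domain mismatch hidden in \Hv. On the domain \eqref{6.28} the heat component $w_2$ lies only in $\calD(B_q^{\rfrac{1}{2q}-\varepsilon})$, not in $\calD(B_q^{\rfrac12})$, so $B_{o,q}w_2$ is not directly controlled and the perturbation $\Pi$ of \eqref{6.27} fails to be relatively bounded with respect to $\hat{\BA}_{_{F,q}}$ on its natural domain. This is exactly the obstruction that the proof of Theorem \ref{Thm-1.2} circumvents by passing to the adjoint $\BA_{_{F,q}}^*$, where $(I-GF)^*$ appears as a left factor and the perturbation acts on the clean domain $\calD(A^{*\theta_0})$ with $\theta_0=\max\{1-\varepsilon,1-\gamma\}<1$; there $A^{*\theta_0}$-boundedness as in \eqref{3.8}, combined with maximal regularity of $-A^*$, yields maximal regularity of $\BA_{_{F,q}}^*$, which transfers back by the UMD duality recorded in the Corollary following Theorem \ref{Thm-1.2}. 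Thus the weight of the argument rests on the abstract theorem, and the only new work is the $2\times2$ block bookkeeping and the verification that the coupling and interior-control terms have the required relative order.
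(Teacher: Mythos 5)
Your proposal is correct, but it takes a genuinely different route from the paper's proof, which explicitly states that Theorem \ref{Thm-1.2} \emph{cannot} be applied directly. The paper never forms your identity; instead it splits $\BA_{_{F,q}} = \hat{\BA}_{_{F,q}} + \Pi$ as in \eqref{6.26}--\eqref{6.27} and then \emph{re-runs} the adjoint mechanism of Steps 3--4 of the proof of Theorem \ref{Thm-1.2} on the concrete block operator: it computes $\hat{\BA}_{_{F,q}}^{*}$ in \eqref{6.39}, factors the adjoint feedback term as $F^{*}D^{*}B_q^{*} = \big( F^{*}D^{*}B_q^{*^{\gamma}} \big) B_q^{*^{1-\gamma}}$ with $\gamma = \rfrac{1}{2q} - \varepsilon$ via \eqref{6.15c}, notes that this (together with the adjoint of the lower-order terms) is relatively bounded of fractional order with respect to the diagonal operator $\operatorname{diag}(A_q^{*}, B_q^{*})$, applies the perturbation theorem of \cite{Dore:2000}, \cite{KW:2001} on $\bs{W}^{q'}_{\sigma}(\Omega)$, and transfers back by the UMD duality of Step 4. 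You instead verify the exact operator identity $\BA_{_{F,q}} = \BA_q(I - GF) + B$, with $G$ the column operator whose fluid component is $0$ and thermal component is $D$, and with $B$ the bounded residual having components $J - \calC_{\gamma}DF$ and $-B_{o,q}DF$; you then check \Hi--\Hv \ for the dictionary $A = \operatorname{diag}(A_q, B_q)$ and invoke Theorem \ref{Thm-1.2}~a) as a black box, plus the standard fact (implicitly used in the paper's own proof of part c)) that bounded perturbations preserve maximal regularity on finite intervals. I checked your algebra: the identity holds and the domains \eqref{1.6b} and \eqref{6.28} coincide (the sign in front of $DF$ in \eqref{6.25}--\eqref{6.28} notwithstanding). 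The genuinely delicate point, which you correctly isolated, is that $B$ is bounded \emph{only} because $F$ in \eqref{6.20} is finite rank with smooth vectors $f_k \in W^{2 - \rfrac{1}{q},q}(\Gamma)$, so that $Df_k \in W^{2,q}(\Omega)$ and $B_{o,q}Df_k \in L^q(\Omega)$; for a general bounded $F$ the term $B_{o,q}DF$ would be unbounded, since $D$ maps only into $W^{\rfrac{1}{q},q}(\Omega)$. This is the trade-off: the paper's dual-side computation never needs the finite-rank/smoothness structure, because $(I - GF)^{*}$ sits on the left and all perturbations act on the clean domain $\calD(A_q^{*}) \times \calD(B_q^{*})$, so it covers arbitrary bounded feedback; your argument is shorter and reuses the abstract theorem verbatim, but is tied to the concrete feedback class --- which is, however, exactly the class used for stabilization here.

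One correction to your closing discussion: you assert that the perturbation $\Pi$ of \eqref{6.27} \emph{fails} to be relatively bounded with respect to $\hat{\BA}_{_{F,q}}$ on the domain \eqref{6.28}. Your domain observation is right ($w_2$ alone need not lie in $\calD(B_q^{\rfrac{1}{2}})$), but the conclusion is too strong: splitting $w_2$ into the component lying in $\calD(B_q)$ per \eqref{6.28} plus the finite-rank remainder in $W^{2,q}(\Omega)$, the same smoothness facts you use for $B$ yield $\norm{B_{o,q} w_2}_{L^q(\Omega)} \leq \epsilon \norm{\hat{\BA}_{_{F,q}} \bs{w}}_{\bs{W}^q_{\sigma}(\Omega)} + C_{\epsilon} \norm{\bs{w}}_{\bs{W}^q_{\sigma}(\Omega)}$ for arbitrary $\epsilon > 0$, so the paper's primal-side reduction is also salvageable by the cited perturbation theorems. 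This does not affect your proof, which never uses that decomposition.
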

	 \nin continuous:
	 \begin{equation}
	 	L^p(0,T; \bs{W}^q_{\sigma}(\Omega)) \longrightarrow L^p \left(0,T; \calD \left(\BA_{_{F,q}} \right) \right)
	 \end{equation}
	 \nin so that continuously
	 \begin{equation}
	 	\bs{\chi} \in L^p \left(0,T; \calD \left(\BA_{_{F,q}} \right) \right) \cap W^{1,p}(0,T; \bs{W}^q_{\sigma}(\Omega)). 
	 \end{equation}
	 \subsection{The problem of feedback stabilization of the $\bs{w}$-dynamics \eqref{6.23}.}
	 
	 
	\nin We return to the basic preliminary assumption of instability of the equilibrium solution, that is of the operator $\BA_q$ in \eqref{6.14}, see below \eqref{6.2}. The following result is proved in \cite[Theorem 2.1]{LPT.4}.
	 
	 \begin{thm}\label{Thm-6.4}
	 	With reference to the closed-loop feedback abstract dynamics $\bs{w}$ on \eqref{6.23}, whose PDE version is given by the system (\hyperref[6.5a]{6.5a-e}), we can select (in infinitely many ways) boundary vectors $f_k \in W^{2 - \rfrac{1}{q},q}(\wti{\Gamma})$ with support on $\wti{\Gamma}$, interior vectors $\bs{u}_k \in \what{\bs{L}}^q_{\sigma}(\omega)$ with support $\omega$ as well as vectors $\bs{p}_k, \bs{q}_k \in \left[ \left( \bs{W}^q_{\sigma}(\Omega) \right)^u_N \right]^* $ so that the s.c. analytic semigroup $\ds e^{\BA_{_{F,q}}t}$ is uniformly stable on $\bs{W}^q_{\sigma}(\Omega)$
	 	\begin{equation}\label{6.35}
	 		\norm{e^{\BA_{_{F,q}}t}}_{\calL \left(\bs{W}^q_{\sigma}(\Omega) \right)} \leq C e^{-\gamma_1 t}, \ t \geq 0
	 	\end{equation}
	 	\nin with constant $\gamma_1$, satisfying $\ds Re \ \lambda_{N+1} < \gamma_1 < 0$. Recall \eqref{6.4} for $\what{\bs{L}}^q_{\sigma}(\Omega)$; i.e. $\bs{u}_k$ is $(d-1)$-dimensional. 
	 \end{thm}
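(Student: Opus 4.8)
The plan is to follow the spectral-decomposition / finite-dimensional-controllability scheme, adapted to the coupled Oseen–heat structure of $\BA_q$ in \eqref{6.14}. By Theorem \ref{Thm-6.2} the operator $\BA_q$ generates an analytic semigroup and has compact resolvent on $\bs{W}^q_{\sigma}(\Omega)$, so its spectrum is discrete and, by the instability hypothesis, splits as $\sigma(\BA_q) = \sigma_u \cup \sigma_s$ with $\sigma_u = \{\lambda_1,\dots,\lambda_N\}$ collecting the finitely many eigenvalues of nonnegative real part, while $Re\ \lambda \leq Re\ \lambda_{N+1} < \gamma_1$ on $\sigma_s$. First I would introduce the spectral (Riesz) projection $P_N$ onto the unstable generalized eigenspace $\bs{W}^u_N := P_N \bs{W}^q_{\sigma}(\Omega)$, which is finite-dimensional and $\BA_q$-invariant, together with the complementary projection $I - P_N$; the dynamics then decouples into a finite-dimensional unstable component $\bs{w}_N = P_N \bs{w}$ and an infinite-dimensional stable component. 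On the stable subspace the free semigroup already decays faster than $e^{\gamma_1 t}$ (rate $|Re\ \lambda_{N+1}| > |\gamma_1|$), so no control is needed there and the entire stabilization burden falls on the finite-dimensional projected system for $\bs{w}_N$.

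Second, I would reduce the construction of the feedback gains to a finite-dimensional controllability problem. Projecting the two control actions onto $\bs{W}^u_N$ — the boundary actuators $\{f_k\}$ entering the thermal component through $P_N \kappa B_{ext,q}D(\cdot)$ and the interior actuators $\{\bs{u}_k\}$ entering the fluid component through $P_N P_q(m(\cdot))$ — yields a finite-dimensional controlled ODE $\dot{\bs{w}}_N = \Lambda_N \bs{w}_N + \calG\,\xi$, with $\Lambda_N := \BA_q|_{\bs{W}^u_N}$ and input $\xi$ assembled from the scalar amplitudes of $\{f_k,\bs{u}_k\}$. By the Hautus/Kalman criterion, stabilizability (indeed controllability) of the pair $(\Lambda_N,\calG)$ is equivalent to the statement that no nonzero adjoint eigenvector $\Phi^* = \{\bs{\varphi}^*,\psi^*\}$, associated with an unstable $\lambda_i$, is simultaneously annihilated by all the actuator functionals; concretely, that $\bs{\varphi}^*$ cannot have vanishing restriction to $\omega$ (the fluid actuators $\bs{u}_k$) while $\psi^*$ has vanishing normal-derivative trace against $\{f_k\}$ on $\wti{\Gamma}$. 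Choosing $K = \max_i(\text{geometric multiplicity of } \lambda_i)$ guarantees enough independent actuators, and selecting $\{f_k\},\{\bs{u}_k\}$ generically within the relevant trace / restriction spaces reduces the whole matter to a unique continuation statement for the over-determined adjoint eigenproblem of the coupled system.

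Third, and this is the crux, I would establish that Unique Continuation Property: an adjoint eigenpair $\{\bs{\varphi}^*,\psi^*\}$ of the coupled Oseen–heat system, subject to the homogeneous Dirichlet conditions built into $\calD(\BA_q^*)$ and additionally vanishing on the interior collar $\omega$ (fluid component) and with vanishing co-normal trace on $\wti{\Gamma}$ (thermal component), must vanish identically. As in the Navier–Stokes setting of Section \ref{Sec-5}, the interior control on the fluid equation is retained precisely to secure this property and to circumvent the obstruction exhibited by the counterexample \cite{FL:1996} for the over-determined Stokes/Oseen problem, following \cite{LT1:2015}, \cite{LT2:2015}. The coupling operators $\calC_\gamma, \calC_{\theta_e}$ of \eqref{6.6}, \eqref{6.7} must be carried through the argument, exploiting the interior analyticity of eigenfunctions of the elliptic generators $\calA_q$ and $\calB_q$.

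Finally, once controllability of the finite-dimensional pair is secured, a standard finite-dimensional spectral-assignment (Fursikov–Barbu-type) construction produces gains placing the spectrum of the closed-loop projected generator strictly to the left of $\gamma_1$; these gains identify the functionals $\ip{P_N \cdot}{\bs{p}_k}$ and $\ip{P_N \cdot}{\bs{q}_k}$ with $\bs{p}_k,\bs{q}_k$ lying in the dual unstable eigenspace $\left[\left(\bs{W}^q_{\sigma}(\Omega)\right)^u_N\right]^*$. Reassembling the two components and noting that the feedback is a finite-rank (hence relatively compact) perturbation keeping $\BA_{_{F,q}}$ the generator of an analytic semigroup (Proposition \ref{prop-1.1}, Theorem \ref{Thm-6.2}), the closed-loop spectrum lies in $\{Re\ \lambda < \gamma_1\}$; the spectrum-determined growth property valid for analytic semigroups then upgrades this spectral bound to the exponential decay estimate \eqref{6.35}. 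The decisive obstacle is unquestionably the Unique Continuation Property of the third step: once it delivers controllability of the projected pair, the spectral decomposition and finite-dimensional pole assignment of the remaining steps are routine.
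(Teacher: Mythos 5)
First, a point of reference: the paper does not prove Theorem \ref{Thm-6.4} at all; it quotes it verbatim from \cite[Theorem 2.1]{LPT.4}, so the only meaningful comparison is with the strategy of that cited companion paper. At the level of architecture, your outline is the right one and matches that line of work: Riesz projection $P_N$ onto the finite-dimensional unstable generalized eigenspace of $\BA_q$ (legitimate by compactness of the resolvent and analyticity, Theorem \ref{Thm-6.2}); reduction of the gain construction to controllability of the projected finite-dimensional pair, with $K$ equal to the maximal geometric multiplicity; equivalence of the Hautus/Kalman condition with a unique continuation property for the over-determined adjoint eigenproblem; then pole assignment, the block-triangular closed-loop structure (the feedback senses only $P_N\bs{w}$, so the stable complement is merely driven by the decaying finite-dimensional part), and the spectrum-determined growth property of analytic semigroups to get \eqref{6.35}.

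The genuine gap is exactly at the step you yourself identify as the crux, and it is not merely ``deferred'': the mechanism you propose for it would fail. You suggest establishing the Unique Continuation Property --- an adjoint eigenpair $\{\bs{\varphi},\psi\}$ of the coupled Oseen--heat system with $\bs{\varphi}$ annihilated by the interior actuators on $\omega$ and $\psi$ having vanishing normal derivative on $\wti{\Gamma}$ must vanish --- by ``exploiting the interior analyticity of eigenfunctions of the elliptic generators.'' But the coefficients of $\calA_q$ and $\calB_q$ involve the equilibrium pair $(\bs{y}_e,\theta_e)$, which by Theorem \ref{Thm-6.1} is only $\bs{W}^{2,q}\times W^{2,q}$-regular, not analytic; hence the adjoint eigenfunctions are not real-analytic and no analytic-continuation argument is available. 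This is precisely why \cite{LPT.4} (as its title advertises) must prove this UCP by Carleman-estimate-based inverse theory, which is the principal technical content of that paper and is absent from your proposal. Two further inaccuracies in your formulation of the UCP matter for a correct statement: the interior actuators $\bs{u}_k$ lie in $\what{\bs{L}}^q_{\sigma}(\omega)$ and are $(d-1)$-dimensional (see \eqref{6.4}), so the over-determination on $\omega$ yields vanishing only of the tested components of $\bs{\varphi}$, not of the full vector; and the roles of the counterexample \cite{FL:1996} and of \cite{LT1:2015}, \cite{LT2:2015} belong to the tangential-like setting of Section \ref{Sec-5}, whereas in Section \ref{Sec-6} the interior control is explicitly \emph{not} tangential-like, so that reasoning cannot be imported wholesale. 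Everything downstream in your argument (controllability of the projected pair, pole placement, the decay \eqref{6.35}) is conditioned on this unproven, and as proposed unprovable-by-your-method, step.
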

 	\begin{thm}\label{Thm-6.5}
 		Under the setting of Theorem \ref{Thm-6.4}, we have that Theorem \ref{Thm-6.3} holds true up to $T = \infty$:
 		\begin{equation}\label{6.36}
 			\BA_{_{F,q}} \in MReg \left( L^p \left( 0, \infty; \bs{W}^q_{\sigma}(\Omega) \right) \right).
 		\end{equation}
 	\end{thm}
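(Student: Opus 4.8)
The plan is to upgrade the finite-horizon result of Theorem \ref{Thm-6.3} to the half-line by feeding in the uniform stability of Theorem \ref{Thm-6.4}, exactly along the lines of the finite-to-infinite mechanism already used in Theorem \ref{Thm-1.2} part b). In other words, once both (i) maximal $L^p$-regularity on every finite interval and (ii) exponential decay of the generated semigroup are in hand, the passage to $T = \infty$ is automatic; no new PDE analysis is required, since the substantive work (the unique continuation property for the coupled Oseen--heat adjoint eigenproblem, and the construction of the finite-dimensional localized controllers that avoid the counterexample) is already encoded in Theorem \ref{Thm-6.4}.

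First I would record that the state space $\bs{W}^q_{\sigma}(\Omega) = \bls \times L^q(\Omega)$, with $q \geq 2$, is a UMD space: it is a finite product of (subspaces of) $L^q(\Omega)$-spaces, each of which is UMD by Remark \ref{rmk-1.1}a), and the UMD property is stable under finite products and passage to closed subspaces. Thus the abstract hypothesis \Hi \ is met for this state space. Next, by Theorem \ref{Thm-6.2}(i) together with the relatively bounded perturbation structure \eqref{6.25}--\eqref{6.27} --- in which the principal part $\hat{\BA}_{_{F,q}}$ is the block-diagonal Stokes/heat operator carrying the Dirichlet boundary feedback, and $\Pi$ collects the lower-order terms $A_{o,q}, B_{o,q}, \calC_{\gamma}, \calC_{\theta_e}$ and the bounded interior-control operator $J$ --- the operator $\BA_{_{F,q}}$ generates a strongly continuous analytic semigroup $e^{\BA_{_{F,q}}t}$ on $\bs{W}^q_{\sigma}(\Omega)$.

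Then I would invoke the two ingredients. By Theorem \ref{Thm-6.3}, $\BA_{_{F,q}} \in MReg \big( L^p(0,T;\bs{W}^q_{\sigma}(\Omega)) \big)$ for every finite $T$. By Theorem \ref{Thm-6.4}, under the indicated selection of the vectors $f_k, \bs{u}_k, \bs{p}_k, \bs{q}_k$, the semigroup obeys the uniform exponential bound \eqref{6.35} with rate $\gamma_1 < 0$, hence is uniformly stable on $\bs{W}^q_{\sigma}(\Omega)$. With these two facts, I would apply \cite[Theorem 5.2, p307]{Dore:2000}: for the generator of an exponentially stable analytic semigroup, maximal $L^p$-regularity on a finite interval is equivalent to maximal $L^p$-regularity on $(0,\infty)$. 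This yields \eqref{6.36}, namely $\BA_{_{F,q}} \in MReg \big( L^p(0,\infty;\bs{W}^q_{\sigma}(\Omega)) \big)$, completing the argument.

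The one point demanding care --- and the only genuine obstacle at this stage --- is that the finite-to-infinite theorem requires genuine exponential decay rather than mere boundedness of the semigroup, so that the half-line convolution $\int_0^{\infty} e^{\BA_{_{F,q}}(t-s)} q(s)\,ds$ converges and defines a bounded map into the maximal-regularity space. This is precisely what the strict inequality $\gamma_1 < 0$ in \eqref{6.35} guarantees, and it is the structural reason why the interior control $\bs{u}$ (the operator $J$) cannot be dispensed with: exactly as in the Navier--Stokes Case 1 of Section \ref{Sec-5} and as flagged in Remark \ref{rmk-1.2}, it is the additional localized control that forces the decay \eqref{6.35} and thereby promotes finite-horizon regularity to the half-line.
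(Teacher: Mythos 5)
Your proposal is correct and follows essentially the same route as the paper: the paper's proof of Theorem \ref{Thm-6.5} likewise just combines the finite-horizon regularity of Theorem \ref{Thm-6.3} with the uniform stability \eqref{6.35} from Theorem \ref{Thm-6.4} and upgrades to $T=\infty$ via the same mechanism as Theorem \ref{Thm-1.2} part b) (i.e., \cite[Theorem 5.2, p307]{Dore:2000}). Your additional remarks on the UMD property and on the role of the interior control are consistent with the paper but not needed for this step, since they are already encoded in Theorems \ref{Thm-6.3} and \ref{Thm-6.4}.
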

	 
	 \begin{proof}[Proof of Theorem \ref{Thm-6.3}]
	 	We return to $\ds \BA_{_{F,q}}$ as given in \eqref{6.26} $\ds \BA_{_{F,q}} = \hat{\BA}_{_{F,q}} + \Pi$, where $\Pi$ is a benign operator regarding the issue of maximal $L^p$-regularity as it involves: the bounded operator $\ds J \in \calL \left( \bs{W}^q_{\sigma}(\Omega), \bls \right) $ in \eqref{6.21}, the bounded operators $\ds \calC_{\gamma} \in \calL(L^q(\Omega); \bls)$ and $\ds \calC_{\theta_e} \in \calL (\bls, L^q(\Omega))$ in \eqref{6.6}, \eqref{6.7}; the operator $A_{o,q}$ which is $\ds A^{\rfrac{1}{2}}_q$-bounded, see \eqref{5.10}; and the operator $B_{o,q}$ which is simply $B^{\rfrac{1}{2}}$-bounded, see \eqref{6.10}. Thus, it suffices (it is equivalent) to show that $\ds \hat{\BA}_{_{F,q}}$ in \eqref{6.27}, \eqref{6.28} has maximal $L^p$-regularity on $\ds \bs{W}^q_{\sigma}(\Omega)$ up to $T < \infty: \ \ds \hat{\BA}_{_{F,q}} \in MReg \left( L^p \left( 0, T; \bs{W}^q_{\sigma}(\Omega) \right) \right)$. We rewrite $\ds \hat{\BA}_{_{F,q}}$ as
	 	
	 	\begin{equation}\label{6.37}
	 		\hat{\BA}_{_{F,q}} \bs{w} = \bbm -A_q \bs{w}_1 = -A_q \bbm I_1 \\ 0  \ebm \bs{w} \\[4mm] -B_q \left( \bbm 0 \\ I_2 \ebm + DF \right) \bs{w} \ebm
	 	\end{equation}
	 	\nin with domain as in \eqref{6.28}. To this end, we cannot apply directly Theorem \ref{Thm-1.2}. Instead, we shall work with the adjoint $\ds \hat{\BA}_{_{F,q}}^*$, as in the proof of Theorem \ref{Thm-1.2}. For $\ds \bs{w} \in \bs{W}^q_{\sigma}(\Omega)$ and $\ds v_2 \in \left[ \calD(B^*_q) \right]'$, we compute the adjoint of $B_qDF:$
	 	\begin{equation}\label{6.38}
	 		\ip{B_qDF \bs{w}}{v_2}_{L^q(\Omega)} = \ip{\bs{w}}{F^*D^*B^*_qv_2}_{\bs{W}^q_{\sigma}(\Omega)}.
	 	\end{equation}
	 	\nin Thus, for $\ds \bbm v_1 \\ v_2 \ebm \in \calD \left( \hat{\BA}_{_{F,q}}^* \right)$, we have
	 	\begin{equation}\label{6.39}
	 		\hat{\BA}_{_{F,q}}^* \bbm v_1 \\ v_2 \ebm = \bbm -A^*_q & 0 \\ 0 & -B^*_q \ebm \bbm v_1 \\ v_2 \ebm + F^*D^*B^*_qv_2.
	 	\end{equation}
	 	\nin By \eqref{6.15c}, we have $\ds D^*B^{*^{\gamma}}_q \in \calL \left( L^{q'}(\Omega), L^{q'}(\Gamma) \right), \ \gamma = \rfrac{1}{2q} - \varepsilon$, and so $\ds F^*D^*B^*_q = \left( F^*D^*B^{*^{\gamma}}_q \right) B^{*^{1-\gamma}}_q$, where $\ds F^* \in \calL \left( L^{q'}(\Gamma), \bs{W}^{q'}_{\sigma}(\Omega) \right)$. Hence, for the perturbation in \eqref{6.38} we estimate
	 	\begin{align}
	 		\norm{F^*D^*B^*_q v_2} &= \norm{\left( F^*D^*B^{*^{\gamma}}_q \right) B^{*^{1-\gamma}}_q v_2} \leq C \norm{ B^{*^{1-\gamma}}_q v_2}\\
	 		&\leq C \left[ \norm{ B^{*^{1-\gamma}}_q v_2} + \norm{ A^{*^{1-\gamma}}_q v_1} \right]\\
	 		&\leq C \norm{\bbm A^*_q & 0 \\ 0 & B^*_q \ebm^{1 - \gamma} \bbm v_1 \\ v_2 \ebm}
	 	\end{align}
	 	\nin and $\ds F^*D^*B^*_q$ is $\ds \bbm A^*_q & 0 \\ 0 & B^*_q \ebm^{1 - \gamma}$-bounded, $1 - \gamma < 1$, where $\bbm A^*_q & 0 \\ 0 & B^*_q \ebm$ has maximal $L^p$-regularity on $\bs{W}^{q'}_{\sigma}(\Omega)$ up to $T < \infty$. We now proceed as in the proof of Theorem \ref{Thm-1.2}, that is, Step 3. By a known perturbation result \cite[Theorem 6.2, p 311]{Dore:2000} or \cite[Remark 1i, p 426 for $\beta = 1$]{KW:2001} we conclude from \eqref{6.39} that $\ds \hat{\BA}^*_{_{F,q}}$- and hence $\ds \BA^*_{_{F,q}}$ in \eqref{6.24} has maximal $L^p$-regularity on $\bs{W}^{q'}_{\sigma}(\Omega)$ up to $T < \infty$. We finally conclude that $\ds \BA_{_{F,q}}$ has maximal $L^p$-regularity in $\bs{W}^q_{\sigma}(\Omega)$ up to $T < \infty$ via Step 4 of the proof of Theorem \ref{Thm-1.2}, as $\bs{W}^q_{\sigma}(\Omega)$ is UMD.
	 \end{proof}
 
 	\begin{proof}[Proof of Theorem \ref{Thm-6.5}] Since, under the setting of Theorem \ref{Thm-6.4}, the s.c. analytic semigroup $\ds e^{\BA_{_{F,q}} t}$ is also uniformly stable on $\bs{W}^q_{\sigma}(\Omega)$, see \eqref{6.35}, then maximal $L^p$-regularity holds up to $T = \infty$. 
 	\end{proof}
	
	\nin \textbf{Acknowledgments} 
	\begin{enumerate}[1.]
		\item The authors wish to thank Giovanni Dore, University of Bologna, for pointing out that a family	of operators is R-bounded if and only if the dual family is R-bounded holds true in UMD spaces,	as given by reference \cite{HNVW:2016}, thus going beyond the special case in \cite{KW:2004} for $L^q(\Omega)$- spaces.
		\item The research of I. L. and R. T. was partially supported by the National Science Foundation
		under grant DMS-1713506. The research of B. P. was partially supported by the ERC advanced
		grant 668998 (OCLOC) under the EU’s H2020 research program.	
	\end{enumerate}
	 	 
	\footnotesize


\begin{thebibliography}{99}
		
		\bibitem[Adams]{A:1975}
		\newblock R. A. Adams,
		\newblock Sobolev Spaces.
		\newblock {\textit{Academic Press}, 1975. pp268}
		
		\bibitem[Ama.1]{HA:1995}
		\newblock H. Amann,
		\newblock{Linear and Quasilinear Parabolic Problems}.
		\newblock {\textit{Birkh\"{a}user}, 1995}.
		
		\bibitem[Ama.2]{HA:2000}
		\newblock H. Amann,
		\newblock{On the Strong Solvability of the Navier-Stokes Equations}.
		\newblock {\textit{J. Math. Fluid Mech. 2 }, 2000}.
		
		\bibitem[Ace]{Ac}
		\newblock P. Acevedo Tapia,
		\newblock $ L^p $- Theory for the Boussinesq system,
		\newblock Ph.D theis,
		\newblock Universidad de Chille,Faculatad de Ciencias Fisicas y Mathematicas, Departamento de Ingeniearia Mathematica, Santiago de Chille, 2015
		
		\bibitem[A-A-C.1]{AAC.1}
		\newblock P. Acevedo, C. Amrouche, C. Conca,
		\newblock Boussinesq system with non-homogeneous boundary conditions,
		\newblock Applied Mathematics Letter,
		\newblock 53(2016) 39-44
		
		\bibitem[A-A-C.2]{AAC.2}
		\newblock P. Acervedo, C. Amrouche, C. Conca,
		\newblock $ L^p$ theory for Boussinesq system with Dirichlet  boundary conditions,
		\newblock Applicable Analysis(2019),
		\newblock V.98 N.1-2, 272-294
		
		\bibitem[A-R]{AR:2010}
		\newblock C. Amrouche, M. A. Rodriguez-Bellido,
		\newblock Stationary Stokes, Oseen and Navier-Stokes equations with singular data. 
		\newblock {\textit{hal-00549166}, 2010.}
		
		
		
		
		
		\bibitem[B.1]{B:2011}
		\newblock V. Barbu,
		\newblock \textit{Stabilization of Navier–Stokes Flows}
		\newblock Springer Verlag, 2011, p 276.
		
		\bibitem[B.2]{B:2018}
		\newblock V. Barbu,
		\newblock \textit{Controllability and Stabilization of Parabolic Equations}
		\newblock Birkh\"{a}user Bessel, 2018, p 226.
		
		\bibitem[B-L]{BL:2012}
		\newblock V. Barbu, I. Lasiecka,
		\newblock The unique continuation property of eigenfunctions to Stokes–Oseen operator is generic with respect to the coefficients
		\newblock \textit{Nonlinear Analysis: Theory, Methods \& Applications}, 75(2012), pp 4384-4397.
		
		\bibitem[B-T.1]{BT:2004} 
		\newblock V. Barbu, R. Triggiani,
		\newblock Internal Stabilization of Navier-Stokes Equations with Finite-Dimensional Controllers,
		\newblock \textit{Indiana University Mathematics}, 2004, 123 pp.	
		
		\bibitem[B-L-T.1]{BLT1:2006} 
		\newblock V. Barbu, I. Lasiecka, R. Triggiani,
		\newblock Tangential Boundary Stabilization of Navier-Stokes Equations. 
		\newblock {\textit{ Memoires of American Math Society}, 2006.}
		
		\bibitem[B-L-T.2]{BLT2:2007} 
		\newblock V. Barbu, I. Lasiecka, R. Triggiani,
		\newblock{Abstract Settings for Tangential Boundary Stabilization of Navier-Stokes Equations by High- and Low-gain Feedback Controllers}. 
		\newblock {\textit{Nonlinear Analysis}, 2006}.
		
		\bibitem[B-L-T.3]{BLT3:2006} 
		\newblock V. Barbu, I. Lasiecka, R. Triggiani,
		\newblock{Local Exponential Stabilization Strategies of the Navier-Stokes Equations, d = 2,3 via Feedback Stabilization of its Linearization}. 
		\newblock {\textit{ Control of Coupled Partial Differential Equations, ISNM Vol 155, Birkhauser}, 2007, pp13-46}.
		
		
		
		
		\bibitem[C-V]{CV:1986}
		\newblock P. Cannarsa, V. Vespri,
		\newblock On Maximal $L^p$ regularity for the abstract Cauchy problem,
		\newblock Boll. Un. Mat. Ital B (6) 5 (1986) n 1, 165-175.
		
		
		
		
		
		\bibitem[C-F]{CF:1980}
		\newblock P. Constantin, C. Foias,
		\newblock Navier-Stokes Equations 
		\newblock \textit{(Chicago Lectures in Mathematics) 1st Edition}, 1980.
		
		\bibitem[DaP-G.1]{DPG:1975}
		\newblock G. DaPrato, P. Grisvard, 
		\newblock Sommes d'op\'{e}rateurs lineaires et \'{e}quations diff\'{e}rentiells op\'{e}rationnelles,
		\newblock J. Math. Pures Appl. (9) 54 (1975), 305-387.

		\bibitem[DaP-G.2]{DPG:1984}
		\newblock G. DaPrato, P. Grisvard, 
		\newblock Maximal regularity for evolution equations by interpolation and extrapolation,
		\newblock Journal of Functional Analysis, Volume 58, Issue 2, 1984, 107-124.
		
		\bibitem[DaP-V]{DPV:2002}
		\newblock G. DaPrato, V. Vespri, 
		\newblock Maximal $L^p$ regularity for elliptic equations with unbounded coefficients,
		\newblock NonLinear Analysis 49 (2002) n 6 Ser A: Theory Methods, 747-755.
		
		
		
		
		\bibitem[Dore.1]{Dore:1991}
		\newblock G. Dore,
		\newblock $L^p$ regularity for abstract differential equations,
		\newblock \textit{Functional Analysis and Related Topics}, 1991 (Kyoto), Springer-Berlin, pp 25-38.
			
		\bibitem[Dore.2]{Dore:2000}
		\newblock G. Dore,
		\newblock Maximal regularity in $L^p$ spaces for an abstract Cauchy problem,
		\newblock \textit{Advances in Differential Equations}, 2000.
		
		\bibitem[D-V]{DV:1991}
		\newblock G. Dore, A. Venni, 
		\newblock Maximal regularity for parabolic initial-boundary value problems in Sobolev Spaces,
		\newblock Math. Z. 208 (1991), 297-308.
				
		
		\bibitem[F-L]{FL:1996}
		\newblock C. Fabre and G. Lebeau,
		\newblock Prolongement unique des solutions de l'équation de Stokes
		\newblock \textit{Comm. Part. Diff. Eq.}, 21, 1996, 573-596.
		
		
		\bibitem[Fat]{Fat.1}
		\newblock H. O. Fattorini,
		\newblock The Cauchy Problem
		\newblock {\textit{Encyclopedia of Mathematics and its Applications (18), Cambridge University Press}, 1984, ISBN: 9780511662799.}
		
		\bibitem[Fri]{Fri:1976}
		\newblock A. Friedman,
		\newblock \textit{Partial Differential Equations},		
		\newblock Robert Krieger Publishing Company, Huntington, New York, 1976, 260pp.		
		
		\bibitem[Fur.1]{F.1}
		\newblock A. Fursikov,
		\newblock Real processes corresponding to the 3D Navier-Stokes system, and its feedback stabilization from the boundary
		\newblock \textit{Partial Differential Equations, Amer. Math. Soc. Transl}, Ser. 2, Vol.\,260, AMS, Providence, RI, 2002.		
		
		\bibitem[Fur.2]{F.2}
		\newblock A. Fursikov, 
		\newblock Stabilizability of two dimensional Navier--Stokes equations with help of a boundary feedback control, \newblock {\textit{J. Math. Fluid Mech.} 3 (2001), 259--301.}
		
		\bibitem[Fur.3]{F.3}
		\newblock A. Fursikov, 
		\newblock Stabilization for the 3D Navier--Stokes system by feedback boundary control,
		\newblock {\textit{DCDS} 10 (2004), 289--314.}
		
		
		\bibitem[F-T]{FT:1984}
		\newblock C. Foias, R. Temam,
		\newblock Determination of the Solution of the Navier-Stokes Equations by a Set of Nodal Volumes,
		\newblock \textit{Mathematics of Computation}, Vol 43, N 167, 1984 , pp 117-133.
		
		
		
		\bibitem[Ga.1]{Ga:2011}
		\newblock G. P. Galdi,
		\newblock An Introduction to the Mathematical Theory of the Navier-Stokes Equations.
		\newblock {\textit{Springer-Verlag New York}, 2011.}
		
		
		
		
		\bibitem[Gi.1]{Gi:1981}
		\newblock Y. Giga,
		\newblock Analyticity of the semigroup generated by the Stokes operator in $L_r$ spaces,
		\newblock \textit{Math.Z.178}(1981), n 3, pp 279-329.
		
		\bibitem[Gr]{G:1969}
		\newblock P. Grisvard, 
		\newblock Equations diff\'{e}rentielles abstraites,
		\newblock Ann. Sci. \'{E}cole Norm. Sup. (4) 2 (1969), 311-395.
		
		
		
		
		\bibitem[H-N-V-W]{HNVW:2016}
		\newblock T. Hyt\'{o}nen, J. van Neerven, M. Veraar, L. Weis,
		\newblock \textit{Analysis in Banach Spaces}, Volume 1 \& Volume 2,
		\newblock Springer, 2016.
		
		\bibitem[Kre]{Krein}
		\newblock S. G. Krein,
		\newblock \textit{Linear Equations in Banach Spaces},
		\newblock Birkh\"{a}user Basel, ISBN 978-0-8176-3101-7, 1982, pp 106.
		
		
		
		
		\bibitem[Kat]{TK:1966}
		\newblock T. Kato,
		\newblock Perturbation Theory of Linear Operators.
		\newblock {\textit{Springer-Verlag},1966.}
		
		
	
						
		\bibitem[Kim]{Kim:2012}
		\newblock H. Kim,
		\newblock The existence and uniqueness of very weak solutions of the stationary Boussinesq system,
		\newblock \textit{Nonlinear Analysis: Theory, Methods \& Applications}, Vol 75(1), 2012, p 317-330.
				
		
		
		\bibitem[K-W.1]{KW:2001}
		\newblock P. C. Kunstmann, L. Weis,
		\newblock Perturbation theorems for maximal $L^p$-regularity
		\newblock {\textit{Annali della Scuola Normale Superiore di Pisa - Classe di Scienze,} Série 4 : Volume 30 (2001) no. 2 , p. 415-435}
		
		\bibitem[K-W.2]{KW:2004}
		\newblock P. C. Kunstmann, L. Weis,
		\newblock Maximal $L^p$-regularity for Parabolic Equations, Fourier Multiplier Theorems and   $H^{\infty}$-functional Calculus
		\newblock {\textit{Functional Analytic Methods for Evolution Equations, Lecture Notes in Mathematics, vol 1855. Springer, Berlin, Heidelberg} pp 65-311}
		
		
		
		\bibitem[L-P-T.1]{LPT.1}
		\newblock I. Lasiecka, B. Priyasad, R. Triggiani,
		\newblock Uniform Stabilization of Navier–Stokes Equations in Critical $L^q$-Based Sobolev and Besov Spaces by Finite Dimensional Interior Localized Feedback Controls. Appl. Math Optim. (2019). https://doi.org/10.1007/s00245-019-09607-9

		\bibitem[L-P-T.2]{LPT.2}
		\newblock I. Lasiecka, B. Priyasad, R. Triggiani,
		\newblock Uniform stabilization of 3D Navier-Stokes equations in critical Besov spaces with finite dimensional, tangential-like boundary, localized feedback controllers, ARMA, \textit{submitted}.
		
		\bibitem[L-P-T.3]{LPT.3}
		\newblock I. Lasiecka, B. Priyasad, R. Triggiani,
		\newblock Uniform stabilization of Boussinesq systems in critical $\bs{L}^q$-based Sobolev and Besov spaces by finite dimensional interior localized feedback controls,
		\newblock \textit{Discrete \& Continuous Dynamical Systems - B}, 25, 10, 4071, 4117, 2020-6-15, 1531-3492\_2020\_10\_4071.

		\bibitem[L-P-T.4]{LPT.4}
		\newblock I. Lasiecka, B. Priyasad, R. Triggiani,
		\newblock Finite dimensional boundary uniform stabilization of the Boussinesq system in Besov spaces by critical user of Carleman estimate-based inverse theory,
		\newblock \textit{Journal of Inverse and Ill-posed Problems}, to appear.
		
		\bibitem[L-T.1]{LT:1983.1}
		\newblock I. Lasiecka, R. Triggiani,
		\newblock Stabilization and structural assignment of Dirichlet boundary feedback parabolic equations, 
		\newblock \textit{SIAM J. Control Optimiz.}, 21 (1983), 766-803.		

		\bibitem[L-T.2]{LT:1983.2}
		\newblock I. Lasiecka, R. Triggiani,
		\newblock Feedback semigroups and cosine operators for boundary feedback parabolic and hyperbolic equations, 
		\newblock \textit{J. Diff. Eqns.}, 47 (1983), 246-272.
		
		\bibitem[L-T.3]{LT:1983.3}
		\newblock I. Lasiecka, R. Triggiani,
		\newblock Stabilization of Neumann boundary feedback parabolic equations: The case of trace in the feedback loop, 
		\newblock \textit{Appl. Math. Optimiz.}, 10 (1983), 307-350.				
				
		\bibitem[L-T.4]{L-T.1}
		\newblock I. Lasiecka, R. Triggiani,
		\newblock Control Theory for Partial Differential Equations: Continuous and Approximation Theories, Vol. 1, Abstract Parabolic Systems (680 pp.), 
		\newblock {\textit{Encyclopedia of Mathematics and its Applications Series}, Cambridge University Press, January 2000.}		
		
		\bibitem[L-T.5]{LT1:2015} 
		\newblock I. Lasiecka, R. Triggiani,
		\newblock Uniform Stabilization with Arbitrary Decay Rates of the Oseen Equation by Finite-Dimensional Tangential Localized Interior and Boundary Controls. 
		\newblock {\textit{ Semigroups of Operators -Theory and Applications}, Proms 113, 2015, 125-154.}
		
		\bibitem[L-T.6]{LT2:2015} 
		\newblock I. Lasiecka, R. Triggiani,
		\newblock{Stabilization to an Equilibrium of the Navier-Stokes Equations with Tangential Action of Feedback Controllers}. 
		\newblock {\textit{Nonlinear Analysis}, 121 (2015), 424-446.}
		
		
		
		
		
		
		\bibitem[Pazy]{P:1983}
		\newblock A. Pazy,
		\newblock Semigroups of Linear Operators and Applications to Partial Differential Equations,
		\newblock {\textit{Springer-Verlag}, 1983.}
		
		
		\bibitem[P-S]{PS:2016}
		\newblock J. Pr\"{u}ss, G. Simonett,
		\newblock {\textit{Moving Interfaces and Quasilinear Parabolic Evolution Equations}}
		\newblock Birkh\"{u}user Basel, Monographs in Mathematics 105, 2016. 609pp.
		
		
		
		
		
		
		
		
		
		
		
		\bibitem[Sim]{DS:1964}
		\newblock L. De Simon,
		\newblock Un'applicazione della teoria degli integrali singolari allo studio delle equazioni differenziali lineari astratte del primo ordine,
		\newblock \textit{Rendiconti del Seminario Matematico della Universit\'{a} di Padova} (1964), Volume: 34, page 205-223.
		
		\bibitem[Sol.1]{VAS:1968}
		\newblock V. A. Solonnikov,
		\newblock \textit{Estimates of the solutions of a nonstationary linearized system of Navier-
			Stokes equations},
		\newblock A.M.S. Translations, 75 (1968), 1-116.
		
		\bibitem[Sol.2]{VAS:1977}
		\newblock V. A. Solonnikov,
		\newblock Estimates for solutions of non-stationary Navier-Stokes equations,
		\newblock \textit{ J. Sov. Math.}, 8, 1977, pp 467-529.
		
		\bibitem[Sol.3]{VAS:1981}
		\newblock V. A. Solonnikov,
		\newblock On the solvability of boundary and initial-boundary value problems for the Navier-Stokes system in domains with noncompact boundaries.
		\newblock \textit{Pacific J. Math.} 93 (1981), no. 2, 443-458. https://projecteuclid.org/euclid.pjm/1102736272.
		
		\bibitem[Sol.4]{VAS:1996} 
		\newblock V. A. Solonnikov, 
		\newblock On Schauder Estimates for the Evolution Generalized Stokes Problem. 
		\newblock \textit{Ann. Univ. Ferrara} 53, 1996, 137-172.
		
		\bibitem[Sol.5]{VAS:2001}
		\newblock V. A. Solonnikov,
		\newblock $L^p$-Estimates for Solutions to the Initial Boundary-Value Problem for the Generalized Stokes System in a Bounded Domain,
		\newblock \textit{J. Math. Sci.}, Volume 105, Issue 5, pp 2448–2484.
		
		
		
		\bibitem[Tan]{Tan:1979}
		\newblock H. Tanabe,
		\newblock \textit{Evolutions of Equations},
		\newblock Pitman-Press, London San Francisco Melbourne, 1979.
		
		
		
		\bibitem[Tr.1]{RT:1975}
		\newblock R. Triggiani,
		\newblock On the Stabilizability Problem of Banach Spaces,
		\newblock {\textit{J. Math. Anal. Appl. 52 303-403}, 1975.}
		
		\bibitem[Tr.2]{RT:1980}
		\newblock R. Triggiani,
		\newblock Well-posedness and regularity of boundary feedback systems,
		\newblock {\textit{J. Diff. Eqns.}, 36(1980), 347-362.}
		
		\bibitem[Tr.3]{RT:1980:2}
		\newblock R. Triggiani,
		\newblock Boundary feedback stabilizability of parabolic equations,
		\newblock {\textit{Appl. Math.~Optimiz.} 6 (1980), 201--220.}
		
		\bibitem[Tr.4]{RT:2008}
		\newblock R. Triggiani,
		\newblock Linear independence of boundary traces of eigenfunctions of elliptic and Stokes Operators and applications, invited paper for special issue,
		\newblock {\textit{Applicationes Mathematicae} 35(4) (2008), 481--512, Institute of Mathematics, Polish Academy of Sciences.}
		
		\bibitem[Tr.5]{RT1:2009}
		\newblock R. Triggiani,
		\newblock Unique continuation of boundary over-determined Stokes and Oseen eigenproblems,
		\newblock \textit{Discrete \& Continuous Dynamical Systems - S}, Vol. 2 , N. 3, Sept 2009, 645-677.
		
		\bibitem[Tr.6]{RT2:2009} 
		\newblock R. Triggiani,
		\newblock Unique Continuation from an Arbitrary Interior Subdomain of the Variable-Coefficient  Oseen Equation. 
		\newblock {\textit{Nonlinear Analysis Theory, Meth. \& Appl.}, (17)2009, 4967-4976.}
		
		\bibitem[Trie]{HT:1980}
		\newblock H. Triebel,
		\newblock Interpolation Theory, Function Spaces, Differential Operators.
		\newblock {\textit{Bull. Amer. Math. Soc. (N.S.) 2, no. 2, 339-345 }, 1980.}
				
		\bibitem[Ves]{V:1985}
		\newblock V. Vespri, 
		\newblock Regolarit\'{a} massimale in $L^p$ per il problema di Cauchy astratto e regolarit\'{a} $L^p(L^q)$ per operatori parabolici,
		\newblock in: L. Modica (Ed.) ``\textit{Atti del convegno su equazioni differenziali e calcolo delle variazioni}"; Pisa, 1985, 205-213.
			
		\bibitem[V-R-R]{VRR:2006}
		\newblock E. J. Villamizar-Roa, M. A. Rodríguez-Bellido, M. A. Rojas-Medar,
		\newblock The Boussinesq system with mixed nonsmooth boundary data,
		\newblock \textit{Comptes Rendus Mathematique}, Vol 343(3), 2006, 191-196.		
		
		\bibitem[Wahl]{W:1985}
		\newblock W. von Wahl,
		\newblock The Equations of Navier-Stokes and Abstract Parabolic Equations.
		\newblock {\textit{Springer Fachmedien Wiesbaden, Vieweg+Teubner Verlag}, 1985.}	
		
		
		\bibitem[Weis]{We:2001}
		\newblock L. Weis,
		\newblock A new approach to maximal Lp-regularity.
		\newblock {\textit{In Evolution Equ. and	Appl. Physical Life Sci.,} volume 215 of Lect. Notes Pure and Applied Math., pages 195–214, New York, 2001. Marcel Dekker.}
		
		
	\end{thebibliography}
\end{document}